\documentclass[a4paper]{amsart}                

\addtolength{\textwidth}{2.5cm} \addtolength{\hoffset}{-1.25cm}
\addtolength{\textheight}{2.5cm} \addtolength{\voffset}{-1.25cm}

\usepackage[latin1]{inputenc}                  
\usepackage[T1]{fontenc}                       
\usepackage[spanish,english]{babel}            
\usepackage{graphicx}                  
\usepackage{amsmath,amssymb,amsthm}            
\usepackage{latexsym}                          
\usepackage{delarray}                          
\usepackage{bbm}                               
\usepackage{hyperref}                          
\usepackage[pdftex,usenames,dvipsnames]{color} 
\usepackage{calrsfs,eufrak,mathrsfs,upgreek}   
\usepackage{bbding,trfsigns}                   
\usepackage{wasysym}                           
\usepackage{datetime}                          
\usepackage{color}                             

\DeclareMathAlphabet{\mathpzc}{OT1}{pzc}{m}{it} 

\newtheorem{teo}{Theorem}[section]
\newtheorem{propo}{Proposition}[section]
\newtheorem{coro}{Corollary}[section]
\newtheorem{lema}{Lemma}[section]
\newtheorem{remark}{Remark}[section]

\newcommand{\N}{\mathbb{N}}

\newcommand{\Z}{\mathbb{Z}}

\DeclareMathAlphabet{\mathpzc}{OT1}{pzc}{m}{it}

\DeclareFontFamily{U}{mathx}{\hyphenchar\font45}
\DeclareFontShape{U}{mathx}{m}{n}{
      <5> <6> <7> <8> <9> <10>
      <10.95> <12> <14.4> <17.28> <20.74> <24.88>
      mathx10
      }{}
\DeclareSymbolFont{mathx}{U}{mathx}{m}{n}
\DeclareFontSubstitution{U}{mathx}{m}{n}
\DeclareMathAccent{\widecheck}{0}{mathx}{"71}
\DeclareMathAccent{\wideparen}{0}{mathx}{"75}

\allowdisplaybreaks

\author[J.J. Betancor]{Jorge J. Betancor$^1$}
\author[A.J. Castro]{Alejandro J. Castro$^2$}
\author[J.C. Fari\~na]{Juan C. Fari\~na$^1$}
\author[L. Rodr\'iguez-Mesa]{L. Rodr\'iguez-Mesa$^1$}

\address{$^1$
        Departamento de An\'alisis Matem\'atico,
        Universidad de La Laguna, 
        Campus de Anchieta, Avda. Astrof\'{\i}sico Francisco S\'anchez, s/n, 
        38271, La Laguna (Sta. Cruz de Tenerife), Spain.}
\email{jbetanco@ull.es, jcfarina@ull.es, lrguez@ull.es}

\address{$^{2}$ Department of Mathematics, Uppsala University, S-751 06 Uppsala, Sweden.}
\email{alejandro.castro@math.uu.se}

\thanks{This paper is partially supported by MTM2013-44357-P. The second author is partially supported by Swedish Research Council Grant 621-2011-3629.}

\date{\today}

\begin{document}

\title[Discrete harmonic analysis associated with ultraspherical expansions]
{Discrete harmonic analysis associated with ultraspherical expansions}

\subjclass[2010]{Primary: 39A12. Secondary: 35K08, 39A14, 42A38, 42B25, 42C10}

\keywords{ultraspherical functions, maximal operators, Littlewood-Paley functions, transplantation operators, Calder\'on-Zygmund.}

\begin{abstract}
In this paper we study discrete harmonic analysis associated with ultraspherical orthogonal functions. We establish weighted $\ell ^p$-boundedness properties of maximal operators and Littlewood-Paley $g$-functions defined by Poisson and heat semigroups generated by the difference operator
$$
\Delta _\lambda f(n):=a_n^\lambda f(n+1)-2f(n)+a_{n-1}^\lambda f(n-1),\quad n\in \mathbb{N},\;\lambda >0,
$$
where $a_n^\lambda :=\{(2\lambda +n)(n+1)/[(n+\lambda )(n+1+\lambda )]\}^{1/2}$, $n\in \mathbb{N}$, and $a_{-1}^\lambda:=0$. We also prove weighted $\ell ^p$-boundedness properties of transplantation operators associated with the system $\{\varphi _n^\lambda \}_{n\in \mathbb{N}}$ of ultraspherical  functions, a family of eigenfunctions of $\Delta _\lambda$. In order to show our results we previously establish a vector-valued local Calder\'on-Zygmund theorem in our discrete setting.

\end{abstract}

\maketitle

\section{Introduction}\label{sec:intro}
The study of harmonic analysis in discrete settings has attracted considerable attention in the last years. For instance, harmonic analysis on graphs has been studied in  \cite{BM}, \cite{BR}, \cite{KM}, \cite {R1} and \cite{R2}, and it has been considered on discrete groups in \cite{DP}, \cite{D}, \cite {EOT}, \cite{GT}, \cite{K}, \cite{L2} and \cite{L1}. Also, celebrated mathematicians have investigated discrete analogues of Euclidean harmonic analysis problems where the underlying real field $\mathbb{R}$ is replaced by the ring of integers $\mathbb{Z}$. In this discrete context the exponential sums play the role of oscillatory integrals in the Euclidean setting. Some of these problems are studied in \cite{B1}, \cite{B2}, \cite{IMSW}, \cite{IW}, \cite{SW1}, \cite{SW2} and \cite{SW3}.

As far as we know Titchmarsh (\cite{Ti}) was the first one who investigated the $\ell ^p$-boundedness properties of discrete harmonic analysis operators (see also \cite{Ri}). For every $1\leq p\leq \infty $ we denote as usual by $\ell ^p(\mathbb{Z})$ the space that consists of all those complex sequences $(a_n)_{n\in \mathbb{Z}}$ such  that $\|(a_n)_{n\in \mathbb{Z}}\|_{\ell ^p(\mathbb{Z})}<\infty $, where
\begin{equation}\label{normalp}
\|(a_n)_{n\in \mathbb{Z}}\|_{\ell ^p(\mathbb{Z})}:=\left\{\begin{array}{ll}
\displaystyle \Big(\sum_{n\in \mathbb{Z}}|a_n|^p\Big)^{1/p},&1\leq p<\infty,\\
			&\\
			\displaystyle \sup_{n\in \mathbb{Z}}|a_n|,&p=\infty .
		\end{array}
\right.
\end{equation}
The convolution operation on $\mathbb{C}^\mathbb{N}$ associated to the usual group operation on $\mathbb{Z}$ is defined as follows: if $a=(a_n)_{n\in \mathbb{Z}}\in \mathbb{C}^\mathbb{Z}$, and $b=(b_n)_{n\in \mathbb{Z}}\in \mathbb{C}^\mathbb{Z}$, the convolution $a*b\in \mathbb{C}^\mathbb{Z}$ is given by
$$
(a*b)_n=\sum_{m\in \mathbb{Z}}a_mb_{n-m},\quad n\in \mathbb{Z},
$$
provided that the last sum converges for every $n\in \mathbb{Z}$. As it is wellknown,  the Young's inequality holds for $*$ in $\ell ^p(\mathbb{Z})$ spaces. The discrete Hilbert transform $\mathcal{H}_\mathbb {Z}$ was defined in \cite{Ti} as the $*$-convolution operator with the kernel $k=(k(n))_{n\in \mathbb{Z}}$, where $k(n):=(\pi (n+1/2))^{-1}$, $n\in \mathbb{Z}$, and the convergence of the series is understood as a principal value, i.e., as the limit of partial sums from -M to M. $\mathcal{H}_\mathbb{Z}$ is bounded from $\ell ^p(\mathbb{Z})$ into itself, for every $1<p<\infty$. As in the continuous case, when $p=1$ the situation is different. The discrete Hilbert transform is bounded from $\ell ^1(\mathbb{Z})$ into $\ell ^{1,\infty }(\mathbb{Z})$ where $\ell ^{1,\infty }(\mathbb{Z})$ denotes the weak-$\ell ^1$ space (see \cite[Proposition 5 and Corollary 2]{CGRTV}). As the transference results show the $L^p$-boundedness properties of continuous and their discrete analogues operators are closely connected  (see, for instance, \cite{AC} and \cite{CZ}).

The discrete Hilbert transform also adopts other forms. It is usual to consider the operator $\mathfrak{H}_\mathbb{Z}$ defined by
$$
\mathfrak{H}_\mathbb{Z}(f)(n):=\frac{1}{\pi }\sum_{m\in \mathbb{Z},m\not=n}\frac{f(m)}{n-m},\quad f\in \ell ^p(\mathbb{Z}),\;1\leq p<\infty ,
$$
(see \cite{An1}, \cite{An2} and \cite{HMW}).

The operators $\mathfrak{H}_\mathbb{Z}$ and $\mathcal{H}_\mathbb{Z}$ map $\ell ^2(\mathbb{Z})$ into itself with norm 1 (\cite{Gra} and \cite{Ti}). Laeng (\cite{La}) has investigated the norms of discrete Hilbert transforms as bounded operators in $\ell ^p(\mathbb{Z})$, $1<p<\infty $.

Recently, Ciaurri et al. (\cite{CGRTV}) studied discrete harmonic analysis operators related to the discrete Laplacian $\Delta _\mathbb{Z}$ defined by
$$
(\Delta _\mathbb{Z}f)(n):=f(n+1)-2f(n)+f(n-1),\quad n\in \Z,
$$
for every $f=(f(n))_{n\in \mathbb{Z}}\in \mathbb{C}^\mathbb{Z}$. By using heat and Poisson semigroups associated with $\Delta _\mathbb{Z}$ they defined maximal operators, fractional powers of $\Delta _\mathbb{Z}$, Littlewood-Paley $g$-functions and Riesz transforms. The discrete Hilbert transform $\mathcal{H}_\mathbb{Z}$ appears as a Riesz transform. $\ell ^p$-boundedness properties of those operators are studied in \cite{CGRTV} by employing scalar and vector-valued Calder\'on-Zygmund theory (\cite{RRT}).

Motivated by \cite{CGRTV}, in this paper we develope a discrete harmonic analysis associated with ultraspherical expansions.

Assume that $\lambda >0$. For every $n\in \mathbb{N}$ we consider the $n$-th ultraspherical polynomial of order $\lambda $ defined by (\cite[\S 4.7]{Sz})
\begin{equation}\label{ultraspherical}
\mathcal{P}_n^\lambda (x):=\frac{(-1)^n}{2^n(\lambda +1/2)_n}(1-x^2)^{1/2-\lambda }\frac{d^n}{dx^n}(1-x^2)^{n+\lambda -1/2}, \quad x\in [-1,1].
\end{equation}
Here $(a)_n=a(a+1)\cdots (a+n-1)$, for each $a>0$, $n\in \mathbb{N}$. We have that, for every $n,m\in \mathbb{N}$,
$$
\int_{-1}^1\mathcal{P}_n^\lambda (x)\mathcal{P}_m^\lambda (x)(1-x^2)^{\lambda -1/2}dx=\frac{\delta_{n,m}}{w_\lambda (n)},
$$
where $\delta _{n,m}$ represents the Kronecker's delta and
$$
w_\lambda (n):=\frac{\Gamma (\lambda )(2\lambda )_n(n+\lambda )}{\sqrt{\pi }\Gamma (\lambda +1/2)n!}.
$$
By taking into account that, for certain $C\geq 1$,
\begin{equation}\label{3.7}
\frac{1}{C}(m+1)^{\alpha -1}\leq \frac{(\alpha )_m}{m!}\leq C(m+1)^{\alpha -1},\quad \alpha >0\mbox{ and }m\in \mathbb{N},
\end{equation}
we get
\begin{equation}\label{3.8}
\frac{1}{C}(n+1)^{2\lambda}\leq w_\lambda (n)\leq C(n+1)^{2\lambda},\quad n\in \mathbb{N}.
\end{equation}

We consider for each $n\in \mathbb{N}$ the ultraspherical function $\varphi _n^\lambda $ defined by
$$
\varphi _n^\lambda (x):=\sqrt{w_\lambda(n)}\mathcal{P}_n^\lambda (x)(1-x^2)^{\lambda /2-1/4},\quad x\in (-1,1).
$$
The sequence $\{\varphi _n^\lambda \}_{n\in \mathbb{N}}$ is an orthonormal basis in $L^2(-1,1)$.

According to \cite[(4.7.17)]{Sz}, we have that
\begin{equation}\label{1.1} 2x\varphi _n^\lambda (x)=a_n^\lambda \varphi _{n+1}^\lambda (x)+a_{n-1}^\lambda \varphi _{n-1}^\lambda (x),\quad x\in (-1,1)\mbox{ and }n\in \mathbb{N},
\end{equation}
where
$$
a_n^\lambda:=\sqrt{\frac{(2\lambda +n)(n+1)}{(n+\lambda )(n+1+\lambda )}},\quad n\in \mathbb{N}.
$$
Here and in the sequel $\varphi _{-1}^\lambda :=0$ and $a_{-1}^\lambda :=0$.

We consider the $\lambda $- Laplacian operator given by 
$$
(\Delta _\lambda f)(n):=a_n^\lambda f(n+1)-2f(n)+a_{n-1}^\lambda f(n-1),\quad n\in \mathbb{N},\;f=(f(n))_{n\in \mathbb{N}}\in \mathbb{C}^\mathbb{N}.
$$
Note that $\Delta _\lambda $ reduces to the discrete Laplacian $\Delta$ on $\mathbb{N}$ in the end point $\lambda =0$.

For every $1\leq p\leq \infty $ we denote by $\ell ^p(\mathbb{N})$ the space constituted by all those complex sequences $(a_n)_{n\in \mathbb{N}}$ such that
$\|(a_n)_{n\in \mathbb{N}}\|_{\ell ^p(\mathbb{N})}<\infty $, where $\|\cdot\|_{\ell ^p(\mathbb{N})}$ is naturally defined as in \eqref{normalp} with $\mathbb{Z}$ replaced by $\mathbb{N}$.

The operator $\Delta _\lambda $ is selfadjoint in $\ell ^2(\mathbb{N})$ and bounded in $\ell ^p(\mathbb{N})$, for every $1\leq p\leq \infty$.

We define the $\lambda$-transform $\mathcal{F}_\lambda (f)$ of $f=(f(n))_{n\in \mathbb{N}}\in \ell ^2(\mathbb{N})$, by
$$
\mathcal{F}_\lambda (f)=\sum_{n=0}^\infty f(n)\varphi _n^\lambda .
$$
Parseval's equality says that $\mathcal{F}_\lambda$ is an isometry from $\ell^2(\mathbb{N})$ into $L^2(-1,1)$.

From (\ref{1.1}) we deduce that, for every $f\in \ell ^2(\mathbb{N})$,
\begin{equation}\label{1.19}
\mathcal{F}_\lambda(\Delta _\lambda f)(x)=2(x-1)\mathcal{F}_\lambda(f)(x),\quad x\in (-1,1).
\end{equation}
By using again Parseval's equality we obtain
$$
\sum_{n=0}^\infty (\Delta _\lambda f)(n)\overline{f(n)}=\int_{-1}^12(x-1)|\mathcal{F}_\lambda(f)(x)|^2dx\leq 0,\quad f=(f(n))_{n\in \mathbb{N}}\in \ell ^2(\mathbb{N}).
$$
Thus, we show that $-\Delta _\lambda $ is a positive operator in $\ell ^2(\mathbb{N})$.

Since $\Delta _\lambda $ is bounded in $\ell ^p (\mathbb{N})$, $1\leq p\leq \infty$,  we have that $\Delta _\lambda$ generates a semigroup of operators $\{W_t^\lambda :=e^{t\Delta _\lambda }\}_{t>0}$ in $\ell ^p(\mathbb{N})$, $1\leq p\leq \infty$, such that
$$
\lim_{t\rightarrow 0^+}e^{t\Delta _\lambda }f=f,\quad \mbox{ for every }f\in \ell ^p(\mathbb{N}).
$$

We can obtain an expression for $W_t^\lambda $, $t>0$, in terms of a convolution operation $\# _\lambda$ that is well adapted to our discrete ultraspherical setting. This $\# _\lambda$ convolution is a modification of the one introduced in \cite{Hi1}.

If $f=(f(m))_{m\in \mathbb{N}}\in \mathbb{C}^\mathbb{N}$ and $n\in \mathbb{N}$ the $\lambda$-translated $_\lambda \tau _nf$ is defined by
\begin{equation}\label{1.3}
(_\lambda \tau _nf)(m):=\sum_{k=0}^\infty c_\lambda (n,m,k)f(k),\quad m\in \mathbb{N},
\end{equation}
where, for every $m,k\in \mathbb{N}$,
$$
c_\lambda (n,m,k):=\int_{-1}^1\varphi _n^\lambda (x)\varphi _m^\lambda (x)\varphi _k^\lambda (x)(1-x^2)^{1/4-\lambda /2}dx.
$$
According to \cite[(1.1)]{Hsu} we have that
\begin{equation}\label{clambda}
c_\lambda (n,m,k)=\sqrt{w_\lambda (n)w_\lambda (m)w_\lambda (k)}\frac{n!m!k!}{(2\lambda )_n(2\lambda )_m(2\lambda )_k}\frac{(\lambda )_{\sigma -n}(\lambda )_{\sigma -m}(\lambda )_{\sigma -k}}{(\sigma -n)!(\sigma -m)!(\sigma -k)!}\frac{2^{1-2\lambda}\pi \Gamma (\sigma +2\lambda )}{\Gamma (\lambda )\Gamma (\sigma +\lambda +1)},
\end{equation}
when $n,m,k\in \mathbb{N}$, $|n-m|\leq k\leq n+m$ and $n+m+k=2\sigma $, for some $\sigma \in \mathbb{N}$. Otherwise $c_\lambda (n,m,k)=0$. Note that the series in (\ref{1.3}) is actually a finite sum.

Moreover, by using (\ref{3.7}) and (\ref{3.8}) we have that, there exists $C>0$ such that, for each $n,m,k\in \mathbb{N}$, such that $n+m+k=2\sigma$, with $\sigma \in \mathbb{N}$, and $|n-m|\leq k\leq n+m$, 
\begin{align}\label{acotclambda}
\frac{1}{C}\left(\frac{\sigma (\sigma -n+1)(\sigma -m+1)(\sigma -k+1)}{(n+1)(m+1)(k+1)}\right)^{\lambda -1}&\nonumber\\
&\hspace{-3cm}\leq c_\lambda (n,m,k)\leq C\left(\frac{\sigma (\sigma -n+1)(\sigma -m+1)(\sigma -k+1)}{(n+1)(m+1)(k+1)}\right)^{\lambda -1}.
\end{align}

We remark that there is not a group operation $\circ$ on $\mathbb{N}$ such that $(_\lambda \tau _nf)(m)=f(n\circ m^{-1})$, for every $n,m\in \mathbb{N}$, where $m^{-1}$ represents the inverse of $m$ with respect to $\circ$.

If $f=(f(n))_{n\in \mathbb{N}}\in \mathbb{C}^\mathbb{N}$ and $g=(g(n))_{n\in \mathbb{N}}\in \mathbb{C}^\mathbb{N}$, the $\# _\lambda $-convolution $f\#_\lambda g$ of $f$ and $g$ is defined by
$$(f\#_\lambda g)(n):=\sum_{m=0}^\infty f(m)(_\lambda \tau _ng)(m),\quad n\in \mathbb{N},
$$
provided that the last series converges. 

The triple $(\mathbb{N},\mu ,\#_\lambda )$, is an hypergroup (\cite{BH}), where $\mu$ is the measure $\sum_{n\in \mathbb{N}}\delta _n$, and $\delta _n$, $n\in \mathbb{N}$, is the point mass probability measure supported on $n$. From \cite[(11)]{Hi1} we deduce that
\begin{equation}\label{1.4}
\mathcal{F}_\lambda (f\#_\lambda g)(x)=(1-x^2)^{-\lambda /2+1/4}\mathcal{F}_\lambda (f)(x)\mathcal{F}_\lambda (g)(x),\quad x\in (-1,1),
\end{equation}
for every $f,g\in \mathbb{C}^\mathbb{N}$ such that $\sqrt{w_\lambda }f\in \ell ^1(\mathbb{N})$ and $\sqrt{w_\lambda }g\in \ell ^1(\mathbb{N})$.

Analytic continuation and \cite[10.9 (38)]{EMOT} allow us to write, for every $n\in \mathbb{N}$,
\begin{equation}\label{1.2}
\int_{-1}^1e^{tx}\varphi _n^\lambda (x)(1-x^2)^{\lambda /2-1/4}dx=2^\lambda \sqrt{\pi }\Gamma \Big(\lambda +\frac{1}{2}\Big)\sqrt{w_\lambda (n)}t^{-\lambda }I_{\lambda +n}(t),\quad t>0.
\end{equation}
Here, $I_\nu$ denotes the modified Bessel function of the first kind and order $\nu$. By taking into account (\ref{1.19}) and (\ref{1.2}) we obtain that, for every $t>0$,
\begin{equation}\label{5prime}
W_t^\lambda (f)=h_t^\lambda \#_\lambda f,\quad f\in \ell ^p(\mathbb{N}),\;1\leq p\leq \infty ,
\end{equation}
where
\begin{equation}\label{hlambda}
h_t^\lambda (n):=\sqrt{\pi }\Gamma \Big(\lambda +\frac{1}{2}\Big)\sqrt{w_\lambda (n)}e^{-2t}t^{-\lambda }I_{\lambda +n}(2t),\quad n\in \mathbb{N}.
\end{equation}

By using the subordination formula, the Poisson semigroup $\{P_t^\lambda \}_{t>0}$ associated with $\Delta _\lambda $ (gene\-rated by $-\sqrt{-\Delta _\lambda}$) is defined by
\begin{equation}\label{1.5}
P_t^\lambda (f)(n):=\frac{1}{\sqrt{\pi}}\int_0^\infty \frac{e^{-u}}{\sqrt{u}}W_{t^2/(4u)}^\lambda (f)(n)du,\quad n\in \mathbb{N},\;f\in \ell ^p(\mathbb{N}),\mbox{ and }1\leq p\leq \infty .
\end{equation}
The semigroups $\{W_t^\lambda \}_{t>0}$ and $\{P_t^\lambda \}_{t>0}$ are not Markovian, that is, they do not map  constants into constants (see Section \ref{S3}).

From now on, let $\{T_t^\lambda \}_{t>0}$ represents the heat or the Poisson semigroup associated with $\Delta _\lambda$. If $k\in \mathbb{N}\setminus\{0\}$, we define the Littlewood-Paley $g_{T^\lambda }^k$ of $k$-th order by
$$
g_{T^\lambda }^k(f)(n):=\left(\int_0^\infty |t^k\partial _t^k(T_t^\lambda f)(n)|^2\frac{dt}{t}\right)^{1/2},\quad n\in \mathbb{N},
$$
and the maximal operator $T_*^\lambda $ by
$$
T_*^\lambda f:=\sup_{t>0}|T_t^\lambda f|.
$$

Suppose that $w=(w(n))_{n\in \mathbb{N}}\in (0,\infty )^\mathbb{N}$. We say that $w\in A_p(\mathbb{N})$ provided that
$$
\sup_{\substack{0\leq n\leq m\\n,m\in \mathbb{N}}}\frac{1}{(m-n+1)^p}\left(\sum_{k=n}^mw(k)\right)\left (\sum_{k=n}^mw(k)^{-1/(p-1)}\right)^{p-1}<\infty ,\quad \mbox{ when }1<p<\infty ,
$$
and
$$
\sup _{\substack{0\leq n\leq m\\n,m\in \mathbb{N}}}\frac{1}{m-n+1}\left(\sum_{k=n}^mw(k)\right)\max _{n\leq k \leq m}\frac{1}{w(k)}<\infty ,\quad \mbox{ when }p=1.
$$

For every $1\leq p<\infty$ and $w\in A_p(\mathbb{N})$ we denote by $\ell^p(\mathbb{N},w)$ and $\ell ^{p,\infty }(\mathbb{N},w)$ the usual weighted and weak weighted $\ell ^p$ space, respectively.

By using vector-valued local Calder\'on-Zygmund theory, we establish the $\ell ^p$-boundedness pro\-per\-ties for our discrete $g$-functions and maximal operators.

\begin{teo}\label{Th1}
Let $\lambda >0$ and $k\in \mathbb{N}\setminus\{0\}$. Then,

(i) If $1<p<\infty $ and $w\in A_p(\mathbb{N})$, $g_{T^\lambda}^k$ and $T_*^\lambda $ are bounded from $\ell ^p(\mathbb{N},w)$ into itself.

(ii) If $w\in A_1(\mathbb{N})$, $g_{W^\lambda }^1$, $g_{P^\lambda }^k$ and $T_*^\lambda $ are bounded from $\ell ^1(\mathbb{N}, w)$ into $\ell ^{1,\infty }(\mathbb{N}, w)$.
\end{teo}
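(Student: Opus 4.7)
The plan is to recast both $g_{T^\lambda}^k$ and $T_*^\lambda$ as vector-valued singular integrals and appeal to the vector-valued local Calderón-Zygmund theorem established earlier in the paper. From (\ref{5prime}) we can write, for $n\in\mathbb{N}$,
$$
T_t^\lambda f(n)=\sum_{m=0}^\infty K_t^\lambda(n,m)f(m),\qquad K_t^\lambda(n,m):=(_\lambda\tau_n k_t)(m),
$$
with $k_t=h_t^\lambda$ in the heat case and $k_t$ the corresponding subordinated Poisson kernel obtained from (\ref{1.5}). Choosing the Banach space $\mathbb{B}=L^2((0,\infty),dt/t)$ for $g_{T^\lambda}^k$ and $\mathbb{B}=L^\infty((0,\infty),dt)$ for $T_*^\lambda$, these operators are exactly $\mathbb{B}$-valued operators with kernel
$$
\mathcal{K}(n,m):=\bigl(t^k\partial_t^k K_t^\lambda(n,m)\bigr)_{t>0}\quad\text{or}\quad \mathcal{K}(n,m):=\bigl(K_t^\lambda(n,m)\bigr)_{t>0},
$$
so the theorem reduces to checking the hypotheses of the vector-valued local CZ theorem for $\mathcal{K}$.

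First I would establish the $\ell^2(\mathbb{N})$-boundedness into $\ell^2(\mathbb{N},\mathbb{B})$. Using (\ref{1.19}), the spectral parameter of $-\Delta_\lambda$ corresponds via $\mathcal{F}_\lambda$ to $2(1-x)$, so that $\mathcal{F}_\lambda(W_t^\lambda f)(x)=e^{-2t(1-x)}\mathcal{F}_\lambda f(x)$ and, via subordination, $\mathcal{F}_\lambda(P_t^\lambda f)(x)=e^{-t\sqrt{2(1-x)}}\mathcal{F}_\lambda f(x)$. Parseval then reduces the $g$-function bound to the elementary uniform estimate
$$
\sup_{x\in(-1,1)}\int_0^\infty\bigl|t^k\partial_t^k m_t(x)\bigr|^2\frac{dt}{t}<\infty,
$$
for $m_t(x)=e^{-2t(1-x)}$ or $m_t(x)=e^{-t\sqrt{2(1-x)}}$, which is standard. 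For $T_*^\lambda$ the $\ell^2$-bound follows from general semigroup/subordination arguments together with (\ref{hlambda}) ensuring the heat kernel is positive.

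The core of the work is the kernel analysis, and this is the main obstacle. One must prove the size estimate $\|\mathcal{K}(n,m)\|_\mathbb{B}\le C/|n-m|$ and a Hörmander-type smoothness condition in the first and second variable, uniformly in the appropriate local scale. Inserting (\ref{hlambda}) in the explicit expansion
$$
K_t^\lambda(n,m)=\sum_{\substack{|n-m|\le k\le n+m\\ n+m+k\,\text{even}}}c_\lambda(n,m,k)\,k_t(k),
$$
and using the two-sided bounds (\ref{acotclambda}) for $c_\lambda(n,m,k)$ together with the standard asymptotics of the Bessel functions $I_{\lambda+k}(2t)$ (small $t$, large $t$, and large order regimes), one estimates the integral in $t$ by splitting the sum over $k$ into the regimes near the endpoints $|n-m|$ and $n+m$ and the bulk. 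Carrying the $t$-derivatives inside and controlling differences $K_t^\lambda(n,m)-K_t^\lambda(n\pm1,m)$ with the same techniques yields the Hörmander conditions. The delicate point is that $_\lambda\tau_n$ is not a group translation, so $\mathcal{K}(n,m)$ does not depend on $n-m$ alone and every estimate must be carried out with the two indices free; moreover, the restriction to $k=1$ for the heat $g$-function in part (ii) reflects the loss of cancellation produced by repeated $t$-derivatives of $e^{-2t(1-x)}$, for which the first-variable Hörmander condition in $\mathbb{B}=L^2(dt/t)$ can be pushed through only for $k=1$.

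Once the size and smoothness estimates are secured, the vector-valued local Calderón-Zygmund theorem of the paper immediately delivers both the strong $\ell^p(\mathbb{N},w)$-bound for $1<p<\infty$ and $w\in A_p(\mathbb{N})$ (part (i)) and the weak $\ell^{1,\infty}(\mathbb{N},w)$-bound for $w\in A_1(\mathbb{N})$ (part (ii)), completing the proof.
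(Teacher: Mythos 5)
Your overall strategy coincides with the paper's for $T_*^\lambda$, $g_{W^\lambda}^1$ and $g_{P^\lambda}^k$: view them as $\mathbb{B}$-valued operators with kernel $_\lambda\tau_n(k_t)(m)$, $\mathbb{B}=L^\infty(0,\infty)$ or $L^2((0,\infty),dt/t)$, get the $\ell^2$ bound by Plancherel/spectral calculus (and, for the maximal operator, a Stein-type argument), verify the local size and H\"ormander conditions through the bounds on $c_\lambda(n,m,k)$ and the Bessel integral representations, and invoke the vector-valued local Calder\'on--Zygmund theorem (Theorem \ref{Th2.1}).

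There is, however, a genuine gap for $g_{W^\lambda}^k$ with $k\geq 2$ in part (i). You propose to handle all $k$ by ``carrying the $t$-derivatives inside'' and estimating the kernel $\bigl(t^k\partial_t^k\,{}_\lambda\tau_n(h_t^\lambda)(m)\bigr)_{t>0}$ directly, but at the same time you concede that the H\"ormander condition can be pushed through only for $k=1$ in the heat case. If the Calder\'on--Zygmund hypotheses cannot be verified for $k>1$, your argument yields neither the weak $(1,1)$ bound (which is indeed absent from part (ii)) nor the strong $\ell^p(\mathbb{N},w)$ bound that part (i) does assert for $g_{W^\lambda}^k$, $k>1$; you offer no substitute. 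The paper faces exactly this obstruction (no treatable expression for $\partial_t^k h_t^\lambda$, $k>1$) and circumvents it by a different mechanism: an inductive reduction based on Krivine's theorem (Theorem \ref{Th4.1}), using the identity $\|G_\lambda^{k+1}(f)(n;\cdot)\|_{\mathbb{B}}=\sqrt{2k(2k+1)}\,\|\widetilde{G_\lambda^1}[G_\lambda^k(f)](n,\cdot;\cdot)\|_{L^2_{\mathbb{B}}((0,\infty),d\theta/\theta)}$ to bootstrap the weighted $\ell^p$ bound from $k$ to $k+1$; this gives the strong type for all $k$ but not the weak type, which is precisely why part (ii) lists only $g_{W^\lambda}^1$ for the heat semigroup. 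Your diagnosis of the $k=1$ restriction (a cancellation loss in $e^{-2t(1-x)}$) also misses the point: the $\ell^2$ multiplier estimate works for every $k$; the obstruction is at the kernel level. Two smaller sketch-level issues: the $\ell^2$ bound for $W_*^\lambda$ does not follow from ``general semigroup arguments'' since the semigroup is not Markovian --- the paper uses $W_*^\lambda\leq M^\lambda+g_{W^\lambda}^1$ with the Hopf--Dunford--Schwartz theorem for $M^\lambda$ --- and one must still identify the Calder\'on--Zygmund extension with the pointwise-defined operator on all of $\ell^p(\mathbb{N},w)$ (the paper's $P_{t,n}$-functional and a.e.\ convergence arguments), a step your proposal omits.
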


Let $1\leq p\leq \infty $. Since $\Delta _\lambda $ is bounded in $\ell ^p(\mathbb{N})$, for every $f\in \ell ^p(\mathbb{N})$, $f=\lim_{t\rightarrow 0^+}W_t^\lambda f$ in $\ell ^p(\mathbb{N})$. Hence, for every $f\in \ell ^p(\mathbb{N})$,
$$
f(n)=\lim_{t\rightarrow 0^+}(W_t^\lambda f)(n),\quad n\in \mathbb{N}.
$$
Subordination formula (\ref{1.4}) allows us to obtain the same convergence properties for the Poisson semigroup $\{P_t^\lambda \}_{t>0}$.

From Theorem \ref{Th1} and by using density arguments we get the following.

\begin{coro}\label{Cor1}
Let $\lambda >0$, $1\leq p< \infty $ and $w\in A_p(\mathbb{N})$. Then, for every $f\in \ell ^p(\mathbb{N},w)$,
$$
f(n)=\lim_{t\rightarrow 0^+}(T_t^\lambda f)(n),\quad n\in \mathbb{N}.
$$
Moreover, if $1<p<\infty$, $f=\lim_{t\rightarrow 0^+}T_t^\lambda f$, in $\ell ^p(\mathbb{N},w)$.
\end{coro}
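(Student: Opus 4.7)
The plan is to derive the corollary from the maximal inequalities in Theorem \ref{Th1} by the classical density argument. First I would observe that since each $w(n)$ is finite, the truncations $f\mathbbm{1}_{\{0,1,\ldots,N\}}$ converge to $f$ in $\ell^p(\mathbb{N},w)$, so the finitely supported sequences form a dense subclass. Every such finitely supported $g$ lies in $\ell^p(\mathbb{N})$, and the paragraph preceding the corollary has already established $\lim_{t\to 0^+}(T_t^\lambda g)(n)=g(n)$ for every $n\in\mathbb{N}$.

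For the pointwise assertion I would fix $n\in\mathbb{N}$, take $f\in\ell^p(\mathbb{N},w)$, and pick finitely supported $f_j$ with $\|f-f_j\|_{\ell^p(\mathbb{N},w)}\to 0$. The decomposition
$$|T_t^\lambda f(n)-f(n)|\le |T_t^\lambda f_j(n)-f_j(n)|+T_*^\lambda(f-f_j)(n)+|(f-f_j)(n)|$$
reduces matters to showing that, at the fixed $n$, both $T_*^\lambda(f-f_j)(n)$ and $|(f-f_j)(n)|$ tend to $0$ as $j\to\infty$. The second is immediate from $|h(n)|\le w(n)^{-1/p}\|h\|_{\ell^p(\mathbb{N},w)}$. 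For the first, Theorem \ref{Th1} furnishes, for each fixed $n$, a constant $C_n$ with $T_*^\lambda g(n)\le C_n\|g\|_{\ell^p(\mathbb{N},w)}$ for every $g\in\ell^p(\mathbb{N},w)$: when $1<p<\infty$ this follows at once from $w(n)|T_*^\lambda g(n)|^p\le\|T_*^\lambda g\|_{\ell^p(\mathbb{N},w)}^p$, while when $p=1$ the weak-type bound at any level $\alpha<T_*^\lambda g(n)$ yields $w(n)\le C\alpha^{-1}\|g\|_{\ell^1(\mathbb{N},w)}$, whence $T_*^\lambda g(n)\le Cw(n)^{-1}\|g\|_{\ell^1(\mathbb{N},w)}$. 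Taking $\limsup_{t\to 0^+}$ and then letting $j\to\infty$ closes the pointwise part.

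For the norm statement when $1<p<\infty$, Theorem \ref{Th1}(i) puts $T_*^\lambda f$ in $\ell^p(\mathbb{N},w)$, so
$$|T_t^\lambda f(n)-f(n)|^p w(n)\le 2^p\bigl[(T_*^\lambda f(n))^p+|f(n)|^p\bigr]w(n),$$
a function summable in $n$. Combined with the pointwise limit established above, the dominated convergence theorem for counting measure on $\mathbb{N}$ yields $\|T_t^\lambda f-f\|_{\ell^p(\mathbb{N},w)}\to 0$. I do not expect any serious obstacle; the only mildly delicate point is extracting pointwise control of $T_*^\lambda$ from the weak-type inequality in the case $p=1$ as sketched above, and this is also the reason norm convergence is restricted to $p>1$.
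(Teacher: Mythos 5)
Your proposal is correct and is exactly the argument the paper intends: the paper derives Corollary \ref{Cor1} from Theorem \ref{Th1} "by using density arguments," i.e.\ density of finitely supported sequences in $\ell^p(\mathbb{N},w)$, the pointwise convergence already established for $\ell^p(\mathbb{N})$ data, the standard three-term splitting controlled by $T_*^\lambda$ (with the weak $(1,1)$ bound giving the pointwise estimate when $p=1$, just as you do), and dominated convergence via $T_*^\lambda f\in\ell^p(\mathbb{N},w)$ for the norm statement when $1<p<\infty$. Nothing further is needed.
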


As a consequence of Theorem \ref{Th1} we can see that the Littlewood-Paley functions $g_{W^\lambda }^k$ and $g_{P^\lambda }^k$, $k\in \mathbb{N}\setminus\{0\}$, define equivalent norms in $\ell ^p(\mathbb{N},w)$, $1<p<\infty$ and $w\in A_p(\mathbb{N})$.

\begin{propo}\label{Prop1}
Let $\lambda >0$, $k\in \mathbb{N}\setminus\{0\}$, $1< p< \infty $ and $w\in A_p(\mathbb{N})$.  Then, there exists $C>0$ such that, for every $f\in \ell ^p(\mathbb{N},w)$,
\begin{equation}\label{acotg}
\frac{1}{C}\|f\|_{\ell ^p(\mathbb{N},w)}\leq \|g_{T^\lambda }^k(f)\|_{\ell ^p(\mathbb{N},w)}\leq C\|f\|_{\ell ^p(\mathbb{N},w)}.
\end{equation}
\end{propo}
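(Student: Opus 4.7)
The upper bound in \eqref{acotg} is immediate from Theorem~\ref{Th1}(i), so the whole content of the proposition is the reverse inequality. My plan is the classical duality/polarization argument.

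First I would establish a polarization identity on $\ell^2(\mathbb{N})$. Since $\{\varphi_n^\lambda\}_{n\in\mathbb{N}}$ is an orthonormal basis of $L^2(-1,1)$, $\mathcal{F}_\lambda$ is unitary, and \eqref{1.19} shows that $-\Delta_\lambda$ corresponds via $\mathcal{F}_\lambda$ to multiplication by $2(1-x)$ on $(-1,1)$. Hence, for the heat semigroup, $\mathcal{F}_\lambda(\partial_t^k W_t^\lambda f)(x)=(2(x-1))^k e^{2t(x-1)}\mathcal{F}_\lambda f(x)$; integrating in $t$ with weight $t^{2k-1}$ and using
$$
\int_0^\infty t^{2k-1}(2(1-x))^{2k}e^{-4t(1-x)}\,dt=\frac{\Gamma(2k)}{2^{2k}},\qquad x\in(-1,1),
$$
Plancherel yields $\|g_{W^\lambda}^k f\|_{\ell^2(\mathbb{N})}^2=(\Gamma(2k)/2^{2k})\|f\|_{\ell^2(\mathbb{N})}^2$. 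An entirely analogous computation handles the Poisson case via $e^{-t\sqrt{2(1-x)}}$. Polarizing this norm identity gives
$$
\langle f,g\rangle_{\ell^2(\mathbb{N})}=\frac{2^{2k}}{\Gamma(2k)}\int_0^\infty t^{2k-1}\,\langle \partial_t^k T_t^\lambda f,\partial_t^k T_t^\lambda g\rangle_{\ell^2(\mathbb{N})}\,dt,
$$
valid for $f,g\in\ell^2(\mathbb{N})$; the Fubini step needed for the $\|\cdot\|_{\ell^2}$ calculation justifies it.

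Next I would set up duality. Let $p'=p/(p-1)$ and $w'=w^{-1/(p-1)}$. A standard check (identical to the classical one on $\mathbb{Z}$, using only the definition in the excerpt) shows $w\in A_p(\mathbb{N})$ if and only if $w'\in A_{p'}(\mathbb{N})$, and the canonical pairing identifies $\ell^p(\mathbb{N},w)^*$ with $\ell^{p'}(\mathbb{N},w')$. For $f$ finitely supported (such $f$ are dense in $\ell^p(\mathbb{N},w)$ and lie in $\ell^2$), the polarization identity applied to $f$ and to a finitely supported $g$ gives, after Cauchy--Schwarz in $t$ inside the sum over $n$ and H\"older in $n$,
$$
|\langle f,g\rangle|\leq\frac{2^{2k}}{\Gamma(2k)}\sum_{n\in\mathbb{N}} g_{T^\lambda}^k(f)(n)\,g_{T^\lambda}^k(g)(n)\leq C\|g_{T^\lambda}^k(f)\|_{\ell^p(\mathbb{N},w)}\,\|g_{T^\lambda}^k(g)\|_{\ell^{p'}(\mathbb{N},w')}.
$$
Applying Theorem~\ref{Th1}(i) with the exponent $p'$ and the weight $w'\in A_{p'}(\mathbb{N})$ bounds the last factor by $C\|g\|_{\ell^{p'}(\mathbb{N},w')}$, and taking the supremum over $g$ with $\|g\|_{\ell^{p'}(\mathbb{N},w')}\leq1$ gives $\|f\|_{\ell^p(\mathbb{N},w)}\leq C\|g_{T^\lambda}^k(f)\|_{\ell^p(\mathbb{N},w)}$ on the dense class, and then for all $f\in\ell^p(\mathbb{N},w)$ by approximation.

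The only real obstacle is verifying the polarization identity rigorously: one must justify the exchange of the $t$-integral and the $n$-sum (or, in spectral language, the Fubini between the spectral measure and $t$), but for $f,g\in\ell^2(\mathbb{N})$ all integrands are non-negative or absolutely integrable once the kernel $t^{2k-1}e^{-ct(1-x)}$ is in hand, so this is only a bookkeeping step. Everything else is a standard duality argument made legal by Theorem~\ref{Th1}(i) and the $A_p$/$A_{p'}$ symmetry.
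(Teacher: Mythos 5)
Your proposal is correct and follows essentially the same route as the paper: the authors also prove a polarization identity $\sum_n\int_0^\infty t^k\partial_t^kT_t^\lambda f\,\overline{t^k\partial_t^kT_t^\lambda g}\,\frac{dt}{t}=\frac{\Gamma(2k)}{2^{2k}}\sum_n f\overline{g}$ via $\mathcal{F}_\lambda$ and Plancherel on finitely supported sequences, extend it by density using Theorem \ref{Th1}, and then conclude by duality between $\ell^p(\mathbb{N},w)$ and $\ell^{p'}(\mathbb{N},w^{-p'/p})$ together with the upper bound for the exponent $p'$. The only cosmetic difference is that you extend the final inequality (rather than the polarization identity itself) from the dense class, which is equally valid.
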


In \cite{AW} Askey and Wainger established a transplantation theorem for ultraspherical coefficients (see \cite[Theorem 1]{AW}). They obtained weighted $\ell ^p$ inequalities with power $A_p(\mathbb{N})$-weights for $1<p<\infty$. By using discrete local Calder\'on-Zygmund theory we extend \cite[Theorem 1]{AW} to general $A_p(\mathbb{N})$-weights for $1<p<\infty$ and also we get results for $p=1$.

Suppose that $\lambda , \mu >0$. We consider the operator $\mathcal{T}_{\lambda , \mu }$ defined by $\mathcal{T}_{\lambda ,\mu }:=\mathcal{F}_\mu ^{-1}\mathcal{F}_\lambda$  on $\ell ^2(\mathbb{N})$.

$\ell ^p$-boundedness properties of the operator $\mathcal{T}_{\lambda , \mu }$ are established in the following.

\begin{teo}\label{Th2}
Let $\lambda ,\mu >1$. If $1< p< \infty $ and $w\in A_p(\mathbb{N})$, then there exits $C>0$ such that
$$
\frac{1}{C}\|f\|_{\ell ^p(\mathbb{N},w)}\leq \|\mathcal{T}_{\lambda ,\mu }(f)\|_{\ell ^p(\mathbb{N},w)}\leq C\|f\|_{\ell ^p(\mathbb{N},w)},\quad f\in \ell ^p(\mathbb{N},w).
$$
If $(\mu -1)/2<\lambda <\mu$ and $w\in A_1(\mathbb{N})$, the operator $\mathcal{T}_{\lambda , \mu }$ is bounded from $\ell ^1(\mathbb{N},w)$ into $\ell ^{1,\infty }(\mathbb{N},w)$.
\end{teo}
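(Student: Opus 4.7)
The strategy is to realize $\mathcal{T}_{\lambda,\mu}$ as a kernel operator and apply the discrete vector-valued local Calder\'on--Zygmund theorem developed earlier in the paper. Since $\mathcal{F}_\lambda,\mathcal{F}_\mu$ are isometries from $\ell^2(\mathbb{N})$ onto $L^2(-1,1)$ and $\{\varphi_m^\lambda\}_{m\in\mathbb{N}}$ is an orthonormal basis, for $f\in\ell^2(\mathbb{N})$ of finite support one has
$$
\mathcal{T}_{\lambda,\mu}(f)(n)=\sum_{m=0}^\infty K_{\lambda,\mu}(n,m)\,f(m),\qquad K_{\lambda,\mu}(n,m):=\int_{-1}^1\varphi_n^\mu(x)\,\varphi_m^\lambda(x)\,dx.
$$
This realization already gives that $\mathcal{T}_{\lambda,\mu}$ is unitary on $\ell^2(\mathbb{N})$ and that $\mathcal{T}_{\mu,\lambda}\circ\mathcal{T}_{\lambda,\mu}=I$, so the $\ell^2$-inequalities are immediate.

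The technical core is to verify the discrete local Calder\'on--Zygmund estimates
$$
|K_{\lambda,\mu}(n,m)|\le \frac{C}{|n-m|},\qquad |K_{\lambda,\mu}(n\pm 1,m)-K_{\lambda,\mu}(n,m)|+|K_{\lambda,\mu}(n,m\pm 1)-K_{\lambda,\mu}(n,m)|\le \frac{C}{|n-m|^2},
$$
for $n,m$ in the appropriate local region of $\mathbb{N}\times\mathbb{N}$. I would split the defining integral into a central piece on $[-1+\eta,1-\eta]$ and two endpoint pieces near $\pm 1$. On the central piece, the substitution $x=\cos\theta$ combined with the classical Darboux asymptotic $\sqrt{w_\lambda(n)}\,\mathcal{P}_n^\lambda(\cos\theta)(\sin\theta)^\lambda\sim c_\lambda\cos((n+\lambda)\theta+\phi_\lambda)$ reduces the matter to oscillatory integrals with phases $(n-m)\theta$ and $(n+m+\lambda+\mu)\theta$; integration by parts in $\theta$ yields the required $1/|n-m|$ and $1/|n-m|^2$ decay. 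The endpoint pieces are handled by the pointwise bound $|\varphi_n^\lambda(x)|\le C(n+1)^{\lambda-1/2}(1-x^2)^{\lambda/2-1/4}$, a consequence of \eqref{3.8} and Mehler--Heine type estimates for $\mathcal{P}_n^\lambda$; the hypothesis $\lambda,\mu>1$ is what makes these endpoint contributions decay fast enough in both indices.

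With the kernel bounds secured, the local Calder\'on--Zygmund theorem directly yields $\|\mathcal{T}_{\lambda,\mu}f\|_{\ell^p(\mathbb{N},w)}\le C\|f\|_{\ell^p(\mathbb{N},w)}$ for $1<p<\infty$ and $w\in A_p(\mathbb{N})$. The matching lower bound follows for free by applying the upper bound to $\mathcal{T}_{\mu,\lambda}$ and using $\mathcal{T}_{\mu,\lambda}\mathcal{T}_{\lambda,\mu}=I$. For the weak-type endpoint $p=1$, the same machinery applies, but now the endpoint contribution near $x=1$ needs finer control: the product $(1-x^2)^{\lambda/2-1/4}(1-x^2)^{\mu/2-1/4}$ against the Mehler--Heine factors must produce a kernel bound whose resulting sum is absolutely summable in the free index, and the asymmetric condition $(\mu-1)/2<\lambda<\mu$ is precisely what balances the two weights to achieve this.

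The main obstacle throughout is the endpoint analysis of $K_{\lambda,\mu}$: inside $(-1,1)$ oscillation delivers the cancellation that turns the integral into a Calder\'on--Zygmund-type kernel, but near $\pm 1$ the ultraspherical functions behave like polynomial bumps of growing degree, and one has to match their vanishing orders against the target decay in $|n-m|$ uniformly in both indices, keeping precise track of how $\lambda$ and $\mu$ enter the admissible range of parameters.
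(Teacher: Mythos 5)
Your overall strategy (kernel realization plus the discrete local Calder\'on--Zygmund theorem, lower bound from $\mathcal{T}_{\mu,\lambda}\mathcal{T}_{\lambda,\mu}=I$) is the right one, but as written it has two genuine gaps. First, a parity obstruction: since $\varphi_k^\gamma$ has the parity of $k$, the kernel $K_{\lambda,\mu}(n,m)=\int_{-1}^1\varphi_n^\mu\varphi_m^\lambda\,dx$ vanishes whenever $n$ and $m$ have different parity. Hence the first-difference bound you claim, $|K_{\lambda,\mu}(n+1,m)-K_{\lambda,\mu}(n,m)|\le C|n-m|^{-2}$, is false: for $n,m$ of equal parity the term $K_{\lambda,\mu}(n,m)$ is generically of size comparable to $|n-m|^{-1}$ while $K_{\lambda,\mu}(n+1,m)=0$, so the difference is only $O(|n-m|^{-1})$. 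This is precisely why the paper splits the operator into even and odd parts $\mathcal{T}_{\lambda,\mu}^{\rm e}$, $\mathcal{T}_{\lambda,\mu}^{\rm o}$ with kernels $K_{\lambda,\mu}(2n,2m)$ and $K_{\lambda,\mu}(2n+1,2m+1)$; the smoothness estimates \eqref{5.2} and \eqref{5.3} are proved only for these reduced kernels, Theorem \ref{Th2.1} is applied to each piece, and the two halves are reassembled. Without this (or an equivalent device) your regularity verification cannot go through.

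Second, the parameter range. Theorem \ref{Th2.1} needs the size bound $|K_{\lambda,\mu}(n,m)|\le C/|n-m|$ for \emph{all} $n\ne m$, not only in the local region $m/2\le n\le 3m/2$ (the global part of the operator is controlled by Hardy operators through this bound). In the regimes $n\le m/2$ and $n\ge 3m/2$ this decay is not a consequence of $\lambda,\mu>1$: it is exactly where the asymmetric condition $(\mu-1)/2<\lambda<\mu$ enters, via the Askey--Wainger endpoint estimates (see \eqref{5.1} and Remark \ref{conditions}), and it can fail, e.g.\ when $\lambda\ge\mu$. So your claim that ``$\lambda,\mu>1$ makes the endpoint contributions decay fast enough in both indices'' is not justified, and the direct application of the CZ theorem for all $\lambda,\mu>1$ does not work. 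The paper applies the CZ machinery only when $(\mu-1)/2<\lambda<\mu$, and then reaches the full range $\lambda,\mu>1$ for $1<p<\infty$ by factoring $\mathcal{T}_{\lambda,\mu}=\mathcal{T}_{\lambda+r,\mu}\circ\cdots\circ\mathcal{T}_{\lambda+1,\lambda+2}\circ\mathcal{T}_{\lambda,\lambda+1}$ into unit steps (each satisfying the asymmetric condition) together with a duality argument giving the boundedness of $\mathcal{T}_{\mu,\lambda}$; this composition--duality step is missing from your proposal, and it is also what legitimizes your use of $\mathcal{T}_{\mu,\lambda}\mathcal{T}_{\lambda,\mu}=I$ for the lower bound. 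It likewise explains why the weak $(1,1)$ conclusion is stated only for $(\mu-1)/2<\lambda<\mu$: weak-type bounds do not compose, so the endpoint case cannot be extended by this factorization.
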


$\mathcal{T}_{\lambda , \mu}$ is really a transplantation operator and Theorem \ref{Th2} extends \cite[Theorem 1]{AW}. Indeed, let $1<q<\infty$, $v$ a weight in the Muckenhoupt class $A_q(-1,1)$  and $F\in L^q((-1,1),v)$. According to \cite[Theorem 2]{Ke} (see also the proof of \cite[Proposition 2.2]{ABCSS}), we have that
$$
S_n^\lambda (F)=\sum_{k=0}^nc_k^\lambda (F)\varphi _k^\lambda \longrightarrow F,\quad \mbox{ as }n\rightarrow \infty ,
$$
in $L^q((-1,1),v)$. Here, for every $k\in \mathbb{N}$,
$$
c_k^\lambda (F):=\int_{-1}^1F(x)\varphi _k^\lambda (x)dx.
$$
Since $|\mathcal{P}_k^\gamma (x)|\leq 1$, $k\in \mathbb{N}$ and $\gamma >0$, (\cite[Theorem 7.33.1]{Sz}), 
 it follows that
$$
c_m^\mu (F)=\lim_{n\rightarrow \infty }\int_{-1}^1S_n^\lambda (F)(x)\varphi _m^\mu (x)dx=\lim_{n\rightarrow \infty }\sum_{k=0}^nc_k^\lambda (F)\int_{-1}^1\varphi _k^\lambda (x)\varphi _m^\mu (x)dx=\mathcal{T}_{\lambda , \mu }(f)(m),\quad m\in \mathbb{N},
$$
where $f(k)=c_k^\lambda (F)$, $k\in \mathbb{N}$.

From Theorem \ref{Th2} we deduce the following generalization of \cite[Theorem 1]{AW}.
\begin{coro}\label{Coro2}
Let $\lambda ,\mu >1$. Assume that $1< p,q< \infty $, $w\in A_p(\mathbb{N})$ and $v\in A_q(-1,1)$. Then, there exits $C>0$ such that, for every $F\in L^q((-1,1),v)$,
$$
\frac{1}{C}\|(c_k^\lambda (F))_{k\in \mathbb{N}}\|_{\ell ^p(\mathbb{N},w)}\leq \|(c_k^\mu (F))_{k\in \mathbb{N}}\|_{\ell ^p(\mathbb{N},w)}\leq C\|(c_k^\lambda (F))_{k\in \mathbb{N}}\|_{\ell ^p(\mathbb{N},w)}.
$$
\end{coro}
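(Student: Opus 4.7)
The plan is to read the corollary off directly from Theorem \ref{Th2}, using the identification
\begin{equation*}
c_m^\mu(F)=\mathcal{T}_{\lambda,\mu}(f)(m),\qquad m\in\mathbb{N},
\end{equation*}
with $f(k):=c_k^\lambda(F)$, which is already carried out in the paragraph immediately preceding the statement. Recall that this identity is obtained from the convergence $S_n^\lambda(F)\to F$ in $L^q((-1,1),v)$ (valid because $v\in A_q(-1,1)$ and $\lambda>0$, via \cite[Theorem 2]{Ke}) combined with the uniform bound $|\mathcal{P}_k^\gamma|\leq 1$ on $[-1,1]$, which legitimises pairing the expansion termwise with $\varphi_m^\mu$.

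Having that identification, the right-hand inequality of Corollary \ref{Coro2} is simply the upper bound in Theorem \ref{Th2} applied to $f$:
\begin{equation*}
\|(c_m^\mu(F))_m\|_{\ell^p(\mathbb{N},w)}=\|\mathcal{T}_{\lambda,\mu}f\|_{\ell^p(\mathbb{N},w)}\leq C\|f\|_{\ell^p(\mathbb{N},w)}=C\|(c_k^\lambda(F))_k\|_{\ell^p(\mathbb{N},w)},
\end{equation*}
which is informative when $f\in\ell^p(\mathbb{N},w)$ and trivially holds otherwise. For the left-hand inequality I see two equivalent routes: either invoke the lower-bound half of Theorem \ref{Th2}, namely $\|f\|_{\ell^p(\mathbb{N},w)}\leq C\|\mathcal{T}_{\lambda,\mu}f\|_{\ell^p(\mathbb{N},w)}$; or, more symmetrically, swap the roles of $\lambda$ and $\mu$ in the identification to obtain $c_m^\lambda(F)=\mathcal{T}_{\mu,\lambda}(g)(m)$ with $g(k):=c_k^\mu(F)$, and then apply the upper-bound half of Theorem \ref{Th2} to $\mathcal{T}_{\mu,\lambda}$. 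Both readings are available because the hypothesis $\lambda,\mu>1$ covers the two orderings symmetrically.

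I do not anticipate any real obstacle: the analytical content has been absorbed into Theorem \ref{Th2} (the local vector-valued Calder\'on-Zygmund machinery) and into the weighted $L^q$ convergence of ultraspherical Fourier partial sums already cited above. What remains is a routine transfer from the sequence-space norm equivalence to the coefficient inequality stated, with the degenerate case in which one side is infinite handled at once by the fact that finiteness on one side forces finiteness on the other through the two-sided bound of Theorem \ref{Th2}.
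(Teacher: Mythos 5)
Your proposal is correct and is essentially the paper's own argument: the paper proves the identification $c_m^\mu(F)=\mathcal{T}_{\lambda,\mu}\bigl((c_k^\lambda(F))_k\bigr)(m)$ in the paragraph preceding the corollary (via the weighted $L^q$ convergence of $S_n^\lambda(F)$ and $|\mathcal{P}_k^\gamma|\leq 1$) and then deduces the statement directly from the two-sided bound in Theorem \ref{Th2}, exactly as you do. Your remark on the symmetric role of $\lambda$ and $\mu$ and the degenerate infinite case is a harmless elaboration of the same reasoning.
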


The transplantation operator can also be seen as an extension of Riesz transform operators (see, for instance, \cite{Stem}).

In Sections \ref{S3}, \ref{S4} and \ref{S5} we present proofs of our results. In Section \ref{S2} we establish a discrete vector-valued local Calder\'on-Zygmund theorem that will be very useful to prove Theorems \ref{Th1} and \ref{Th2}.

Throughout this paper by $C$ we always denote a positive constant that can change in each ocurrence.

\section{Discrete vector-valued local Calder\'on-Zygmund operators}\label{S2}

Nowak and Stempak \cite{NS} developed the so called local Calder\'on-Zygmund theory that allows us to treat singular integrals on $(0,\infty )$. They used it to obtain $L^p$-boundedness properties for transplantation operators in the Bessel settings (\cite[Proposition 4.2]{NS}). Banach valued singular integral operators were investigated by Rubio de Francia, Ruiz and Torrea \cite{RRT} (see also \cite{BCP}). Recently, Grafakos, Liu and Yang \cite{GLY} have established  a Banach valued version of Calder\'on-Zygmund theory for singular integrals on spaces of homogeneous type.

In order to show Theorem \ref{Th1} we need to establish the following result that is a local version of \cite[Theorem 1.1]{GLY} for Banach valued Calder\'on-Zygmund operators on the space $(\mathbb{N},\mu ,|\cdot |)$ of homogeneous type. Here, $\mu$ as above denotes the measure $\mu =\sum_{n\in \mathbb{N}}\delta _n$ on $\mathbb{N}$, and $|\cdot |$ represents the usual metric on $\mathbb{N}$. Next result can also be seen as a discrete version of \cite[Proposition 4.2]{NS}.

Suppose that $\mathbb{B}_1$ and $\mathbb{B}_2$ are Banach spaces. By $\mathcal{L}(\mathbb{B}_1,\mathbb{B}_2)$ we denote the space of bounded linear operators from $\mathbb{B}_1$ into $\mathbb{B}_2$. Assume also that the function
$$
K:(\mathbb{N}\times \mathbb{N})\setminus D\longrightarrow \mathcal{L}(\mathbb{B}_1,\mathbb{B}_2),
$$
where $D:=\{(n,n):n\in \mathbb{N}\}$, is measurable, and that for certain $C>0$ the following conditions are satisfied, for each $n,m,\ell \in \mathbb{N}$, $n\not =m$:

$(a)$ the size condition:
$$
\|K(n,m)\|_{\mathcal{L}(\mathbb{B}_1,\mathbb{B}_2)}\leq \frac{C}{|n-m|},
$$

$(b)$ the regularity properties:

$(b1)$ $\displaystyle \|K(n,m)-K(\ell ,m)\|_{\mathcal{L}(\mathbb{B}_1,\mathbb{B}_2)}\leq C\frac{|n-\ell|}{|n-m|^2},\quad |n-m|>2|n-\ell |,\;\frac{m}{2}<n,\ell <\frac{3m}{2}$.

$(b2)$ $\displaystyle \|K(m,n)-K(m,\ell )\|_{\mathcal{L}(\mathbb{B}_1,\mathbb{B}_2)}\leq C\frac{|n-\ell|}{|n-m|^2},\quad |n-m|>2|n-\ell |,\;\frac{m}{2}<n,\ell <\frac{3m}{2}$.

We say that $K$ is a local $\mathcal{L}(\mathbb{B}_1,\mathbb{B}_2)$-standard kernel when the above conditions are satisfied.

Here and in the sequel, if $\mathbb{B}$ is a Banach space we denote by $\ell _\mathbb{B}^p(\mathbb{N})$ the $\mathbb{B}$-valued $\ell ^p(\mathbb{N})$ space, for every $1\leq p\leq \infty$. Also, by $\mathbb{B}_0^\mathbb{N}$ we represent the space of $\mathbb{B}$-valued sequences $f=(f(n))_{n\in \mathbb{N}}$ such that $f(n)=0$, $n>k$, for some $k\in \mathbb{N}$.

\begin{teo}\label{Th2.1}
Let $\mathbb{B}_1$ and $\mathbb{B}_2$ be Banach spaces. Suppose that $T$ is a linear and bounded operator from $\ell ^r_{\mathbb{B}_1}(\mathbb{N})$ into $\ell ^r_{\mathbb{B}_2}(\mathbb{N})$, for some $1<r<\infty$, and such that there exists a local $\mathcal{L}(\mathbb{B}_1,\mathbb{B}_2)$-standard kernel $K$ such that, for every sequence $f\in (\mathbb{B}_1)_0^\mathbb{N}$,
$$
T(f)(n)=\sum_{m\in \N}K(n,m)f(m),
$$
for every $n\in \mathbb{N}$ such that $f(n)=0$. Then,

(i) for every $1<p<\infty$ and $w\in A_p(\mathbb{N})$ the operator $T$ can be extended from $\ell ^r_{\mathbb{B}_1}(\mathbb{N})\bigcap \ell ^p_{\mathbb{B}_1}(\mathbb{N},w)$ to $\ell ^p_{\mathbb{B}_1}(\mathbb{N},w)$ as a bounded operator from $\ell ^p_{\mathbb{B}_1}(\mathbb{N},w)$ into $\ell ^p_{\mathbb{B}_2}(\mathbb{N},w)$.

(ii) for every $w\in A_1(\mathbb{N})$, the operator $T$ can be extended from $\ell ^r_{\mathbb{B}_1}(\mathbb{N})\bigcap \ell ^1_{\mathbb{B}_1}(\mathbb{N},w)$ to $\ell ^1_{\mathbb{B}_1}(\mathbb{N},w)$ as a bounded operator from $\ell ^1_{\mathbb{B}_1}(\mathbb{N},w)$ into $\ell ^{1,\infty }_{\mathbb{B}_2}(\mathbb{N},w)$.
\end{teo}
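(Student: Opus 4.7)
The plan is to adapt the standard vector-valued Calder\'on--Zygmund machinery on the homogeneous type space $(\mathbb{N},\mu,|\cdot|)$, as in \cite{GLY}, to the local setting studied by Nowak and Stempak \cite{NS}. I would first establish the unweighted weak type $(1,1)$ estimate from the given $\ell^r$-boundedness and the local kernel conditions, and then pass to the weighted bounds either by Marcinkiewicz interpolation and duality followed by Rubio de Francia extrapolation, or by repeating the Calder\'on--Zygmund argument directly with respect to the weighted measure $w\,d\mu$. Duality applies because the adjoint kernel $K^*(n,m):=K(m,n)^*$ satisfies (a)--(b2) with the roles of (b1) and (b2) exchanged.

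For the weak type $(1,1)$ estimate, given $f\in\ell^1_{\mathbb{B}_1}(\mathbb{N})$ and $\alpha>0$, perform the discrete Calder\'on--Zygmund decomposition at height $\alpha$: write $f=g+\sum_j b_j$, with $\|g\|_{\ell^\infty_{\mathbb{B}_1}}\leq C\alpha$, and each $b_j$ supported on a disjoint cube (interval) $Q_j\subset\mathbb{N}$ centered at $x_j$, satisfying $\sum_{m\in Q_j}b_j(m)=0$, $\|b_j\|_{\ell^1_{\mathbb{B}_1}}\leq C\alpha|Q_j|$, $\|b_j\|_{\ell^r_{\mathbb{B}_1}}\leq C\alpha|Q_j|^{1/r}$, and $\sum_j|Q_j|\leq C\|f\|_{\ell^1_{\mathbb{B}_1}}/\alpha$. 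The good part is dispatched by the $\ell^r$-hypothesis and Chebyshev. For the bad part, the novelty is to split, for each cube $Q_j$, the indices $n\notin\widetilde Q_j$ (with $\widetilde Q_j$ a mild enlargement of $Q_j$) into two regimes according to whether the kernel regularity applies. In the \emph{local regime} $n\in L(Q_j):=\{k\in\mathbb{N}:2x_j/3<k<3x_j/2\}\setminus\widetilde Q_j$, one checks that $x_j,m\in(n/2,3n/2)$ for every $m\in Q_j$, so that (b2) is available; the classical cancellation argument using $\sum_m b_j(m)=0$ yields
\[\|Tb_j(n)\|_{\mathbb{B}_2}\leq C\sum_{m\in Q_j}\frac{|m-x_j|}{|n-x_j|^2}\|b_j(m)\|_{\mathbb{B}_1},\]
and summing over $n\in L(Q_j)$ produces $\leq C\|b_j\|_{\ell^1_{\mathbb{B}_1}}$, which aggregates through Chebyshev and $\sum_j$ to $C\|f\|_{\ell^1_{\mathbb{B}_1}}/\alpha$. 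In the \emph{global regime} $n\notin L(Q_j)\cup\widetilde Q_j$, (b2) is not at our disposal (since $x_j\notin(n/2,3n/2)$) and the bare size bound $|K(n,m)|\leq C/|n-m|$ is not summable in $n$; here I would use the $\ell^r$-hypothesis via Chebyshev to obtain
\[\mu\big(\{n:\|Tb_j(n)\|_{\mathbb{B}_2}>\alpha/2\}\big)\leq(2/\alpha)^r\|Tb_j\|_{\ell^r_{\mathbb{B}_2}}^r\leq C\alpha^{-r}\|b_j\|_{\ell^r_{\mathbb{B}_1}}^r\leq C|Q_j|,\]
which again sums in $j$ to $\leq C\|f\|_{\ell^1_{\mathbb{B}_1}}/\alpha$.

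The extension to weighted $\ell^p(\mathbb{N},w)$ for $1<p<\infty$ and $w\in A_p(\mathbb{N})$ is then obtained by Marcinkiewicz interpolation between the weak $(1,1)$ and the hypothesis at $r$ (giving unweighted $\ell^p$ for $1<p\leq r$), extended by duality to all $1<p<\infty$, and finally by Rubio de Francia extrapolation. The $A_1$-weighted weak $(1,1)$ is obtained by rerunning the Calder\'on--Zygmund decomposition above with respect to the weighted measure $w\,d\mu$, exploiting the standard $A_1$-properties $w(\widetilde Q)\leq Cw(Q)$ and $\alpha w(Q)\lesssim\int_Q\|f\|_{\mathbb{B}_1}\,w\,d\mu$.

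The main obstacle is the handling of the global regime, where kernel regularity fails. In classical Calder\'on--Zygmund theory, regularity is always available beyond the enlarged cube, and its cancellation beats the non-summable size estimate. Here the regularity range is restricted to $n\in(2x_j/3,3x_j/2)$, and for $n$ outside this range one must instead rely on the a priori $\ell^r$-boundedness hypothesis, which supplies precisely the decay needed to compensate. A secondary technicality is the $A_1$-weighted weak $(1,1)$, which does not come for free from extrapolation and requires a direct weighted Calder\'on--Zygmund argument in $(\mathbb{N},\mu,|\cdot|)$.
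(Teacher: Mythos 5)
Your local/global splitting is in the right spirit, but the treatment of the global regime contains a genuine gap. After the Calder\'on--Zygmund decomposition you must estimate the measure of $\{n\notin\bigcup_j\widetilde Q_j:\;\|\sum_j Tb_j(n)\|_{\mathbb{B}_2}>\alpha/2\}$, and in the global regime you bound each piece separately by Chebyshev, $\mu(\{n:\|Tb_j(n)\|_{\mathbb{B}_2}>\alpha/2\})\leq C|Q_j|$, and then sum in $j$. This does not control the exceptional set of the \emph{sum}: that set is not contained in the union of the per-$j$ exceptional sets, and there is no bounded-overlap substitute, because a fixed $n$ lies in the global regime of every cube $Q_j$ with $x_j>3n/2$ (and of every distant cube with $x_j<2n/3$), so arbitrarily many $j$ can contribute at a single $n$, each contribution staying below $\alpha/2$. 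As you yourself note, the size bound alone is not summable in $n$ there, so no $\ell^1$-type aggregation is available either. The paper avoids this by removing the global part at the level of the operator, not cube by cube: with $W_n=[n/2,3n/2]\cap\N$ one sets $T_{\rm glob}(f)(n):=T(\chi_{\N\setminus W_n}f)(n)$ (the kernel representation applies since $\chi_{\N\setminus W_n}f$ vanishes at $n$), and the size condition gives the pointwise bound $\|T_{\rm glob}(f)(n)\|_{\mathbb{B}_2}\leq C\bigl(H_0(\|f\|_{\mathbb{B}_1})(n)+H_\infty(\|f\|_{\mathbb{B}_1})(n)\bigr)$ in terms of the discrete Hardy operators, whose $\ell^p(\N,w)$-boundedness for $w\in A_p(\N)$ and weak type $(1,1)$ for $w\in A_1(\N)$ are known; this disposes of the global regime for all $f$ and all admissible weights simultaneously.

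The passage to weights is also not justified as written. Rubio de Francia extrapolation needs a \emph{weighted} estimate as input (one exponent, all weights in the corresponding class); unweighted $\ell^p$ bounds plus an unweighted weak $(1,1)$ estimate do not produce $A_p$-weighted bounds by extrapolation, and rerunning the Calder\'on--Zygmund decomposition with respect to $w\,d\mu$ does not mesh with kernel hypotheses formulated in the unweighted metric without substantial extra work. The paper instead shows that the truncated kernel $\widetilde K(n,m)=\chi_{W_n}(m)K(n,m)$ of $T_{\rm loc}=T-T_{\rm glob}$ satisfies the H\"ormander-type conditions \eqref{2.1} and \eqref{2.2}, controlled by the Hardy--Littlewood maximal function, then invokes the vector-valued Calder\'on--Zygmund theorem on spaces of homogeneous type (\cite[Theorem 1.1]{GLY}) for the unweighted estimates and adapts the weighted machinery of \cite{Duo} to reach the $A_p$ and $A_1$ conclusions; this is the mechanism your sketch is missing. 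A smaller point: in your local regime you assert $x_j,m\in(n/2,3n/2)$ for all $m\in Q_j$, which can fail when $|Q_j|$ is comparable to $x_j$; this is fixable (choose $\widetilde Q_j$ large enough that the local regime is empty for such cubes), but it must be addressed.
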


\begin{proof}
For every $n\in \mathbb{N}$, we define $W_n:=\{m\in \mathbb{N}:n/2\leq m\leq 3n/2\}$ and the operators
$$
T_{\rm glob}(f)(n):=T(\chi _{\mathbb{N}\setminus W_n}f)(n),\quad n\in \N,
$$
and
$$
T_{\rm loc}(f):=T(f)-T_{\rm glob}(f),
$$
for every $f=(f(n))_{n\in \mathbb{N}}\in (\mathbb{B}_1)_0^\mathbb{N}$.

Let $f\in (\mathbb{B}_1)_0^\mathbb{N}$. Since $\chi _{\mathbb{N}\setminus W_n}(n)=0$, $n\in \mathbb{N}$, we can write
$$
T_{\rm glob}(f)(n)=\sum_{m\in \mathbb{N}\setminus W_n}K(n,m)f(m),\quad n\in \mathbb{N}.
$$
According to condition $(a)$ for $K$ we get
\begin{align*}
\|T_{\rm glob}(f)(n)\|_{\mathbb{B}_2}&\leq C\sum _{m\in \mathbb{N}\setminus W_n}\frac{\|f(m)\|_{\mathbb{B}_1}}{|n-m|}\leq C\left(\frac{1}{n}\sum_{m\in \mathbb{N}, m<n/2}
\|f(m)\|_{\mathbb{B}_1}+\sum _{m\in \mathbb{N}, m>3n/2}\frac{\|f(m)\|_{\mathbb{B}_1}}{m}\right)\\
&\leq C\Big(H_0(\|f\|_{\mathbb{B}_1})(n)+H_\infty (\|f\|_{\mathbb{B}_1})(n)\Big),\quad n\in \mathbb{N},
\end{align*}
where $\|f\|_{\mathbb{B}_1}:=(\|f(m)\|_{\mathbb{B}_1})_{m\in \mathbb{N}}$, and $H_0,$ and $H_\infty$ are the discrete Hardy operators given by
$$
H_0(g)(n):=\frac{1}{n}\sum_{m=0}^ng(m),\quad n\in \mathbb{N}\setminus\{0\},$$
and
$$
H_\infty (g)(n):=\sum_{m=n}^\infty \frac{g(m)}{m},\quad n\in \mathbb{N}\setminus\{0\},
$$
with $g=(g(m))_{m\in \mathbb{N}}\in \mathbb{C}^\mathbb{N}$.
Wellknown $\ell ^p$-boundedness properties for discrete Hardy operators (\cite{AM}) allow us to conclude that if $1<p<\infty $ and $w\in A_p(\mathbb{N})$, $T_{\rm glob}$ can be extended to  $\ell _{\mathbb{B}_1}^p(\mathbb{N},w)$ as a bounded operator from $\ell _{\mathbb{B}_1}^p(\mathbb{N},w)$ into $\ell _{\mathbb{B}_2}^p(\mathbb{N},w)$ and if $w\in A_1(\mathbb{N})$, $T_{\rm glob}$ can be extended to $\ell _{\mathbb{B}_1}^1(\mathbb{N},w)$ as a bounded operator from $\ell _{\mathbb{B}_1}^1(\mathbb{N},w)$ into $\ell _{\mathbb{B}_2}^{1,\infty }(\mathbb{N},w)$.

We now study the operator $T_{\rm loc}$. If $f=(f(n))_{n\in \mathbb{N}}\in (\mathbb{B}_1)_0^\mathbb{N}$, we can write
$$
T_{\rm loc}(f)(n)=\sum_{m\in W_n}K(n,m)f(m),\quad n\in \mathbb{N}\mbox{ such that }f(n)=0.
$$
We define the function $\widetilde{K}$ by
$$
\widetilde{K}(n,m):=\chi _{W_n}(m)K(n,m),\quad n,m\in \mathbb{N},\;n\not =m.
$$
We are going to see that $\widetilde{K}$ satisfies certain H\"ormander type conditions that can be seen as discrete analogues of \cite[(4.4) and (4.5)]{NS}. 

If $a,b\in \mathbb{N}$, $a<b$, and $I$ is the interval in $\mathbb{N}$ given by $I:=[a,b]\cap \mathbb{N}$, we denote by $2I$ the set 
$$
2I:=\Big[a-\frac{b-a}{2}, b+\frac{b-a}{2}\Big]\cap \mathbb{N}.
$$ 
By $\mathcal{M}$ we represent the noncentered Hardy-Littlewood maximal function on $\mathbb{N}$ defined as follows: for every $g=(g(m))_{m\in \mathbb{N}}\in \mathbb{C}^\mathbb{N}$,
$$
\mathcal{M}(g)(n):=\sup_{\substack{I{\rm interval}\\n\in I}}\frac{1}{\# (I)}\sum _{m\in I} g(m),\quad n\in \mathbb{N},
$$
where $\#(I)$ denotes the cardinal of $I$. 

We assert that the function $\widetilde{K}$ satisfies the following H\"ormander conditions: there exists $C>0$ such that, for every interval $I$ in $\mathbb{N}$ and $f=(f(m))_{m\in \mathbb{N}}\in\mathbb{B}_1^\mathbb{N}$,
\begin{equation}\label{2.1}
\sum_{m\in \mathbb{N}\setminus 2I}\|\widetilde{K}(n,m)-\widetilde{K}(\ell,m)\|_{\mathcal{L}(\mathbb{B}_1,\mathbb{B}_2)}{\|f(m)\|_{\mathbb{B}_1}}\leq C\mathcal{M}(\|f\|_{\mathbb{B}_1})(n),\quad n,\ell\in I,
\end{equation}
and
\begin{equation}\label{2.2}
\sum_{m\in \mathbb{N}\setminus 2I}\|\widetilde{K}(m,n)-\widetilde{K}(m,\ell)\|_{\mathcal{L}(\mathbb{B}_1,\mathbb{B}_2)}{\|f(m)\|_{\mathbb{B}_1}}\leq C\mathcal{M}(\|f\|_{\mathbb{B}_1})(n),\quad n,\ell\in I.
\end{equation}

Since (\ref{2.1}) and (\ref{2.2}) can be proved similarly, we only show (\ref{2.1}). Let $a,b\in \mathbb{N}$, $a<b$, $I:=[a,b]\cap \mathbb{N}$ and $f=(f(m))_{m\in \mathbb{N}}\in \mathbb{B}_1^\mathbb{N}$. Suppose that $n,\ell\in I$, and $n<\ell$. 

First we observe that when $m\in \mathbb{N}\setminus 2I$, then, 
\begin{equation}\label{nml}
\frac{|m-n|}{3}\leq |m-\ell|\leq 3|m-n|.
\end{equation}
To see this estimate, let us write $2I=[A,B]\cap \mathbb{N}$ and $L=(b-a)/2$, and take $m\in \mathbb{N}\setminus 2I$. In the case that $m>B$, we get
$$
m-n>m-\ell >m-b=m-B+L=m-B+\frac{B-a}{3}\geq \frac{m-a}{3}\geq\frac{m-n}{3}.
$$
Similarly, when $m<A$, we obtain that
$$
\ell-m>n-m>L+A-m=\frac{b-A}{3}+A-m\geq \frac{b-m}{3}\geq \frac{\ell -m}{3},
$$ 
and \eqref{nml} is established.

We can decompose the left hand side in (\ref{2.1}) as follows:
\begin{align}\label{D1}
\sum_{m\in \mathbb{N}\setminus 2I}\|\widetilde{K}(n,m)-\widetilde{K}(\ell,m)\|_{\mathcal{L}(\mathbb{B}_1,\mathbb{B}_2)}{\|f(m)\|_{\mathbb{B}_1}}\nonumber&
=\sum_{\substack{m\in \mathbb{N}\setminus 2I\\m\in W_n\cap W_\ell}}\|K(n,m)-K(\ell,m)\|_{\mathcal{L}(\mathbb{B}_1,\mathbb{B}_2)}{\|f(m)\|_{\mathbb{B}_1}}\\
&\hspace{-4cm}+\sum_{\substack{m\in \mathbb{N}\setminus 2I\\m\in W_n\setminus W_\ell}}\|K(n,m)\|_{\mathcal{L}(\mathbb{B}_1,\mathbb{B}_2)}{\|f(m)\|_{\mathbb{B}_1}}
+\sum_{\substack{m\in \mathbb{N}\setminus 2I\\m\in W_\ell\setminus W_n}}\|K(\ell,m)\|_{\mathcal{L}(\mathbb{B}_1,\mathbb{B}_2)}{\|f(m)\|_{\mathbb{B}_1}}\nonumber\\
&\hspace{-4.3cm}:=S_1(n,\ell)+S_2(n,\ell)+S_3(n,\ell).
\end{align}
By taking into account the size condition of $K$ and (\ref{nml}) we have that
$$
S_2(n,\ell)+S_3(n,\ell)\leq C\Big(\sum_{\substack{m\in \mathbb{N}\setminus 2I\\m\in W_n\setminus W_\ell}}+
\sum_{\substack{m\in \mathbb{N}\setminus 2I\\m\in W_\ell\setminus W_n}}\Big)\frac{\|f(m)\|_{\mathbb{B}_1}}{|n-m|}.
$$

Assume now that $3n<\ell$. In this case we have that $W_n\cap W_\ell=\emptyset$ and then $S_1(n,\ell )=0$. Also, since for $m\in \mathbb{N}\setminus 2I$, $|m-n|>(b-a)/2\geq (\ell -n)/2\geq \ell /3>n$, we get
\begin{align*}
S_2(n,\ell)+S_3(n,\ell)&\leq C\left(\frac{1}{n}\sum_{m\in W_n}\|f(m)\|_{\mathbb{B}_1}+\frac{1}{\ell }\sum_{m\in W_\ell}\|f(m)\|_{\mathbb{B}_1}\right)\\
&\leq C\left(\frac{1}{n}\sum_{m\in W_n}\|f(m)\|_{\mathbb{B}_1}+\frac{1}{\ell }\sum_{m\in J}\|f(m)\|_{\mathbb{B}_1}\right),
\end{align*}
where $J:=[n,3\ell /2]\cap \mathbb{N}$. By considering that $3\ell /2\geq 3\ell /2-n>3\ell /2-\ell /3=7\ell /6$,  we conclude that
\begin{equation}\label{D2}
S_2(n,\ell )+S_3(n,\ell)\leq C\mathcal{M}(\|f\|_{\mathbb{B}_1})(n). 
\end{equation}

Next we deal with the condition $\ell \leq 3n$. Now we have that
\begin{align*}
&W_n\cap W_\ell=\Big[\frac{\ell }{2},\frac{3n}{2}\Big]\cap \mathbb{N}=\left(\Big[\frac{\ell }{2},\frac{2\ell}{3}\Big]\cap \mathbb{N}\right)\bigcup \left(\Big[\frac{2\ell }{3},\frac{3n}{2}\Big]\cap \mathbb{N}\right)=:J_1\cup J_2,\\
&W_n\setminus W_\ell =\Big[\frac{n}{2},\frac{\ell}{2}\Big)\cap \mathbb{N},\\
&W_\ell\setminus W_n =\Big(\frac{3n}{2},\frac{3\ell}{2}\Big]\cap \mathbb{N},
\end{align*}
and we can write
\begin{align}\label{D3}
S_1(n,\ell)+S_2(n,\ell)+S_3(n,\ell)&\leq \sum_{m\in J_2\cap \mathbb{N}\setminus 2I}\|K(n,m)-K(\ell,m)\|_{\mathcal{L}(\mathbb{B}_1,\mathbb{B}_2)}{\|f(m)\|_{\mathbb{B}_1}}\nonumber\\
&\hspace{-3cm}+\Big(\sum_{\substack{m\in \mathbb{N}\setminus 2I\\n/2\leq m\leq 2\ell /3}}+\sum_{\substack{m\in \mathbb{N}\setminus 2I\\3n/2\leq m\leq 3\ell /2}}\Big)\frac{\|f(m)\|_{\mathbb{B}_1}}{|n-m|}=:T_1(n,\ell)+T_2(n,\ell ).
\end{align}

In order to estimate $T_1(n,\ell)$, we decompose it into two terms as follows:
\begin{align*}
T_1(n,\ell )&=\Big(\sum_{\substack{m\in J_2\cap \mathbb{N}\setminus 2I\\|n-m|\leq 2|n-\ell|}}+\sum_{\substack{m\in J_2\cap \mathbb{N}\setminus 2I\\|n-m|>2|n-\ell|}}\Big)\|K(n,m)-K(\ell,m)\|_{\mathcal{L}(\mathbb{B}_1,\mathbb{B}_2)}{\|f(m)\|_{\mathbb{B}_1}}\\
&=:T_{1,1}(n,\ell)+T_{1,2}(n,\ell).
\end{align*}

Then, according to the size condition $(a)$ and (\ref{nml}) we get
$$
T_{1,1}(n,\ell)\leq C\sum_{\substack{m\in\mathbb{N}\setminus 2I\\|n-m|\leq 2|n-\ell|}}\frac{\|f(m)\|_{\mathbb{B}_1}}{|n-m|}\leq C\sum_{m\in \mathbb{N}\setminus 2I}\frac{|n-\ell |}{|n-m|^2}\|f(m)\|_{\mathbb{B}_1},
$$
and by taking into account the regularity property $(b1)$ for $K$  we deduce the same estimate for $T_{1,2}(n,\ell )$. Hence, 
\begin{align}\label{D4}
T_1(n,\ell )&\leq C\sum_{m\in \mathbb{N}\setminus 2I}\frac{|n-\ell |}{|n-m|^2}\|f(m)\|_{\mathbb{B}_1}\leq C\sum_{k=1}^\infty \sum_{m\in 2^{k+1}I\setminus 2^kI}\frac{|n-\ell|}{|n-m|^2}\|f(m)\|_{\mathbb{B}_1}\nonumber\\
&\leq C\sum_{k=1}^\infty\frac{\#(I)}{2^{2k}(\#(I))^2}\sum_{m\in 2^{k+1}I}\|f(m)\|_{\mathbb{B}_1}\leq C\mathcal{M}(\|f\|_{\mathbb{B}_1})(n).
\end{align}

We analyze now $T_2(n,\ell)$. It is clear that $m-n>n/2$, when $m>3n/2$. We can also establish that if $m\in \mathbb{N}\setminus 2I$ and $n/2\leq m\leq 2\ell/3$, then $|m-n|>n/8$. Let $m\in \mathbb{N}\setminus 2I$, such that $n/2\leq m\leq 2\ell/3$. 
If $\ell <5n/4$, then, $m\leq 5\ell /6$, and $n-m>n/6$. When $\ell \geq 5n/4$, we get that $|n-m|>(b-a)/2>(5n/4-n)/2=n/8$ and the result is establihed.

Then, since $\ell \leq 3n$, we can write
\begin{equation}\label{D5}
T_2(n,\ell )\leq \frac{C}{n}\Big(\sum_{m\in [n/2,2n]\cap \mathbb{N}}+\sum_{m\in [n,9n/2]\cap \mathbb{N}}\Big)\|f(m)\|_{\mathbb{B}_1}\leq C\mathcal{M}(\|f\|_{\mathbb{B}_1}).
\end{equation}

By combining (\ref{D1})-(\ref{D5}) we establish (\ref{2.1}).

Since $T$ and $T_{\rm glob}$ are bounded from $\ell _{\mathbb{B}_1}^r(\mathbb{N})$ into $\ell _{\mathbb{B}_2}^r(\mathbb{N})$, $T_{\rm loc}$ is also bounded from $\ell _{\mathbb{B}_1}^r(\mathbb{N})$ into $\ell _{\mathbb{B}_2}^r(\mathbb{N})$.
Hence, according to \cite[Theorem 1.1]{GLY}, for every $1<p<\infty$, $T_{\rm loc}$ can be extended from $\ell _{\mathbb{B}_1}^p(\mathbb{N})\cap \ell _{\mathbb{B}_1}^r(\mathbb{N})$ to $\ell _{\mathbb{B}_1}^p(\mathbb{N})$ as a bounded operator from $\ell _{\mathbb{B}_1}^p(\mathbb{N})$ into $\ell _{\mathbb{B}_2}^p(\mathbb{N})$ and $T_{\rm loc}$ can be extended from $\ell _{\mathbb{B}_1}^1(\mathbb{N})\cap \ell _{\mathbb{B}_1}^r(\mathbb{N})$ to $\ell _{\mathbb{B}_1}^1(\mathbb{N})$ as a bounded operator from $\ell _{\mathbb{B}_1}^1(\mathbb{N})$ into $\ell _{\mathbb{B}_2}^{1,\infty }(\mathbb{N})$. The same properties are satisfied by $T$ because $T_{\rm glob}$ also verifies them.

Finally, by adapting the arguments in \cite[Lemmas 5.15, 7.9 and 7.10 and Theorems 7.11 and 7.12]{Duo} to vector-valued homogeneous settings, we conclude that $T_{\rm loc}$, and then $T$, can be extended from $\ell _{\mathbb{B}_1}^p(\mathbb{N},w)\cap \ell _{\mathbb{B}_1}^r(\mathbb{N})$ to $\ell _{\mathbb{B}_1}^p(\mathbb{N},w)$ as a bounded operator from $\ell _{\mathbb{B}_1}^p(\mathbb{N},w)$ into $\ell _{\mathbb{B}_2}^p(\mathbb{N},w)$, for every $1<p<\infty $ and $w\in A_p(\mathbb{N})$, and from $\ell _{\mathbb{B}_1}^1(\mathbb{N},w)\cap \ell _{\mathbb{B}_1}^r(\mathbb{N})$ to $\ell _{\mathbb{B}_1}^1(\mathbb{N},w)$ as a bounded operator from $\ell _{\mathbb{B}_1}^1(\mathbb{N},w)$ into $\ell _{\mathbb{B}_2}^{1,\infty }(\mathbb{N},w)$, for every $w\in A_1(\mathbb{N})$.

Thus the proof of this theorem is completed.
\end{proof}

\section{Proof of Theorem \ref{Th1} for maximal operators}\label{S3}

In this section we prove the boundedness properties for the maximal operators associated with heat and Poisson semigroups stated in Theorem \ref{Th1}.
From (\ref{1.5}) we deduce that $P_*^\lambda (f)\leq W_*^\lambda (f)$. Hence, the properties in Theorem \ref{Th1} for $P_*^\lambda $ can be infered from the corresponding properties for $W_*^\lambda$. Hence, it is sufficient to prove Theorem \ref{Th1} for $W_*^\lambda$.

Let $1\leq p\leq \infty$. Since $\Delta _\lambda $ is a bounded operator from $\ell ^p(\mathbb{N})$ into itself, $\Delta _\lambda$ generates the $C_0$-semigroup $\{W_t^\lambda :=e^{t\Delta _\lambda}\}_{t>0}$ of operators in $\ell ^p(\mathbb{N})$. We have that
\begin{equation}\label{3.1}
\partial _tW_t^\lambda (f)=\Delta _\lambda W_t^\lambda (f), \quad f\in \ell ^p(\mathbb{N})\mbox{ and }t>0.
\end{equation}
Moreover, $\{W_t^\lambda \}_{t>0}$ is not Markovian. Indeed, let $g=(g(n)=1)_{n\in \mathbb{N}}$. If $W_t^\lambda (g)=g$, $t>0$, then, by (\ref{3.1}) we have that $\Delta _\lambda g=0$, and that is clearly impossible.

According to  (\ref{1.4}), (\ref{1.2}) and (\ref{5prime}) we can write, for every $f=(f(n))_{n\in \mathbb{N}}\in \ell ^2(\mathbb{N})$, 
\begin{align}\label{eq:9}
\sum_{n\in \mathbb{N}}|W_t^\lambda (f)(n)|^2& =\int_{-1}^1|\mathcal{F}_\lambda (W_t^\lambda f)(x)|^2dx=\int_{-1}^1|e^{-2t(1-x)}\mathcal{F}_\lambda (f)(x)|^2dx\nonumber\\
& \leq \int_{-1}^1|\mathcal{F}_\lambda (f)(x)|^2dx=\sum_{n\in \mathbb{N}}|f(n)|^2,\quad t>0.
\end{align}
Hence, for every $t>0$, $W_t^\lambda$ is contractive in $\ell ^2(\mathbb{N})$. 
We now prove that the maximal operator $W_*^\lambda$ is bounded in $\ell ^2(\mathbb{N})$. By proceeding as in \cite[p. 75]{StLP} we obtain
$$
W_*^\lambda (f)\leq M^\lambda (f)+g_{W^\lambda }^1(f),\quad f\in \ell ^2(\mathbb{N}),
$$
where
$$
g_{W^\lambda }^1(f)(n):=\left(\int_0^\infty |t\partial _t(W_t^\lambda f)(n)|^2\frac{dt}{t}\right)^{1/2},\quad n\in \mathbb{N},
$$
and
$$
M^\lambda (f)(n):=\sup_{s>0}\frac{1}{s}\left|\int_0^sW_t^\lambda (f)(n)dt\right|,\quad n\in \mathbb{N}.
$$

The $g$-function $g_{W^\lambda }^1$ is bounded from $\ell ^2(\mathbb{N})$ into itself. This can be seen by using spectral arguments (see \cite[p. 74]{StLP}). Since $W_t^\lambda $ is a contraction in $\ell ^2(\mathbb{N})$, for every $t>0$, the Hopf-Dunford-Schwartz ergodic theorem (see \cite{Stein}) allows us to show that the maximal operator $M^\lambda$ is bounded from $\ell ^2(\mathbb{N})$ into itself. Hence, $W_*^\lambda$ is bounded from $\ell ^2(\mathbb{N})$ into itself.

In order to show $\ell^p$-properties of $W_*^\lambda $ for $1\leq p<\infty $, $p\not=2$, we use Theorem \ref{Th2.1}.

We consider the operator
$$
T:\ell ^2( \mathbb{N})\longrightarrow \ell _\mathbb{B}^2(\mathbb{N})
$$
$$
\hspace{3cm}f\rightarrow T(f)(n;t):=W_t^\lambda (f)(n),
$$
where $\mathbb{B}=L^\infty (0,\infty )$. The operator $T$ is bounded from $\ell ^2(\mathbb{N})$ into $\ell _\mathbb{B}^2(\mathbb{N})$, because $W_*^\lambda $ is bounded from $\ell ^2(\mathbb{N})$ into itself.

According to (\ref{5prime}), for every $f\in \mathbb{C}_0^\mathbb{N}$  we can write
$$
T(f)(n;t)=(h_t^\lambda \#_\lambda f)(n)=\sum_{m=0}^Nf(m)\;_\lambda \tau _n(h_t^\lambda )(m),\quad n\in \mathbb{N}\mbox{ and }t>0,
$$
where $N\in \mathbb{N}$ is such that $f(n)=0$, $n\geq N$.

To simplify we define $K_t^\lambda (n,m):=\,_\lambda \tau _n(h_t^\lambda )(m)$, $n,m\in \mathbb{N}$ and $t>0$. By (\ref{1.3}) we have that
$$
K_t^\lambda (n,m)=\sum_{k=|n-m|}^{n+m}c_\lambda (n,m,k)h_t^\lambda (k),\quad m,n\in \mathbb{N},
$$
where $c_\lambda (n,m,k)$, $n,m,k\in \mathbb{N}$, and $h_t^\lambda $, $t>0$, are given by (\ref{clambda}) and (\ref{hlambda}), respectively.

We are going to see that there exists $C>0$ such that, for every $n,m,\ell \in \mathbb{N}$, $n\not=m$,
\begin{equation}\label{3.2}
\|K_t^\lambda (n,m)\|_\mathbb{B}\leq \frac{C}{|n-m|};
\end{equation}
\begin{equation}\label{3.3}
\|K_t^\lambda (n,m)-K_t^\lambda (\ell , m)\|_\mathbb{B}\leq C\frac{|n-\ell |}{|n-m|^2},\quad |n-m|>2|n-\ell |,\;\frac{m}{2}<n,\ell <\frac{3m}{2};
\end{equation}
\begin{equation}\label{3.4}
\|K_t^\lambda (m,n)-K_t^\lambda (m, \ell)\|_\mathbb{B}\leq C\frac{|n-\ell |}{|n-m|^2},\quad |n-m|>2|n-\ell |,\;\frac{m}{2}<n,\ell <\frac{3m}{2}.
\end{equation}
When properties (\ref{3.2}), (\ref{3.3}) and (\ref{3.4}) are established then, from Theorem \ref{Th2.1} we deduce the $\ell^p$-boundedness properties stated in Theorem \ref{Th1} for $W_*^\lambda$. Observe that, since $K_t^\lambda (n,m)=K_t^\lambda (m,n)$, $n,m\in \mathbb{N}$, $t>0$, we only have to establish (\ref{3.2}) and (\ref{3.3}).

\begin{proof}[Proof of (\ref{3.2})] 
We will use the following integral representations for the  modified Bessel  function $I_\nu$ (\cite[(5.10.22)]{Leb}
\begin{equation}\label{3.5}
I_\nu (z)=\frac{z^\nu}{\sqrt{\pi }2^\nu \Gamma (\nu +1/2)}\int_{-1}^1e^{-zs}(1-s^2)^{\nu -1/2}ds,\quad z>0\mbox{ and }\nu >-\frac{1}{2},
\end{equation}
and

\begin{equation}\label{3.11}
I_\nu (z)=-\frac{z^{\nu -1}}{\sqrt{\pi }2^{\nu -1}\Gamma (\nu -1/2)}\int_{-1}^1e^{-zs}s(1-s^2)^{\nu -3/2}ds,\quad z>0\mbox{ and }\nu >\frac{1}{2}.
\end{equation}
Note that (\ref{3.11}) is obtained by partial integration in (\ref{3.5}) (see \cite[(34)]{CGRTV}).

We have that
\begin{equation}\label{3.60}
0\leq e^{-2t}t^{-\lambda}I_{\lambda +k}(2t)\leq \frac{C}{(k+1)^{2\lambda +1}},\quad k\in \N \mbox{ and }t>0,
\end{equation}
where $C>0$ does not depend on $k\in \mathbb{N}$ nor on $t>0$.

To see this estimation we proceed as in \cite[Proposition 3]{CGRTV}. For every $k\in \mathbb{N}$ and $t>0$, we can write
\begin{align*}
0\leq e^{-2t}t^{- \lambda }I_{\lambda +k}(2t)&=\frac{t^k}{\sqrt{\pi }\Gamma (\lambda +k+1/2)}\int_{-1}^1e^{-2t(s+1)}(1-s^2)^{\lambda +k-1/2}ds\\
&=\frac{2t^{k-1}}{\sqrt{\pi} \Gamma (\lambda +k+1/2)}\int_0^te^{-4w}\Big(\frac{2w}{t}\Big)^{\lambda +k-1/2}\Big[2\Big(1-\frac{w}{t}\Big)\Big]^{\lambda +k-1/2}dw\\
&\leq \frac{2^{2(\lambda +k)}}{\sqrt{\pi }\Gamma (\lambda +k+1/2)}\int_0^te^{-4w}\Big(\frac{w}{t}\Big)^{\lambda +1/2}\Big(1-\frac{w}{t}\Big)^{\lambda +k-1/2}w^{k-1}dw\\
&\leq \frac{2^{2(\lambda +k)}}{\sqrt{\pi }\Gamma (\lambda +k+1/2)}\Big(\frac{\lambda +1/2}{2\lambda +k}\Big)^{\lambda +1/2}\int_0^te^{-4w}w^{k-1}dw\\
&\leq\frac{2^{2\lambda}(\lambda +1/2)^{\lambda +1/2}}{\sqrt{\pi}}\frac{\Gamma(k)}{\Gamma (\lambda +k+1/2)(2\lambda +k)^{\lambda +1/2}}\leq \frac{C}{(k+1)^{2\lambda +1}}.\end{align*}
We have taken into account that
\begin{equation}\label{3.6}
(1-r)^\eta r^\gamma \leq \Big(\frac{\gamma}{\gamma +\eta}\Big)^\gamma,\quad 0<r<1,\;\eta >0\mbox{ and }\gamma >0.
\end{equation}
By using (\ref{3.8}) and (\ref{3.60}) we obtain
$$
0\leq K_t^\lambda (n,m)\leq C\sum_{k=0}^\infty \frac{c_\lambda (n,m,k)}{(k+1)^{\lambda +1}},\quad n,m\in \mathbb{N},n\not=m\mbox{ and }t>0.
$$
Since $c_\lambda (n,m,k)=c_\lambda (m,n,k)$, $n,m,k\in \mathbb{N}$, \cite[Lemma 3b]{AH1} says that, if $0<\alpha <1/2$, then
\begin{equation}\label{3.9}
\sum_{k=0}^\infty \frac{c_\lambda (n,m,k)}{(k+1)^{\lambda +2\alpha +1}}\leq \frac{C}{|n-m|^{2\alpha +1}},\quad n,m\in \mathbb{N},n\not=m.
\end{equation}
The same proof of \cite[Lemma 3b]{AH1} allows us to see that (\ref{3.9}) also holds for $\alpha =0$ and $\alpha =1/2$.

Hence,
$$
\|K_t^\lambda (n,m)\|_\mathbb{B}\leq \frac{C}{|n-m|},\quad n,m\in \mathbb{N},\;n\not=m.
$$
\end{proof}

\begin{proof}[ Proof of (\ref{3.3})] In a first step we show the following estimations:
\begin{align*}
(A1) &\hspace{1cm}\|K_t^\lambda (n+1,m)-K_t^\lambda (n,m)\|_\mathbb{B}\leq \frac{C}{|n-m|^2},\quad n,m\in \mathbb{N},n>m;\\
(A2) &\hspace{1cm} \|K_t^\lambda (n,m)-K_t^\lambda (n-1,m)\|_\mathbb{B}\leq \frac{C}{|n-m|^2},\quad n,m\in \mathbb{N},\,1\leq n<m\leq 2n.
\end{align*}

Let $n,m\in \mathbb{N}$ and $n>m$. We can write
\begin{align}\label{A1d}
K_t^\lambda (n+1,m)-K_t^\lambda (n,m)&=\sum_{k=n-m+1}^{n+m+1}c_\lambda (n+1,m,k)h_t^\lambda (k)-\sum_{k=n-m}^{n+m}c_\lambda (n,m,k)h_t^\lambda (k)\nonumber\\
&=\sum_{k=n-m}^{n+m}[c_\lambda (n+1,m,k+1)h_t^\lambda (k+1)-c_\lambda (n,m,k)h_t^\lambda (k)]\nonumber\\
&=\sum_{k=n-m}^{n+m}c_\lambda (n+1,m,k+1)\Big[h_t^\lambda (k+1)-\frac{\sqrt{w_\lambda (k+1)}}{\sqrt{w_\lambda (k)}}h_t^\lambda (k)\Big]\nonumber\\
&\quad +\sum_{k=n-m}^{n+m}\Big[\frac{\sqrt{w_\lambda (k+1)}}{\sqrt{w_\lambda (k)}}c_\lambda (n+1,m,k+1)-c_\lambda (n,m,k)\Big]h_t^\lambda (k)\nonumber\\
&=:I_1(n,m,t)+I_2(n,m,t),\quad t>0.
\end{align}

By combining (\ref{3.5}) and (\ref{3.11}) (see \cite[p. 9]{CGRTV}) we get, for every $t>0$ and $k\in \mathbb{N}$,
\begin{align*}
e^{-2t}t^{-\lambda }|I_{\lambda +k+1}(2t)-I_{\lambda +k}(2t)|&=\frac{t^k}{\sqrt{\pi }\Gamma (\lambda +k+1/2)}\int_{-1}^1e^{-2t(1+s)}(1+s)(1-s^2)^{\lambda +k-1/2}ds\\
&=\frac{2^{2(\lambda +k)+1}t^{k-1}}{\sqrt{\pi }\Gamma (\lambda +k+1/2)}\int_0^te^{-4w}\Big(\frac{w}{t}\Big)^{\lambda +k+1/2}\Big(1-\frac{w}{t}\Big)^{\lambda +k-1/2}dw\\
&=\frac{2^{2(\lambda +k)+1}}{\sqrt{\pi }\Gamma (\lambda +k+1/2)}\int_0^te^{-4w}w^{k-1}\Big(\frac{w}{t}\Big)^{\lambda +3/2}\Big(1-\frac{w}{t}\Big)^{\lambda +k-1/2}dw.
\end{align*}

\noindent By using (\ref{3.6}) we conclude
\begin{align}\label{3.110}
e^{-2t}t^{-\lambda }|I_{\lambda +k+1}(2t)-I_{\lambda +k}(2t)|&\leq \frac{2^{2(\lambda +k)+1}}{\sqrt{\pi }\Gamma (\lambda +k+1/2)}\Big(\frac{\lambda +3/2}{2\lambda +k+1}\Big)^{\lambda +3/2}\int_0^\infty e^{-4w}w^{k-1}dw\nonumber\\
&\leq C\frac{\Gamma (k)}{(2\lambda +k+1)^{\lambda +3/2}\Gamma (\lambda +k+1/2)}\leq \frac{C}{(k+1)^{2\lambda +2}},\quad k\in \mathbb{N}\mbox{ and }t>0.
\end{align}
According to (\ref{3.8}) and \eqref{3.9} for $\alpha =1/2$, it follows that
\begin{equation}\label{3.12}
|I_1(n,m,t)|\leq C\sum_{k=|n-m|}^{n+m}\frac{c_\lambda (n,m,k)}{(k+1)^{\lambda +2}}\leq C\frac{C}{|n-m|^2},\quad t>0.
\end{equation}

We now analyze $I_2(n,m,t)$, $t>0$. Note that, if $k\in \mathbb{N}$, $c_\lambda (n,m,k)\not=0$ if and only if $c_\lambda(n+1,m,k+1)\not=0$. 
By using (\ref{3.8}) and (\ref{3.60}), we can write 
\begin{align*}
|I_2(n,m,t)|&\leq \sum_{\substack{|n-m|\leq k\leq n+m\\n+m+k\;{ \rm even }}}c_\lambda (n,m,k)\left|\frac{\sqrt{w_\lambda (k+1)}c_\lambda (n+1,m,k+1)}{\sqrt{w_\lambda (k)}c_\lambda (n,m,k)}-1\right|h_t^\lambda (k)\\
&\leq \sum_{\substack{|n-m|\leq k\leq n+m\\n+m+k\;{ \rm even }}}\frac{c_\lambda (n,m,k)}{(k+1)^{\lambda +1}}\left|\frac{\sqrt{w_\lambda (k+1)}c_\lambda (n+1,m,k+1)}{\sqrt{w_\lambda (k)}c_\lambda (n,m,k)}-1\right|
\end{align*}

Let $k\in \mathbb{N}$, $n-m\leq k\leq n+m$. Assume that $n+m+k=2\sigma$, with $\sigma \in \mathbb{N}$. Straightforward manipulations lead to 
$$
\frac{\sqrt{w_\lambda (k+1)}c_\lambda (n+1,m,k+1)}{\sqrt{w_\lambda (k)}c_\lambda (n,m,k)}
=\Big(\frac{n+1}{n+2\lambda}\Big)^{1/2}\Big(\frac{n+\lambda+1 }{n+\lambda }\Big)^{1/2}\frac{k+\lambda +1}{k+\lambda }\frac{\sigma -m+\lambda }{\sigma -m+1}\frac{\sigma+ 2\lambda }{\sigma +\lambda +1}.
$$
To simplify we define
$$
\alpha _1:=\sqrt{\frac{n+1}{ n+2\lambda}},\quad \alpha _2:=\sqrt{\frac{n+\lambda +1}{n+\lambda}},\quad \alpha _3:=\frac{k+\lambda +1}{k+\lambda },\quad
\alpha _4:=\frac{\sigma -m+\lambda}{\sigma -m+1},\quad \alpha _5:=\frac{\sigma+2\lambda}{\sigma +\lambda+1}.
$$
There exists $C>0$ independent on $n,m$ and $k$, such that $0\leq \alpha _j\leq C$, $j=1,...,5$. Also we get
$$
\Big(\prod_{j=1}^5\alpha _j\Big)-1=\sum_{j=1}^5(\alpha _j-1)\prod_{i=j+1}^5\alpha _i .
$$
We deduce that
\begin{equation}\label{calphaj}
\left|\frac{\sqrt{w_\lambda (k+1)}c_\lambda (n+1,m,k+1)}{\sqrt{w_\lambda (k)}c_\lambda (n,m,k)}-1\right|\leq C\sum_{j=1}^5|\alpha _j-1|.
\end{equation}

Since
$$ |\alpha_j-1|\leq \frac{C}{n+1}, \;\;j=1,2,\quad |\alpha_3-1|\leq \frac{C}{k+1}, \quad |\alpha_4-1|\leq \frac{C}{\sigma -m},\quad \mbox{ and }
|\alpha_5-1|\leq \frac{C}{\sigma},
$$
we have that
$$
|\alpha _j-1|\leq \frac{C}{n-m},\quad j=1,...,5.
$$
By using \eqref{3.8}, \eqref{3.60} and \eqref{3.9} (for $\alpha =0$) we obtain
\begin{equation}\label{I2}
|I_2(n,m,t)|\leq \frac{C}{n-m}\sum_{k=n-m}^{n+m}\frac{c_\lambda(n,m,k)}{(k+1)^{\lambda +1}}\leq \frac{C}{|n-m|^2},\quad t>0,
\end{equation}
According to (\ref{A1d}), (\ref{3.12}) and (\ref{I2}) we deduce property (A1).

Next we justify (A2). Suppose that $1\leq n<m\leq 2n$. We have that 
$$
K_t^\lambda (n,m)-K_t^\lambda (n-1,m)=
\sum_{k=m-n}^{m+n}c_\lambda (n,m,k)h_t ^\lambda (k)-\sum_{k=m-n+1}^{n+m-1}c_\lambda (n-1,m,k)h_t^\lambda (k),\quad t>0.
$$
In order to make the estimations easier, we assemble the terms in the sums in a suitable form. We observe that the first sum has two terms more than the sencond one, so we proceed as follows:
\begin{align}\label{A2d}
K_t^\lambda (n,m)-K_t^\lambda (n-1,m)&=\left(\sum_{k=m-n}^{m-1}+\sum_{k=m+2}^{n+m}\right)c_\lambda (n,m,k)h_t ^\lambda (k)\nonumber\\
&\hspace{-3cm}+c_\lambda (n,m,m)h_t^ \lambda (m)+c_\lambda (n,m,m+1)h_t^\lambda (m+1)-\sum_{k=m-n+1}^{n+m-1}c_\lambda (n-1,m,k)h_t^\lambda (k)\nonumber\\
&\hspace{-3cm}=\sum_{k=m-n+1}^{m}c_\lambda (n,m,k-1)h_t ^\lambda (k-1)+\sum_{k=m+1}^{n+m-1}c_\lambda (n,m,k+1)h_t^\lambda (k+1)\nonumber\\
&\hspace{-3cm}\quad +c_\lambda (n,m,m)h_t^ \lambda (m)+c_\lambda (n,m,m+1)h_t^\lambda (m+1)-\sum_{k=m-n+1}^{n+m-1}c_\lambda (n-1,m,k)h_t^\lambda (k)\nonumber\\
&\hspace{-3cm}=\sum_{k=m-n+1}^{m}[c_\lambda (n,m,k-1)h_t^\lambda (k-1)-c_\lambda (n-1,m,k)h_t^\lambda (k)]\nonumber\\
&\hspace{-3cm}\quad +\sum_{k=m+1}^{m+n-1}[c_\lambda (n,m,k+1)h_t^\lambda (k+1)-c_\lambda (n-1,m,k)h_t^\lambda (k)]\nonumber\\
&\hspace{-3cm}\quad +c_\lambda (n,m,m)h_t^ \lambda (m)+c_\lambda (n,m,m+1)h_t^\lambda (m+1)\nonumber\\
&\hspace{-3cm}=:J_1(n,m,t)+J_2(n,m,t)+J_3(n,m,t),\quad t>0.
\end{align}

We first  study $J_3(n,m,t)$, $t>0$. Note that $c_\lambda (n,m,m)\not=0$ if and only if $c_\lambda (n,m,m+1)=0$. 

Suppose that $n+2m=2\sigma$, with $\sigma \in \mathbb{N}$. By using \eqref{acotclambda} and \eqref{3.60} and we obtain
\begin{align*}
c_\lambda (n,m,m)h_t^\lambda (m)&\leq C\left(\frac{\sigma (\sigma -n+1)(\sigma -m+1)^2}{(n+1)(m+1)^2}\right)^{\lambda -1}\frac{1}{(m+1)^{\lambda +1}}\nonumber\\
&\leq C\frac{[(n+2m)(2m-n)(n+1)]^{\lambda -1}}{(m+1)^{3\lambda -1}},\quad t>0.
\end{align*}
Since $n<m\leq 2n$, we have that $n<2m-n<3n$, and we get
$$
J_3(n,m,t)\leq \frac{C}{(m+1)^2},\quad t>0.
$$
In a similar way if $n+1+2m=2\sigma $, with $\sigma \in \mathbb{N}$ then
$$
c_\lambda (n,m,m+1)h_t^\lambda (m+1)\leq  \frac{C}{(m+1)^2} ,\quad t>0.
$$ 
Thus, we have obtained that
\begin{equation}\label{J3}
J_3(n,m,t)\leq \frac{C}{(m+1)^2}\leq \frac{C}{|n-m|^2} ,\quad t>0.
\end{equation}

To analyze $J_1$ we decompose it as follows:
\begin{align*}
J_1(n,m,t)&=\sum_{k=m-n+1}^{m}c_\lambda (n,m,k-1)\Big[h_t^\lambda (k-1)-\frac{\sqrt{w_\lambda (k-1)}}{\sqrt{w_\lambda (k)}}h_t^\lambda (k)\Big]\\
&\quad +\sum_{k=m-n+1}^{m}\Big[\frac{\sqrt{w_\lambda (k-1)}}{\sqrt{w_\lambda (k)}}c_\lambda (n,m,k-1)-c_\lambda (n-1,m,k)\Big]h_t^\lambda (k)\\
&=:J_{1,1}(n,m,t)+J_{1,2}(n,m,t),\quad t>0.
\end{align*}

By (\ref{3.8}) and (\ref{3.110}) and taking into account (\ref{3.9}) (for $\alpha =1/2$) we deduce
$$
|J_{1,1}(n,m,t)|\leq \sum_{k=m-n+1}^{m}c_\lambda (n,m,k-1)\frac{\sqrt{w_\lambda (k-1)}}{(k+1)^{2\lambda +2}}\leq C\sum_{k=0}^\infty \frac{c_\lambda (n,m,k)}{(k+1)^{\lambda +2}}\leq \frac{C}{|n-m|^2}
,\quad t>0.
$$

On the other hand, by (\ref{3.7}) and (\ref{3.60}) we can write
\begin{align*}
|J_{1,2}(n,m,t)|&\leq \sum_{\substack{m-n+1\leq k\leq m\\n+m+k-1\;{\rm even}}}c_\lambda (n-1,m,k)\left|\frac{\sqrt{w_\lambda (k-1)}c_\lambda(n,m,k-1)}{\sqrt{w_\lambda (k)}c_\lambda (n-1,m,k)}-1\right|h_t^\lambda (k) \\
&\leq \sum_{\substack{m-n+1\leq k\leq m\\n+m+k-1\;{\rm even}}}\frac{c_\lambda (n-1,m,k)}{(k+1)^{\lambda +1}}\left|\frac{\sqrt{w_\lambda (k-1)}c_\lambda(n,m,k-1)}{\sqrt{w_\lambda (k)}c_\lambda (n-1,m,k)}-1\right|\quad t>0.
\end{align*}

Let $k\in \mathbb{N}$, $m-n+1\leq k\leq m$ such that $n+m+k-1=2\sigma $, with $\sigma\in \mathbb{N}$. As before, we can see that 
$$
\frac{\sqrt{w_\lambda (k-1)}c_\lambda(n,m,k-1)}{\sqrt{w_\lambda (k)}c_\lambda (n-1,m,k)}=\sqrt{\frac{n}{n+2\lambda-1}}\sqrt{\frac{n+\lambda}{n+\lambda-1}}\frac{k+\lambda -1}{k+\lambda }\frac{\sigma -n+1}{\sigma-n+\lambda+1}
\frac{\sigma -k+\lambda+1}{ \sigma -k+1}=:\prod_{j=1}^5\beta _j.
$$
There exists $C>0$ such that $|\beta_j|\leq C$, $j=1,...,5$, and
$$ |\beta_j-1|\leq \frac{C}{n+1}, \;\;j=1,2,\quad |\beta_3-1|\leq \frac{C}{k+1}, \quad |\beta_4-1|\leq \frac{C}{\sigma -n+1},\quad \mbox{ and }
|\beta_5-1|\leq \frac{C}{\sigma-k+1}.
$$
Then,
\begin{align*}
\left|\frac{\sqrt{w_\lambda (k-1)}c_\lambda(n,m,k-1)}{\sqrt{w_\lambda (k)}c_\lambda (n-1,m,k)}-1 \right|&\leq C\sum_{j=1}^5|\beta _j-1|\leq C\left(\frac{1}{n+1}+\frac{1}{k+1}+\frac{1}{m-n+k}+\frac{1}{m+n-k}\right)\\
&\leq C\frac{n+k}{(n+1)(k+1)}\leq \frac{m+1}{(n+1)(k+1)},
\end{align*}
and since $m\leq 2n$ we deduce, by considering (\ref{3.9}) (for $\alpha =1/2$), that
\begin{align*}
|J_{1,2}(n,m,t)|&\leq C\sum_{k=0}^\infty \frac{c_\lambda (n-1,m,k)}{(k+1)^{\lambda +2}}\leq \frac{C}{|n-m|^2},\quad t>0.
\end{align*}
Hence, we have established that
\begin{equation}\label{J1}
J_1(n,m,t)\leq \frac{C}{|n-m|^2} ,\quad t>0.
\end{equation}

Finally we deal with $J_2$. We write
\begin{align*}
J_2(n,m,t)&=\sum_{k=m+1}^{m+n-1}c_\lambda (n,m,k+1)\Big[h_t^\lambda (k+1)-\frac{\sqrt{w_\lambda (k+1)}}{\sqrt{w_\lambda (k)}}h_t^\lambda (k)\Big]\\
&\quad +\sum_{k=m+1}^{m+n-1}\Big[\frac{\sqrt{w_\lambda (k+1)}}{\sqrt{w_\lambda (k)}}c_\lambda (n,m,k+1)-c_\lambda (n-1,m,k)\Big]h_t^\lambda (k)\\
&=:J_{2,1}(n,m,t)+J_{2,2}(n,m,t),\quad t>0.
\end{align*}
Again, by taking into account (\ref{3.8}), (\ref{3.9}) (for $\alpha =1/2$) and (\ref{3.110}) we deduce
$$
|J_{2,1}(n,m,t)|\leq \sum_{k=m+1}^{m+n-1}c_\lambda (n,m,k+1)\frac{\sqrt{w_\lambda (k+1)}}{(k+1)^{2\lambda +2}}\leq C\sum_{k=0}^\infty \frac{c_\lambda (n,m,k)}{(k+1)^{\lambda +2}}\leq \frac{C}{|n-m|^2}
,\quad t>0.
$$
 Also, according to by (\ref{3.8}) and (\ref{3.60}), we have that
$$
|J_{2,2}(n,m,t)|\leq \sum_{\substack{m+1\leq k\leq m+n-1\\n+m+k+1\;{\rm even}}}\frac{c_\lambda (n-1,m,k)}{(k+1)^{\lambda +1}}\left|\frac{\sqrt{w_\lambda (k+1)}c_\lambda(n,m,k+1)}{\sqrt{w_\lambda (k)}c_\lambda (n-1,m,k)}-1\right|, 
\quad t>0.
$$

Consider $k\in \mathbb{N}$, $m+1\leq k\leq m+n$, such that $n+m+k+1=2\sigma $, with $\sigma \in \mathbb{N}$. We can write
$$
\frac{\sqrt{w_\lambda (k+1)}c_\lambda(n,m,k+1)}{\sqrt{w_\lambda (k)}c_\lambda (n-1,m,k)}=
\sqrt{\frac{n}{n+2\lambda-1}}\sqrt{\frac{n+\lambda}{n+\lambda-1}}\frac{k+\lambda +1}{k+\lambda }\frac{\sigma -m+2\lambda -1}{\sigma-m}
\frac{\sigma +2\lambda-1}{ \sigma +\lambda}=:\prod_{j=1}^5\gamma _j.
$$
There exists $C>0$ such that $|\gamma_j|\leq C$, $j=1,...,5$, and
$$
 |\gamma_j-1|\leq \frac{C}{n+1}, \;\;j=1,2,\quad |\gamma_3-1|\leq \frac{C}{k+1}, \quad |\gamma_4-1|\leq \frac{C}{\sigma -m},\quad \mbox{ and }
|\gamma_5-1|\leq \frac{C}{\sigma}.
$$

Since $m\leq 2n$, by proceeding as in the case of $J_{1,2}$, and using again (\ref{3.9}) (for $\alpha =1/2$) it follows that
\begin{align*}
|J_{2,2}(n,m,t)|&\leq C\sum_{k=m+1}^{m+n-1}\frac{c_\lambda (n-1,m,k)}{(k+1)^{\lambda +1}}\left(\frac{1}{n+1}+\frac{1}{k+1}+\frac{1}{n-m+k+1}+\frac{1}{n+m+k+1}\right)\\
&\leq \sum_{k=0}^\infty \frac{c_\lambda (n-1,m,k)}{(k+1)^{\lambda +2}}\leq \frac{C}{|n-m|^2},\quad t>0.
\end{align*}

Thus, 
\begin{equation}\label{J2}
J_2(n,m,t)\leq \frac{C}{|n-m|^2} ,\quad t>0.
\end{equation}
Estimations \eqref{A2d}, \eqref{J3}, \eqref{J1} and \eqref{J2} allow us to conclude property (A2). 

We are going now to establish \eqref{3.3}.  
First observe that when $|n-m|>2|n-\ell |$, $n,m,\ell \in \mathbb{N}$, then 
\begin{equation}\label{nm2l}
\max\{n,m\}>\min\{n,m\}+|n-\ell|\quad \mbox{ and }\quad |n-m|-|n-\ell |>\frac{|n-m|}{2}.
\end{equation}

Let $n,m,\ell \in \mathbb{N}$, $n\not =m$, $|n-m|>2|n-\ell|$ and $m/2\leq n,\ell\leq 3m/2$.  Suppose that $n\leq \ell$. In this case, we have that
$$
\|K_t^\lambda (n,m)-K_t^\lambda (\ell , m)\|_\mathbb{B}\leq \sum_{j=0}^{\ell -n-1}\|K_t^\lambda (n+j,m)-K_t^\lambda (n+j+1, m)\|_\mathbb{B}.
$$
If $n>m$, we can apply $(A1)$ and obtain
$$
\|K_t^\lambda (n,m)-K_t^\lambda (\ell , m)\|_\mathbb{B}\leq C\sum_{j=0}^{\ell -n-1}\frac{1}{(n+j-m)^2}\leq C\frac{\ell -n}{|n-m|^2}.
$$
When $n<m$, \eqref{nm2l} leads to $n+j+1\leq n+ (\ell -n)<m$, $j=0,...,n-\ell -1$, and we can apply $(A2)$ to get the estimate.

In a similar way if $\ell \leq n$, we write
$$
\|K_t^\lambda (n,m)-K_t^\lambda (\ell , m)\|_\mathbb{B}\leq \sum_{j=0}^{n-\ell -1}\|K_t^\lambda (n-j,m)-K_t^\lambda (n-j-1, m)\|_\mathbb{B},
$$
and use $(A2)$ when $n<m$, and $(A1)$ if $n>m$, since in this case by \eqref{nm2l} it follows that
$$
n-j-1\geq \ell=n-(n-\ell )>m,\quad j=0,...,n-\ell -1.
$$
\end{proof}

By invoking Theorem \ref{Th2.1} we deduce that, for every $1\leq p<\infty $ and $w\in A_p(\mathbb{N})$ the operator $T$ can be extended from $\ell ^2(\mathbb{N})\cap \ell ^p(\mathbb{N},w)$ to $\ell ^p(\mathbb{N}, w)$ as a bounded operator from $\ell ^p(\mathbb{N},w)$ into $\ell ^p_\mathbb{B}(\mathbb{N},w)$, when $p\in (1,\infty)$ and from $\ell^1(\mathbb{N},w)$ into $\ell ^{1,\infty }_\mathbb{B}(\mathbb{N},w)$.

Let $p\in [1,\infty)$, $w\in A_p(\mathbb{N})$ and denote by $\mathbb{T}$ the extension obtained. We are going to see that, for every $f \in \ell ^p(\mathbb{N},w)$,  
\begin{equation}\label{extension}
 [\mathbb{T}(f)(n)](t)=( h_t ^\lambda \#_\lambda f)(n),\quad n\in \mathbb{N}\mbox{ and }t>0,
\end {equation}
and, thus, we prove that the maximal operator $W_*^\lambda $ is bounded from $\ell ^p(\mathbb{N},w)$ into itself.

Let $n\in \mathbb{N}$ and $t>0$. We consider the operator
$$
\begin{array}{l}
P_{t,n}:\ell ^p(\mathbb{N}, w)\longrightarrow \mathbb{C}\\
\hspace{1.5cm}f\longrightarrow P_{t,n}(f):=( h_t^\lambda \#_\lambda f)(n).
\end{array}
$$

We show that $P_{t,n}$ is a bounded operator. Assume first that $p\in (1,\infty)$. For every $f\in \ell ^2(\mathbb{N})\cap \ell ^p(\mathbb{N},w)$, we have that 
$$
|w(n)^{1/p}W_t^\lambda (f)(n)|\leq \|W_*^\lambda (f)\|_{\ell ^p(\mathbb{N},w)}=\|\mathbb{T}(f)\|_{\ell ^p(\mathbb{N},w)}\leq C\|f\|_{\ell ^p(\mathbb{N},w)}.
$$
Then, for each $f\in \ell ^2(\mathbb{N})\cap \ell ^p(\mathbb{N},w)$,
$$
\left|\sum_{m\in \mathbb{N}}f(m)_\lambda\tau_n(h_t^\lambda )(m)\right|=|(h_t^\lambda \#f)(n)|=|W_t^\lambda (f)(n)|\leq C\frac{\|f\|_{\ell ^p(\mathbb{N},w)}}{w(n)^{1/p}}.
$$
Hence, $\;_\lambda  \tau _n h_t^\lambda \in (\ell ^p(\mathbb{N},w))'=\ell ^{p'}(\mathbb{N}, w^{-1/(p-1)})$ and we obtain that $P_{t,n}$ is bounded. 

If $p=1$, it is sufficient to establish that the sequence
$$
\left(\frac{\;_\lambda \tau _n(h_t^\lambda)(m)}{w_\lambda (m)}\right)_{m\in \mathbb{N}},
$$
is in $\ell ^\infty (\mathbb{N})$. 

By taking into account \cite[(9)]{Hi1} and estimations \eqref{3.8} and (\ref{3.60}), we have 
$$
0\leq \;_\lambda \tau _n(h_t^\lambda )(n)\leq C\sum_{k=0}^\infty c_\lambda (n,n,k)\frac{\sqrt{w_\lambda (k)}}{(k+1)^{2\lambda +1}}
\leq C\sum_{k=0}^\infty c_\lambda(n,n,k)w_\lambda (k)=Cw_\lambda (n).
$$

On the other hand, size condition (\ref{3.2}) says that
$$
\;_\lambda \tau _n(h_t^\lambda )(m)\leq \frac{C}{|n-m|},\quad m\not=n,
$$
and, since $w\in A_1(\mathbb{N})$, it follows that, there exists $C>0$ for which
$$
\frac{1}{w(m)}\leq \max_{0\leq k\leq m}\frac{1}{w(k)}\sum _{k=0}^m\frac{w(k)}{m+1}\left(\sum_{k=0}^m\frac{w(k)}{m+1}\right)^{-1}
\leq C(m+1)\left(\sum_{k=0}^mw(k)\right)^{-1}\leq C\frac{m+1}{w(0)},\quad m\in \mathbb{N}.
$$
Then, we can find $C>0$ such that
$$
\frac{\;_\lambda \tau _n(h_t^\lambda)(m)}{w_\lambda (m)}\leq C\frac{m+1}{|n-m|}\leq C,\quad m\not=n,
$$
and we conclude that $P_{t,n}$ is bounded in $\ell ^1(\mathbb{N},w)$.

Let us now consider $f\in \ell ^p(\mathbb{N},w)$ and choose a sequence $(f_k)_{k\in \mathbb{N}}\in \ell ^2(\mathbb{N})\cap \ell ^p(\mathbb{N},w)$ such that $f_k\longrightarrow f$, as $k\rightarrow \infty$, in $\ell ^p(\mathbb{N},w)$.
The boundedness of $\mathbb{T}$ implies that 
$$
\mathbb{T}(f_k)=T(f_k)\longrightarrow \mathbb{T}(f),\mbox{ as }k\rightarrow  \infty , 
$$
in $\ell ^p_\mathbb{B}(\mathbb{N},w)$, when $p\in (1,\infty)$ and in $\ell ^{1,\infty }_\mathbb{B}(\mathbb{N},w)$, when $p=1$.

Suppose $p\in (1,\infty)$. Hence, 
$$
T(f_k)(n;t)=P_{t,n}(f_k)\longrightarrow [\mathbb{T}(f)(n)](t),\mbox{ as }k\rightarrow  \infty, \mbox{ in }\mathbb{C},
$$
and, according to the boundedness of $P_{t,n}$, we can conclude that
$$
 [\mathbb{T}(f)(n)](t)=P_{t,n}(f)=(f\#_\lambda h_t ^\lambda )(n).
$$

When $p=1$, we consider $F_k=(F_k(m))_{m\in \mathbb{N}}$, $k\in \mathbb{N}$, and $F=(F(m))_{m\in \mathbb{N}}$, where $F_k(m)=P_{t,m}(f_k)$ and $F(m)=P_{t,m}(f)$, $m,k\in \mathbb{N}$. 

Since $P_{t,m}$, $m\in  \mathbb{N}$, is a bounded operator, we have that
$$
F_k(m)\longrightarrow F(m), \mbox{ as }k\rightarrow \infty,\quad m\in \mathbb{N}.
$$
Also, we have that $F_k=T(f_k)\longrightarrow \mathbb{T}(f)$, as $k\rightarrow \infty$, in $\ell ^{1,\infty }_\mathbb{B}(\mathbb{N},w)$. Then we can conclude that $\mathbb{T}(f)=F$, that is, \eqref{extension} is verified when $p=1$.

\section{Proof of Theorem \ref{Th1} for Littlewood-Paley functions}\label{S4}

Let us consider along this section the Banach space $\mathbb{B}=L^2((0,\infty ),dt/t)$. The following technical lemma will be useful.

\begin{lema}\label{tecnico}
Let $k\in \mathbb{N}$, $a,\alpha, \beta \in \mathbb{R}$ such that $k+2a>\beta +1>a$, $k+\beta +1>0$ and $k+\alpha +1>0$. We have that
\begin{equation}\label{integral}
I_k^{a,\alpha ,\beta}=\left\|t^{k+a}\int_{-1}^1e^{-2t(1+s)}(1-s)^{k+\alpha }(1+s)^{k+\beta}ds\right\|_\mathbb{B}\leq C\frac{\Gamma (k+1)}{(k+1)^{\beta -2a+2}},
\end{equation}
for certain $C>0$ which does not depend on $k$.
\end{lema}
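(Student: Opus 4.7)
The plan is to square $I_k^{a,\alpha,\beta}$, bound the inner $s$-integral pointwise so that the resulting $L^2(0,\infty;dt/t)$ integral reduces to a Beta function in $t$, and then apply Stirling's asymptotics to the gamma quotients. The central trick is to convert the polynomial weight $(1-s)^{k+\alpha}$ into exponential decay in $1+s$ that can be combined with the factor $e^{-2t(1+s)}$.

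First, from $1-x\le e^{-x}$ applied to $x=(1+s)/2$ I have $1-s=2\bigl(1-(1+s)/2\bigr)\le 2e^{-(1+s)/2}$; raising to the $(k+\alpha)$-th power (valid whenever $k+\alpha\ge 0$, which holds for all but finitely many small $k$) yields
\[(1-s)^{k+\alpha}\le 2^{k+\alpha}\,e^{-(k+\alpha)(1+s)/2}.\]
Substituting this bound, changing variable to $u=1+s$, and extending the $u$-integration to $[0,\infty)$ collapses the inner integral into a standard gamma integral:
\[\int_{-1}^1 e^{-2t(1+s)}(1-s)^{k+\alpha}(1+s)^{k+\beta}\,ds\le \frac{2^{k+\alpha}\,\Gamma(k+\beta+1)}{[2t+(k+\alpha)/2]^{k+\beta+1}}.\]

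Next I take the $\mathbb{B}$-norm squared of $t^{k+a}$ times the right-hand side. After the change of variable $t=(k+\alpha)r/4$, the remaining $t$-integral is a Beta function:
\[\int_0^\infty \frac{t^{2(k+a)-1}}{[2t+(k+\alpha)/2]^{2(k+\beta+1)}}\,dt=C\,(k+\alpha)^{2(a-\beta-1)}\,\frac{\Gamma(2(k+a))\,\Gamma(2(\beta-a+1))}{\Gamma(2(k+\beta+1))},\]
where positivity of the Beta parameters is guaranteed by the hypotheses: $k+a>0$ follows from $k+2a>\beta+1>a$, and $\beta-a+1>0$ is directly assumed. After multiplying out and checking that the $4^{k+\alpha}$ factor cancels the $2^{-2k}$ arising from the substitution, one obtains
\[(I_k^{a,\alpha,\beta})^2\le C\,\Gamma(k+\beta+1)^2\,(k+\alpha)^{2(a-\beta-1)}\,\frac{\Gamma(2(k+a))}{\Gamma(2(k+\beta+1))},\]
with $C$ depending only on $a,\alpha,\beta$.

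To conclude, I apply standard Stirling-type estimates: $\Gamma(k+\beta+1)\le C(k+1)^\beta\,\Gamma(k+1)$, $(k+\alpha)^{2(a-\beta-1)}\le C(k+1)^{2(a-\beta-1)}$ (since $a-\beta-1<0$), and $\Gamma(2(k+a))/\Gamma(2(k+\beta+1))\le C(k+1)^{-2(\beta-a+1)}$. The three exponents of $(k+1)$ combine as $2\beta+2(a-\beta-1)-2(\beta-a+1)=4a-2\beta-4$, producing $\Gamma(k+1)^2/(k+1)^{2(\beta-2a+2)}$; a square root then gives the stated bound. The main obstacle is purely bookkeeping: one must carefully verify that the $k$-dependent powers of $2$ cancel and that the three Stirling exponents sum correctly. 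The finitely many edge cases with $-1<k+\alpha<0$ (forcing small $k$) are handled directly by splitting the $u$-integral at $u=1$, using $(2-u)^{k+\alpha}\le 1$ on $[0,1]$ and exploiting the $e^{-2t}$ decay on $[1,2]$; the bounded contributions so obtained are absorbed into $C$.
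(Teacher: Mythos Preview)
Your proof is correct and takes a genuinely different route from the paper. The paper squares $I_k^{a,\alpha,\beta}$, expands the square as a double integral in $(s,z)\in[-1,1]^2$, performs the $t$-integral first to obtain $\Gamma(2k+2a)/(2(2+s+z))^{2k+2a}$, and then after the substitution $u=(1+s)/2$, $v=(1+z)/2$ drops the factor $(1-v)^{k+\alpha}$ and extends the $v$-integration to $(0,\infty)$; two successive Beta-function evaluations then lead directly to the gamma quotient. Your approach instead bounds the \emph{single} $s$-integral pointwise via $(1-s)^{k+\alpha}\le 2^{k+\alpha}e^{-(k+\alpha)(1+s)/2}$, reducing it immediately to a gamma integral, and only afterwards squares and computes the $t$-integral as a Beta function. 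The paper's method is slightly cleaner in that it works uniformly for all admissible $k$ without separating large and small $k$ (though it too implicitly uses $(1-v)^{k+\alpha}\le 1$, hence $k+\alpha\ge 0$). Your method trades this for a transparent one-variable calculation; the small-$k$ edge cases you mention are harmless since, under the hypotheses, each $I_k^{a,\alpha,\beta}$ is finite (the inner integral is $O(1)$ as $t\to 0^+$ with $k+a>0$, and $O(t^{-(k+\beta+1)})$ as $t\to\infty$ with $\beta+1>a$), so the finitely many exceptional values are absorbed into $C$.
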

\begin{proof}
We write
\begin{align*}
(I_k^{a,\alpha ,\beta})^2&=\int_0^\infty t^{2k+2a-1}\left(\int_{-1}^1e^{-2t(1+s)}(1-s)^{k+\alpha}(1+s)^{k+\beta}ds\right)^2dt\\
&=\int_{-1}^1\int_{-1}^1(1-s)^{k+\alpha}(1+s)^{k+\beta}(1-z)^{k+\alpha}(1+z)^{k+\beta} \int_0^\infty t^{2k+2a-1}e^{-2t(2+s+z)}dtdsdz\\
&=\frac{\Gamma (2k+2a)}{2^{2k+2a}}\int_{-1}^1\int_{-1}^1\frac{(1-s)^{k+\alpha}(1+s)^{k+\beta}(1-z)^{k+\alpha}(1+z)^{k+\beta}}{(2+s+z)^{2k+2a}}dsdz\\
&=\frac{\Gamma (2k+2a)}{2^{4a-2\alpha -2\beta-2}}\int_0^1\int_0^1\frac{(1-u)^{k+\alpha}u^{k+\beta}(1-v)^{k+\alpha}v^{k+\beta}}{(u+v)^{2k+2a}}dudv\\
&\leq\frac{\Gamma (2k+2a)}{2^{4a-2\alpha -2\beta-2}}\int_0^1(1-u)^{k+\alpha}u^{k+\beta}\int_0^\infty \frac{v^{k+\beta}}{(u+v)^{2k+2a}}dvdu\\
&=\frac{\Gamma (k+\beta +1)\Gamma (k+2a-\beta-1)}{2^{4a-2\alpha -2\beta-2}}\int_0^1(1-u)^{k+\alpha}u^{2\beta-2a+1}du\\
&=\frac{\Gamma (k+\beta +1)\Gamma (k+2a-\beta-1)\Gamma (k+\alpha +1)\Gamma (2\beta -2a+2)}{2^{4a-2\alpha -2\beta-2}\Gamma (k-2a+\alpha+2\beta  +3)}\\
&\leq C_{a,\alpha, \beta}\frac{(\Gamma(k+1))^2}{(k+1)^{2\beta -4a+4}},
\end{align*}
for certain $C_{a,\alpha ,\beta }>0$, and then \eqref{integral} is obtained.
\end{proof}

\subsection{Proof of Theorem \ref{Th1} for $g_{W^\lambda }^1$} \label{s1}

Since $\Delta _\lambda $ is a bounded operator from $\ell ^p( \mathbb{N})$ into itself, $1\leq p\leq \infty$, it follows that, for every $f \in \ell ^p(\mathbb{N})$ and $k\in \mathbb{N}$, $\partial _t^kW_t^\lambda (f)=W_t^\lambda (\Delta _ \lambda ^kf)$, in the sense of derivative in $\ell ^p(\mathbb{N})$. Then, for every $f\in \ell ^p(\mathbb{N})$ and $n\in \mathbb{N}$, $W_t^\lambda (f)(n)$ is smooth in $(0,\infty )$ and $\partial_tW_t^\lambda (f)(n)=W_t^\lambda (\Delta _\lambda ^kf)(n)$, $k\in \mathbb{N}$.

To prove the $\ell ^p$-boundedness properties of the Littlewood-Paley $g_{W^\lambda}^1$ we will apply Theorem \ref{Th2.1}.

Let $G_\lambda$ the operator given by
$$
\begin{array}{c}
G_\lambda :\ell ^2(\mathbb{N}, w)\longrightarrow \ell ^2_\mathbb{B}(\mathbb{N})\\
\hspace{4.5cm}f\longrightarrow G_\lambda (f)(n;t):=t\partial _tW_t^\lambda (f)(n).
\end{array}
$$
As it was commented in Section \ref{S3}, $g_{W^\lambda  }^1$ is a bounded (sublinear) operator from $\ell ^2(\mathbb{N})$ into itself. Then, the operator $G_\lambda$ is bounded. Moreover, by defining $K_t^\lambda (n,m):=_\lambda \!\!\tau _n(t\partial _th_t^\lambda )(m)$, $n,m\in \mathbb{N}$ and $t>0$, we can write, for every $f\in \mathbb{C}_0^\mathbb{N}$,
$$
G_\lambda (f)(n;t)=\sum_{m\in \mathbb{N}}f(m)K_t^\lambda (n,m),\quad n\in \mathbb{N}\mbox{ and }t>0.
$$
Indeed,  for every $f\in \mathbb{C}_0^\mathbb{N}$, we have that
\begin{align*}
t\partial _tW_t^\lambda (f)(n)&=t\partial _t(f\#h_t^\lambda)(n)=\sum_{m\in \mathbb{N}}f(m)t\partial _t[_\lambda \tau _n(h_t^\lambda )(m)]
=\sum_{m\in \mathbb{N}}f(m)\sum_{k=|n-m|}^{n+m}c_\lambda (n,m,k)t\partial _t[h_t^\lambda (k)]\\
&=\sum_{m\in \mathbb{N}}f(m)_\lambda\tau _n(t\partial _th_t^\lambda )(m),\quad n,m\in \mathbb{N}\mbox{ and }t>0.
\end{align*}

Next, we establish that the kernel function $K_t(n,m)$, $n,m\in \mathbb{N}$, $t>0$, satisfies the following properties for a certain $C>0$ and each $n,m,\ell \in \mathbb{N}$, $n\not=m$:
\begin{align}
\label{4.1}&\|K_t^\lambda (n,m)\|_\mathbb{B}\leq \frac{C}{|n-m|};\\
\label{4.2}&\|K_t^\lambda (n,m)-K_t^\lambda (\ell ,m)\|_\mathbb{B}\leq C\frac{|n-\ell|}{|n-m|^2},\quad|n-m|>2|n- \ell|\mbox{ and }\frac{m}{2}\leq n,\ell \leq \frac{3m}{2};\\
\label{4.3}&\|K_t^\lambda (m,n)-K_t^\lambda (m,\ell )\|_\mathbb{B}\leq C\frac{|n-\ell|}{|n-m|^2},\quad|n-m|>2|n- \ell|\mbox{ and }\frac{m}{2}\leq n,\ell \leq \frac{3m}{2}.
\end{align}

We note that, since $K_t^\lambda (n,m)=K_t^\lambda (m,n)$, $n,m\in \mathbb{N}$, $t>0$, we only have to prove (\ref{4.1}) and (\ref{4.2}). For that, we use, by making suitable modifications some of the ideas in \cite[Section 5]{CGRTV}.

\begin{proof}[Proof of \eqref{4.1}] 
We denote by $\psi _t^\lambda (k):=t\partial _th_t^\lambda (k)$, $k\in \mathbb{N}$, $t>0$. Firstly, we show that, there exists $C>0$ such that, 
\begin{equation}\label{4.4}
\|\psi_t^\lambda (k)\|_\mathbb{B}\leq \frac{C}{(k+1)^{\lambda +1}},\quad k\in \mathbb{N}.
\end{equation}
Since (\cite[(5.7.9)]{Leb})
$$
2\frac{d}{dz}I_\nu (z)=I_{\nu -1}(z)+I_{\nu +1}(z),\quad z>0 \mbox{ and }\nu >0,
$$
and
$$
I_{\nu -1}(z)-I_{\nu +1}(z)=\frac{2\nu }{z}I_\nu (z),\quad z>0\mbox{ and }\nu >0,
$$
we deduce that
\begin{align}\label{descomht}
\psi_t^\lambda (k)&=\sqrt{\pi}\Gamma \Big(\lambda +\frac{1}{2}\Big)\sqrt{w_\lambda (k)}t\partial _t[e^{-2t}t^{-\lambda }I_{\lambda +k}(2t)]\nonumber\\
&=\sqrt{\pi}\Gamma \Big(\lambda +\frac{1}{2}\Big)\sqrt{w_\lambda (k)}te^{-2t}t^{-\lambda}
\Big[-\lambda t^{-1}I_{\lambda +k}(2t)-2I_{\lambda +k}(2t)+2\Big(\frac{d}{dt}I_{\lambda +k}\Big)(2t)\Big]\nonumber\\
&=\sqrt{\pi}\Gamma \Big(\lambda +\frac{1}{2}\Big)\sqrt{w_\lambda (k)}t^{1-\lambda }e^{-2t}\Big[\frac{k}{\lambda +k}I_{\lambda +k-1}(2t)-2I_{\lambda +k}(2t)+\frac{2\lambda +k}{\lambda +k}I_{\lambda +k+1}(2t)\Big]\\
&=\frac{k}{\lambda +k}\frac{\sqrt{w_\lambda (k)}}{\sqrt{w_\lambda (k-1)}}th_t^\lambda (k-1)-2th_t^\lambda (k)+\frac{2\lambda +k}{\lambda +k}\frac{\sqrt{w_\lambda (k)}}{\sqrt{w_\lambda (k+1)}}th_t^\lambda (k+1)\nonumber\\
&=\frac{k}{\lambda +k}\left[\frac{\sqrt{w_\lambda (k)}}{\sqrt{w_\lambda (k-1)}}th_t^\lambda (k-1)-th_t^\lambda (k)\right]\nonumber\\
&\quad +\frac{2\lambda +k}{\lambda +k}\left[\frac{\sqrt{w_\lambda (k)}}{\sqrt{w_\lambda (k+1)}}th_t^\lambda (k+1)-th_t^\lambda (k)\right],\quad k\in \mathbb{N},t>0,\label{descomht2}
\end{align}
where we assume $h_t^\lambda (-1):=0$, $t>0$.
Then, we can write
\begin{align}\label{4.5}
\|\psi_t^\lambda (k)\|_\mathbb{B}&\leq \frac{k}{\lambda +k}\frac{\sqrt{w_\lambda (k)}}{\sqrt{w_\lambda (k-1)}}\left\|th_t^\lambda (k-1)-\frac{\sqrt{w_\lambda (k-1)}}{\sqrt{w_\lambda (k)}}th_t^\lambda (k)\right\|_\mathbb{B}\nonumber\\
&\quad +\frac{2\lambda +k}{\lambda +k}\left\|\frac{\sqrt{w_\lambda (k)}}{\sqrt{w_\lambda (k+1)}}th_t^\lambda (k+1)-th_t^\lambda (k)\right\|_\mathbb{B}, \quad k\in \mathbb{N}.
\end{align}
Now, we are going to establish that
\begin{equation}\label{F}
L(k):=\left\|\frac{\sqrt{w_\lambda (k)}}{\sqrt{w_\lambda (k+1)}}th_t^\lambda (k+1)-th_t^\lambda (k)\right\|_\mathbb{B}\leq \frac{C}{(k+1)^{\lambda +1}}, \quad k>\lambda +\frac{5}{2}.
\end{equation}

By partial integration in (\ref{3.11}) we can obtain (\cite[(35)]{CGRTV})
\begin{equation}\label{4.6}
I_\nu (z)=\frac{z^{\nu -2}}{\sqrt{\pi}2^{\nu -2}\Gamma (\nu -3/2)}\int_{-1}^1\frac{e^{-zs}}{z}(1+zs)s(1-s^2)^{\nu -5/2}ds,\quad z>0\mbox{ and }\nu >\frac{3}{2}.
\end{equation}
By using (\ref{3.11}) and (\ref{4.6}) we have that
\begin{align*}
L(k)&=\sqrt{\pi }\Gamma \Big(\lambda +\frac{1}{2}\Big)\sqrt{w_\lambda (k)}\|e^{-2t}t^{1-\lambda }[I_{\lambda +k+1}(2t)-I_{\lambda +k}(2t)]\|_\mathbb{B}\\
&\leq C\frac{\sqrt{w_\lambda (k)}}{\Gamma (\lambda +k-1/2)}\left\|t^{k-1}\int_{-1}^1e^{-2t(1+s)}(1-s^2)^{\lambda +k-3/2}s(1+2t(1+s))ds\right\|_\mathbb{B}\\
&\leq C\frac{\sqrt{w_\lambda (k)}}{\Gamma (\lambda +k-1/2)}\left(\left\|t^{k-1}\int_{-1}^1e^{-2t(1+s)}(1-s^2)^{\lambda +k-3/2}(1+s)ds\right\|_\mathbb{B}\right.\\
&\quad \left.+\left\|t^k\int_{-1}^1e^{-2t(1+s)}(1-s^2)^{\lambda +k-3/2}(1+s)^2ds\right\|_\mathbb{B}\right).
\end{align*}

By taking into account \eqref{3.8} and Lemma \ref{tecnico} we can conclude that
$$
L(k)\leq \frac{\sqrt{w_\lambda (k)}k!}{\Gamma (\lambda +k-1/2)}\left(\frac{1}{(k+1)^{\lambda +7/2}}+\frac{1}{(k+1)^{\lambda +5/2}}\right)\leq \frac{C}{(k+1)^{\lambda +1}},\quad k>\lambda +\frac{5}{2},
$$
and (\ref{F}) is proved.

Then, by using (\ref{3.8}) and (\ref{F}), we obtain
\begin{equation}\label{c1}
\|\psi_t^\lambda (k)\|_\mathbb{B}\leq \frac{k}{\lambda +k}\frac{\sqrt{w_\lambda (k)}}{\sqrt{w_\lambda (k-1)}}L(k-1)+\frac{2\lambda +k}{\lambda +k}L(k)\leq \frac{C}{(k+1)^{\lambda +1}},\quad \mbox{ when }k>\lambda +\frac{7}{2}.
\end{equation}
On the other hand, \cite[(5.16.4) and (5.11.10)]{Leb} say that, for every $\nu >0$,
\begin{equation}\label{4.7}
I_\nu (z)\sim \frac{z^\nu }{2^\nu \Gamma (\nu +1)},\quad \mbox{ as }z\rightarrow 0^+,
\end{equation}
and, for every $n\in \mathbb{N}$,
\begin{equation}\label{4.8}
e^{-z}\sqrt{z}I_\nu (z)=\frac{1}{\sqrt{2\pi }}\left(\sum_{r=0}^n(-1)^r[\nu ,r](2z)^{-r}+\mathcal{O}(z^{-n-1})\right),
\end{equation}
where $[\nu ,0]=1$, and
$$
[\nu ,r]=\frac{(4\nu ^2-1)(4\nu ^2-3^2)\cdots(4\nu ^2-(2r-1)^2)}{2^{2r}\Gamma (r+1)},\quad r\in \mathbb{N}\setminus\{0\}.
$$

From (\ref{4.7}) we deduce that, for every $k\in \mathbb{N}$, there exists $C_k>0$ such that
\begin{equation}\label{F1}
|th_t^\lambda (k)|\leq C_kt^{k+1},\quad t\in (0,1).
\end{equation}
By (\ref{descomht}) and (\ref{4.8}) it follows that
\begin{align}\label{F2}
\psi _t^\lambda (k)&=\sqrt{\pi }\Gamma \Big(\lambda +\frac{1}{2}\Big)\sqrt{w_\lambda (k)}t^{1-\lambda }e^{-2t}\left[\frac{k}{\lambda +k}I_{\lambda +k-1}(2t)-2I_{\lambda+k}(2t)+\frac{2\lambda +k}{\lambda +k}I_{\lambda +k+1}(2t)\right]\nonumber\\
&=\frac{1}{2}\Gamma \Big(\lambda +\frac{1}{2}\Big)\sqrt{w_\lambda (k)}t^{1/2-\lambda }\left[\frac{k}{\lambda +k}-2+\frac{2\lambda +k}{\lambda +k}+O\Big(\frac{1}{t}\Big)\right]\nonumber\\
&=O(t^{-1/2-\lambda }),\quad t>0\mbox{ and }k\in \mathbb{N}.
\end{align}

By combining \eqref{descomht2}, \eqref{F1} and \eqref{F2} we obtain, for every $k\in \mathbb{N}$, a constant $C_k>0$ such that
$$
\|\psi _t^\lambda (k)\|_\mathbb{B}\leq C_k.
$$

This estimate, jointly (\ref{c1}), gives (\ref{4.4}) with a constant $C>0$ that does not depend on $k$.

By considering (\ref{3.9}) (for $\alpha =0$) and (\ref{4.4}) we can write
$$
\|K_t^\lambda (n,m)\|_\mathbb{B}\leq \sum_{k=|n-m|}^{n+m}c_\lambda (n,m,k)\|\psi_t^\lambda (k)\|_\mathbb{B}\leq \sum_{k=0}^\infty \frac{c_\lambda (n,m,k)}{(k+1)^{\lambda +1}}\leq \frac{C}{|n-m|},
$$
and (\ref{4.1}) is established.
\end{proof}

\begin{proof}[Proof of \eqref{4.2}]
In order to show \eqref{4.2}, we are going to proceed as in the proof of (\ref{3.3}). Thus we need to justify the following estimations:
\begin{align*}
(B1) &\hspace{1cm}\|K_t ^\lambda (n+1,m)-K_t^\lambda (n,m)\|_\mathbb{B}\leq \frac{C}{|n-m|^2},\quad n,m\in \mathbb{N},n>m;\\
(B2) &\hspace{1cm} \|K_t^\lambda (n,m)-K_t^\lambda (n-1,m)\|_\mathbb{B}\leq \frac{C}{|n-m|^2},\quad n,m\in \mathbb{N},\,1\leq n<m\leq 2n.
\end{align*}

Let $n,m\in \mathbb{N}$ and $n>m$. As in the proof of (\ref{3.3}) we can write
\begin{equation}\label{KI1I2}
K_t^\lambda (n+1,m)-K_t^\lambda (n,m)= I_1(n,m,t)+I_2(n,m,t), \quad t>0,
\end{equation}
where
$$
I_1(n,m,t):=\sum_{k=n-m}^{n+m}c_\lambda (n+1,m,k+1)\Big[\psi_t^\lambda (k+1)-\frac{\sqrt{w_\lambda (k+1)}}{\sqrt{w_\lambda (k)}}\psi_t^\lambda (k)\Big],\quad t>0,
$$
and 
$$
I_2(n,m,t):=\sum_{k=n-m}^{n+m}\Big[\frac{\sqrt{w_\lambda (k+1)}}{\sqrt{w_\lambda (k)}}c_\lambda (n+1,m,k+1)-c_\lambda (n,m,k)\Big]\psi_t^\lambda (k)
,\quad t>0.
$$

By using the estimations (\ref{calphaj}), (\ref{4.4})  and \eqref{3.9} (for $\alpha =0$) we deduce that
\begin{equation}\label{I2g}
\|I_2(n,m,\cdot)\|_\mathbb{B}\leq \frac{C}{n-m}\sum_{k=n-m}^{n+m}\frac{c_\lambda(n,m,k)}{(k+1)^{\lambda +1}}\leq \frac{C}{|n-m|^2}.
\end{equation}

On the other hand, if we show that
\begin{equation}\label{psidif}
\left\|\psi_t^\lambda (k+1)-\frac{\sqrt{w_\lambda (k+1)}}{\sqrt{w_\lambda (k)}}\psi_t^\lambda (k)\right\|_\mathbb{B}\leq \frac{C}{(k+1)^{\lambda +2}},\quad k\in \mathbb{N},
\end{equation}
we can write, by using also (\ref{3.9}) (for $\alpha =1/2$), 
$$
\|I_1(n,m,\cdot)\|_\mathbb{B}\leq C\sum_{k=n-m}^{n+m}\frac{c_\lambda(n+1,m,k+1)}{(k+1)^{\lambda +2}}\leq \frac{C}{|n-m|^2},
$$
which, jointly (\ref{KI1I2}) and \eqref{I2g}), gives property $(B1)$.

Let us justify (\ref{psidif}). 
By using (\ref{3.11}) and (\ref{4.6}) we get
\begin{align*}
\left\|\psi_t^\lambda (k+1)-\frac{\sqrt{w_\lambda (k+1)}}{\sqrt{w_\lambda (k)}}\psi_t^\lambda (k)\right\|_\mathbb{B}&\\
&\hspace{-5cm}=\sqrt{\pi }\Gamma \Big(\lambda +\frac{1}{2}\Big)\sqrt{w_\lambda (k+1)}\left\|t\partial _t[e^{-2t}t^{-\lambda}(I_{\lambda +k+1}(2t)-I_{\lambda +k}(2t)]\right\|_\mathbb{B}\\
&\hspace{-5cm}\leq C\frac{\sqrt{w_\lambda (k+1)}}{\Gamma (\lambda +k-1/2)}\left\|t\partial _t\Big[t^{k-2}\int_{-1}^{1}e^{-2t(1+s)}(1-s^2)^{\lambda +k-3/2}s(1+2t(1+s))ds\Big]\right\|_\mathbb{B}\\
&\hspace{-5cm}\leq C\frac{\sqrt{w_\lambda (k+1)}}{\Gamma (\lambda +k-1/2)}\left(\left\|(k-2)t^{k-2}\int_{-1}^{1}e^{-2t(1+s)}(1-s^2)^{\lambda +k-3/2}s(1+2t(1+s))ds\Big]\right\|_\mathbb{B}\right.\\
&\hspace{-5cm}\quad +\left\|t^{k-1}\int_{-1}^{1}e^{-2t(1+s)}(1-s^2)^{\lambda +k-3/2}s(1+s)(1+2t(1+s))ds\right\|_\mathbb{B}\\
&\hspace{-5cm}\quad \left.+\left\|t^{k-1}\int_{-1}^{1}e^{-2t(1+s)}(1-s^2)^{\lambda +k-3/2}s(1+s)ds\right\|_\mathbb{B}\right)\\
&\hspace{-5cm}\leq C\frac{\sqrt{w_\lambda (k+1)}}{\Gamma (\lambda +k-1/2)}\left(\left\|(k+1)t^{k-2}\int_{-1}^{1}e^{-2t(1+s)}(1-s^2)^{\lambda +k-3/2}(1+s)ds\right\|_\mathbb{B}\right.\\
&\hspace{-5cm}\quad +\left\|(k+1)t^{k-1}\int_{-1}^{1}e^{-2t(1+s)}(1-s^2)^{\lambda +k-3/2}(1+s)^2ds\right\|_\mathbb{B}\\
&\hspace{-5cm}\quad +\left.\left\|t^k\int_{-1}^{1}e^{-2t(1+s)}(1-s^2)^{\lambda +k-3/2}(1+s)^3ds\right\|_\mathbb{B}\right).
\end{align*}

Now, by using estimate \eqref{3.8} and Lemma \ref{tecnico} we obtain
\begin{align*}
\left\|\psi_t^\lambda (k+1)-\frac{\sqrt{w_\lambda (k+1)}}{\sqrt{w_\lambda (k)}}\psi_t^\lambda (k)\right\|_\mathbb{B}&\leq C\frac{(k+1)^\lambda k!}{\Gamma (\lambda +k-1/2)}\left(\frac{1}{(k+1)^{\lambda +9/2}}+\frac{1}{(k+1)^{\lambda +7/2}}\right)\\
&\hspace{-3cm}\leq C\frac{k!}{\Gamma (\lambda +k-1/2)(k+1)^{7/2}}\leq \frac{C}{(k+1)^{\lambda +2}},\quad \mbox{ when }k >\lambda +9/2. 
\end{align*}

Also, by taking into account (\ref{4.4}), if $k\in \mathbb{N}$, $k\leq\lambda +9/2$,
\begin{align*}
\left\|\psi_t^\lambda (k+1)-\frac{\sqrt{w_\lambda (k+1)}}{\sqrt{w_\lambda (k)}}\psi_t^\lambda (k)\right\|_\mathbb{B}&\leq C(\|\psi_t^\lambda (k+1)\|_\mathbb{B}+\|\psi _t^\lambda (k)\|_\mathbb{B})\\
&\leq \frac{C}{(k+1)^\lambda }\leq C\leq \frac{C}{(k+1)^{\lambda +2}},
\end{align*}
and thus, \eqref{psidif} is established.

Property $(B2)$ can be obtained by proceeding as in the proof of $(A2)$, and using \eqref{4.4} and \eqref{psidif}. Moreover, $(B1)$ and $(B2)$ lead to \eqref{4.2} in the same way as \eqref{3.3} was proved from $(A1)$ and $(A2)$.

\end{proof}

According to Theorem \ref{Th1} we deduce that for every $1\leq p<\infty$ and $w\in A_p(\mathbb{N})$ the operator $G_\lambda $ can be extended from $\ell ^2(\mathbb{N})\cap \ell ^p(\mathbb{N},w)$ to $\ell ^p(\mathbb{N},w)$ as a bounded operator from $\ell ^p(\mathbb{N},w)$ into $\ell ^p_\mathbb{B}(\mathbb{N},w)$, when $1<p<\infty$ and from $\ell ^1(\mathbb{N},w)$ into $\ell ^{1,\infty }_\mathbb{B}(\mathbb{N},w)$. Let us denote by $\mathbb{T}$ to this extension. Our objective now is to show that 
$$
[\mathbb{T}(f)(n)](t)=t\partial _tW_t(f)(n), \quad n\in \mathbb{N} \mbox{ and }t\in (0,\infty )\setminus E,
$$
for certain $E\subset (0,\infty )$ with $|E|=0$. 
Thus, we can conclude that the Littlewood-Paley function $g_{W^\lambda }^1$ is bounded from $\ell ^p(\mathbb{N},w)$ into itself, when $1<p<\infty$ and from $\ell ^1(\mathbb{N},w)$ into $\ell ^{1,\infty }(\mathbb{N},w)$.

Let $1<p<\infty$ and $w\in A_p(\mathbb{N})$. Suppose $f\in \ell ^p(\mathbb{N},w)$ and that $(f_\ell )_{\ell =0}^\infty $ is a sequence in $\mathbb{C}_0^\mathbb{N}$ for which $f_\ell \longrightarrow f$, as $\ell \rightarrow \infty$, in $\ell ^p(\mathbb{N},w)$. 
We have that
$$
G_\lambda (f_\ell )\longrightarrow \mathbb{T}(f),\quad \mbox{ as }\ell \rightarrow \infty ,\mbox{ in }\ell _\mathbb{B} ^p(\mathbb{N},w).
$$
Then, for every $n\in \mathbb{N}$,
$$
\int_0^\infty |t\partial _tW_t^\lambda (f_\ell )(n)-[\mathbb{T}(f)(n)](t)|^2\frac{dt}{t}\longrightarrow 0,\quad \mbox{ as }\ell \rightarrow \infty.
$$
Let $n\in \mathbb{N}$. There exists an increasing sequence $\phi:\mathbb{N}\longrightarrow \mathbb{N}$ and a measurable set $E\subset (0,\infty )$ with zero Lebesgue measure such that, for every $t\in (0,\infty )\setminus E$,
$$
t\partial _tW_t^\lambda (f_{\phi (\ell )})(n)\longrightarrow [\mathbb{T}(f)(n)](t),\quad \mbox{ as }\ell \rightarrow \infty.
$$

We consider the following maximal operators
$$
W_*^{\lambda ,-}(g)(\ell ):=\sup_{t>0}\left|\sum _{m=0}^\infty g(m)\sum_{r=1}^\infty c_\lambda (\ell ,m,r)h_t^\lambda (r-1)\right|,\quad \ell \in \mathbb{N},
$$
and
$$
W_*^{\lambda ,+}(g)(\ell ):=\sup_{t>0}\left|\sum _{m=0}^\infty g(m)\sum_{r=1}^\infty c_\lambda (\ell ,m,r)h_t^\lambda (r+1)\right|,\quad \ell \in \mathbb{N}.
$$
By proceeding as in Section \ref{S3} we can prove that these maximal operators are bounded from $\ell ^p(\mathbb{N},w)$ into itself.
Also, by \eqref{1.1}, \eqref{5prime} and \eqref{3.1} we have that
$$
|t\partial _tW_t^\lambda (g)(n)|\leq Ct[W_*^\lambda (|g|)(n)+W_*^{\lambda ,+}(|g|)(n)+W_*^{\lambda ,-}(|g|)(n)],\quad t>0.
$$

Let $t_0\in (0,\infty)$. Since $W_*^\lambda $, $W_*^{\lambda ,+}$ and $W_*^{\lambda , -}$ are bounded operators from $\ell ^p(\mathbb{N},w)$ into itself, it follows that
$$
[t\partial _tW_t^\lambda (f_{\phi (\ell )})(n)]_{|t=t_0}\longrightarrow [t\partial _tW_t^\lambda (f)(n)]_{|t=t_0},\quad \mbox{ as }\ell \rightarrow \infty.
$$
Hence, for every $t\in (0,\infty )\setminus E$, $[\mathbb{T}(f)(n)](t)=t\partial _tW_t^\lambda (f)(n)$.

In a similar way we can prove that $g_{W^\lambda }^1$ is bounded from $\ell ^1(\mathbb{N},w)$ into $\ell ^{1,\infty }(\mathbb{N},w)$.

\vspace*{0.2cm}

\subsection{ Proof of Theorem \ref{Th1} for $g_{W^\lambda }^k$, $k>1$}\label{s2}
In order to show that Littlewood-Paley functions $g_{W^\lambda }^k$, $k>1$, are bounded from $\ell ^p(\mathbb{N},w)$ into itself, when $1<p<\infty$ and $w\in A_p(\mathbb{N})$, we use a reduction argument that we learnt from Professor C. Segovia and J.L. Torrea.

We need the following form of a Krivine's result (\cite[Theorem 1.f.14, p. 93]{LT}).

\begin{teo}\label{Th4.1}
Let $\mathbb{H}_i$, $i=1,2$, be Hilbert spaces and let $(\Omega,\mathcal{A},\mu )$ be a measure space. Assume that $1<p<\infty$, $w\in A_p(\mathbb{N})$ and $T$ is a bounded operator from $\ell _{\mathbb{H}_1}^p(\mathbb{N}, w)$ into $\ell _{\mathbb{H}_2}^p(\mathbb{N},w)$. We define the operator $\widetilde{T}$ by
$$
\widetilde{T}(F)(n,\theta )=T(F(\cdot , \theta ))(n),\quad n\in \mathbb{N}\mbox{ and }\theta \in \Omega,
$$
for every $F:\mathbb{N}\times \Omega \longrightarrow \mathbb{H}_1$ such that $F(\cdot , \theta )\in \ell ^p_{\mathbb{H}_1}(\mathbb{N},w)$, for every $\theta \in \Omega $. Then, $\widetilde{T}$ is a bounded operator from $\ell _{L^2_{\mathbb{H}_1}(\Omega )}^p(\mathbb{N},w)$ into $\ell _{L^2_{\mathbb{H}_2}(\Omega )}^p(\mathbb{N},w)$.
\end{teo}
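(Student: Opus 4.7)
The plan is to reduce the $L^2(\Omega)$-valued statement to a uniform square-function estimate for finite families of $\mathbb{H}_1$-valued sequences, and then to invoke the cited Lindenstrauss--Tzafriri result. First, by density and monotone convergence it suffices to prove the bound for simple $F$ of the form
$$F(n,\theta)=\sum_{j=1}^{N}F_j(n)\,e_j(\theta),$$
where $\{e_j\}_{j=1}^{N}$ is an orthonormal family in $L^2(\Omega)$ and each $F_j\in\ell^p_{\mathbb{H}_1}(\mathbb{N},w)$. Using orthonormality,
$$\|F(n,\cdot)\|_{L^2_{\mathbb{H}_1}(\Omega)}^{2}=\sum_{j=1}^{N}\|F_j(n)\|_{\mathbb{H}_1}^{2},\qquad \|\widetilde{T}F(n,\cdot)\|_{L^2_{\mathbb{H}_2}(\Omega)}^{2}=\sum_{j=1}^{N}\|TF_j(n)\|_{\mathbb{H}_2}^{2},$$
so the claim collapses to the square-function inequality
$$\Big\|\Big(\sum_{j=1}^{N}\|TF_j\|_{\mathbb{H}_2}^{2}\Big)^{1/2}\Big\|_{\ell^p(\mathbb{N},w)}\le C\Big\|\Big(\sum_{j=1}^{N}\|F_j\|_{\mathbb{H}_1}^{2}\Big)^{1/2}\Big\|_{\ell^p(\mathbb{N},w)},$$
with $C$ independent of $N$.

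Second, this inequality is exactly the form of the Krivine--Maurey $\ell^{2}$-extension for an operator acting between $L^{p}$-spaces of Hilbert-valued functions. To apply Theorem 1.f.14 of \cite{LT}, I would fix orthonormal bases $\{\xi_\alpha\}$ of $\mathbb{H}_1$ and $\{\eta_\beta\}$ of $\mathbb{H}_2$ to realize $\ell^p_{\mathbb{H}_i}(\mathbb{N},w)$ as a weighted mixed-norm $\ell^{p}(\ell^{2})$ lattice, which is a Banach lattice in the sense required by Krivine's theorem. The operator $T$ between these lattices then extends with constant $C_p \|T\|$ to the $\ell^{2}_{N}$-valued versions; passing the norm bound through the canonical isometry $\mathbb{H}_i\otimes_{2}\ell^{2}_{N}\cong L^2_{\mathbb{H}_i}(\Omega_N)$, where $\Omega_N$ is the span of $\{e_j\}_{j=1}^{N}$ in $L^{2}(\Omega)$, yields precisely the desired bound, uniformly in $N$.

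Third, letting $N\to\infty$ and exhausting $L^2(\Omega)$ by finite-dimensional subspaces, together with Fatou's lemma in $\ell^{p}(\mathbb{N},w)$, gives the full inequality $\|\widetilde{T}F\|_{\ell^{p}_{L^{2}_{\mathbb{H}_2}(\Omega)}(\mathbb{N},w)}\le C\|F\|_{\ell^{p}_{L^{2}_{\mathbb{H}_1}(\Omega)}(\mathbb{N},w)}$ for all $F$ with $F(\cdot,\theta)\in\ell^p_{\mathbb{H}_1}(\mathbb{N},w)$ for every $\theta$.

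The main obstacle I expect is the \emph{lattice reduction}: Krivine's theorem is classically stated for Banach lattices, and while $\ell^p(\mathbb{N},w)$ is one, the Hilbert-valued spaces $\ell^p_{\mathbb{H}_i}(\mathbb{N},w)$ are not lattices in their own right. Justifying the application therefore requires either handling the $\mathbb{H}_i$ directions via an orthonormal basis (thereby embedding the problem into a scalar weighted $\ell^{p}(\ell^{2})$ lattice where Theorem 1.f.14 applies directly) or invoking the known Hilbert-valued extension of Krivine's inequality with a universal constant. Once this identification is in place, the rest is a routine tensor-product and density argument.
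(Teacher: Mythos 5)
Your proposal is correct and is essentially the paper's approach: the paper offers no proof of Theorem \ref{Th4.1}, simply presenting it as a form of Krivine's theorem via the citation to \cite[Theorem 1.f.14, p.~93]{LT}, and your argument (orthonormal expansion in $L^2(\Omega)$ reducing the claim to a uniform square-function estimate, plus realizing $\ell^p_{\mathbb{H}_i}(\mathbb{N},w)$ as a weighted mixed-norm $\ell^p(\ell^2)$ Banach lattice so that Krivine's inequality applies with constant $K_G\|T\|$ independent of $N$) is exactly the standard derivation the paper leaves implicit. The lattice-reduction issue you flag is real but resolved precisely as you indicate, so no gap remains.
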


Let $k\in \mathbb{N}$ and suppose that $g_{W^\lambda }^k$ is bounded from $\ell ^p(\mathbb{N},w)$ into itself, where $1<p<\infty$ and $w\in A_p(\mathbb{N})$. We will show that $g_{W^\lambda}^{k+1}$ is also bounded from $\ell ^p(\mathbb{N},w)$ into itself.
 
For every $m\in \mathbb{N}$ we define the operator
\begin{align*}
G_\lambda ^m:\ell ^p(\mathbb{N},w)\longrightarrow \ell _B^p(\mathbb{N}, w)&\\
&\hspace{-3cm}f\longrightarrow G_\lambda ^m(f)(n;t):=t^m\partial _t^mW_t^\lambda (f)(n).
\end{align*}

We have that $G_\lambda ^1$ and $G_\lambda ^k$ are bounded from $\ell ^p(\mathbb{N},w)$ into $\ell _\mathbb{B}^p(\mathbb{N},w)$. We also consider the operator $\widetilde{G_\lambda^1}$ defined by
$$
\widetilde{G_\lambda ^1}(F)(n,\theta ;t)=G_\lambda ^1(F(\cdot ,\theta ))(n;t),\quad n\in \mathbb{N},\;\theta ,\;t\in (0,\infty ).
$$
According to Theorem \ref{Th4.1} for $\mathbb{H}_1=\mathbb{C}$ and $\mathbb{H}_2=\mathbb{B}$, the operator $\widetilde{G_\lambda ^1}$ is bounded from $\ell ^p_\mathbb{B}(\mathbb{N},w)$ into $\ell _{L^2_\mathbb{B}((0,\infty ),d\theta /\theta)}(\mathbb{N},w)$.

A straightforward manipulation (see, for instance, \cite[Proposition 2.5]{BFRTT} leads to
$$
\|G_\lambda ^{k+1}(f)(n;\cdot)\|_\mathbb{B}=\sqrt{2k(2k+1)}\|\widetilde{G_\lambda ^1}[G_\lambda ^k(f)](n,\cdot; \cdot )\|_{L^2_\mathbb{B}((0,\infty ),d\theta/\theta)},\quad f\in \mathbb{C}_0^\mathbb{N}.
$$
Hence, $G_\lambda ^{k+1}$ defines a bounded operator from $\ell ^p(\mathbb{N},w)$ into $\ell _\mathbb{B}^p(\mathbb{N},w)$.

Thus, we have established that $g_{W^\lambda }^k$ is bounded from $\ell ^p(\mathbb{N},w)$ into $\ell _\mathbb{B}^p(\mathbb{N},w)$, for every $k\in \mathbb{N}$.

\begin{remark}
The above argument does not allow us to obtain the weak (1,1) boundedness for the Littlewood-Paley function $g_{W^\lambda }^k$ when $k>1$. In order to get the $\ell ^1$-boundedness properties for $g_{W^\lambda }^k$  we need to obtain a treatable expression for $\partial _t^kh_t^\lambda $, $t\in (0,\infty )$, for every $k\in \mathbb{N}$. We can not get this general form for $\partial _t^kh_t^\lambda$, $t\in (0,\infty )$.
\end{remark}

\subsection{Proof of Theorem \ref{Th1} for $g_{P^\lambda }^k$}
Let $k\in \mathbb{N}\setminus\{0\}$. 
We consider the operator
\begin{align*}
Q^{\lambda ,k}:\ell ^2(\mathbb{N})\longrightarrow \ell _\mathbb{B}^2(\mathbb{N})&\\
&\hspace{-2cm}f\longrightarrow Q^{\lambda ,k}(f)(n;t):=t^k\partial _t^kP_t^\lambda (f)(n).
\end{align*}

According to (\ref{5prime}) and (\ref{1.5}) we have that, for every $f\in \mathbb{C}_0^\mathbb{N}$,
$$
P_t^\lambda (f)(n)=\frac{1}{\sqrt{\pi }}\int_0^\infty \frac{e^{-u}}{\sqrt{u}}W_{\frac{t^2}{4u}}^\lambda (f)(n)du=\sum_{m=0}^\infty f(m) M_t^\lambda (n,m),\quad n\in \mathbb{N}\mbox{ and }t>0,
$$
where
$$
M_t^\lambda(n,m):=\frac{1}{\sqrt{\pi }}\int_0^\infty \frac{e^{-u}}{\sqrt{u}}\;_\lambda \tau _n(h_{t^2/(4u)}^\lambda )(m)du,\quad n,m\in \mathbb{N}\mbox{ and }t>0.
$$

By making a change of variables we can write
$$
M_t^\lambda (n,m)=\frac{t}{2\sqrt{\pi }}\int_0^\infty \frac{e^{-t^2/(4u)}}{u^{3/2}}\;_\lambda \tau _n(h_u^\lambda )(m)du=\;_\lambda \tau _n\left[\frac{1}{2\sqrt{\pi }}\int_0^\infty \frac{te^{-t^2/(4u)}}{u^{3/2}}h_u^\lambda (k)du\right](m).
$$

For every $t>0$ we define $\Psi_t^{\lambda ,k}$ by 
$$
\Psi _t^{\lambda ,k}(\ell ):=\frac{t^k}{2\sqrt{\pi }}\int_0^\infty \frac{\partial _t^k[te^{-t^2/(4u)}]}{u^{3/2}}h_u^\lambda (\ell )du,\quad \ell \in \mathbb{N}.
$$
Then, for every $f\in \mathbb{C}_0^\mathbb{N}$, we have that
$$
t^k\partial _t^kP_t^\lambda (f)(n)=\sum_{m=0}^\infty f(m)Q_t^{\lambda , k}(n,m),\quad n\in \mathbb{N}\mbox{ and }t>0,
$$
where
$$
Q_t^{\lambda , k}(n,m):=\;_\lambda \tau _n(\Psi _t^{\lambda ,k})(m),\quad n,m\in \mathbb{N}.
$$

By using spectral theory we can see that $Q^{\lambda , k}$ is a bounded operator from $\ell ^2(\mathbb{N})$ into $\ell ^2_\mathbb{B}(\mathbb{N})$. Indeed, let $f\in \ell ^2(\mathbb{N})$. We have that
$$
\mathcal{F}_\lambda (P_t^\lambda (f))(x)=e^{-\sqrt{2(1-x)}t}\mathcal{F}_\lambda (f)(x),\quad x\in (-1,1),\;t>0.
$$

Plancherel equality leads to
\begin{align*}
\sum_{n=0}^\infty \|Q^{\lambda ,k}(f)(n;\cdot )\|_\mathbb{B}^2&=\int_0^\infty \sum_{n=0}^\infty |Q^{\lambda ,k}(f)(n;t)|^2\frac{dt}{t}\\
&=\int_0^\infty \int_{-1}^1(t\sqrt{2(1-x)})^{2k}e^{-2\sqrt{2(1-x)}t}|\mathcal{F}_\lambda (f)(x)|^2dx\frac{dt}{t}\\
&=\frac{\Gamma (2k)}{2^{2k}}\int_{-1}^1|\mathcal{F}_\lambda (f)(x)|^2dx=\frac{\Gamma (2k)}{2^{2k}}\sum_{n=0}^\infty |f(n)|^2.
\end{align*}

We are going to see that there exists $C>0$ such that, for every  $n,m,\ell \in \mathbb{N}, \;n\not=m$, 
\begin{equation}\label{4.161}
\|Q_t^{\lambda ,k}(n,m)\|_\mathbb{B}\leq \frac{C}{|n-m|};
\end{equation}
\begin{equation}\label{4.162}
\|Q_t^{\lambda ,k}(n,m)-Q_t^{\lambda ,k}(\ell ,m)\|_\mathbb{B}\leq C\frac{|n-\ell |}{|n-m|^2},\quad |n-m|>2|n-\ell|,\mbox{ and }\frac{m}{2}\leq n,\ell \leq \frac{3m}{2},
\end{equation}
and
\begin{equation}\label{4.163}
\|Q_t^{\lambda ,k}(m,n)-Q_t^{\lambda ,k}(m,\ell )\|_\mathbb{B}\leq C\frac{|n-\ell |}{|n-m|^2},\quad |n-m|>2|n-\ell|,\mbox{ and }\frac{m}{2}\leq n,\ell \leq \frac{3m}{2}.
\end{equation}

In order to obtain these estimations we can proceed as in the heat case in Section \ref{s1}. Thus, we only need to establish analogous properties to \eqref{4.4} and \eqref{psidif} for $\Psi_t^{\lambda ,k}$, that is,
\begin{equation}\label{acotPsi}
\|\Psi _t^{\lambda ,k}(\ell )\|_\mathbb{B}\leq \frac{C}{(\ell +1)^{\lambda +1}},\quad \ell \in \mathbb{N}\setminus \{0\},
\end{equation}
and
\begin{equation}\label{Psidif}
\left\|\Psi _t^{\lambda ,k}(\ell +1)-\frac{\sqrt{w_\lambda (\ell +1)}}{\sqrt{w_\lambda (\ell )}}\Psi_t^{\lambda , k}(\ell )\right\|_\mathbb{B}\leq \frac{C}{(\ell +1)^{\lambda +2}},\quad \ell \in \mathbb{N}\setminus \{0\}.
\end{equation}
\begin{remark}
Actually, since we assume $n,m\in \mathbb{N}$, $n\not=m$, and 
$$
\;_\lambda \tau _n(\Psi _t^{\lambda ,k})=\sum_{\ell=|n-m|}^{n+m}c_\lambda (n,m,\ell)\Psi _t^{\lambda ,k}(\ell ),
$$
we only need to establish properties \eqref{acotPsi} and \eqref{Psidif} for $\ell \in \mathbb{N}\setminus \{0\}$. 
\end{remark}

By using Fa di Bruno's formula (\cite[Lemma 4.3, (4.6)]{GLLNU}) we can obtain (see \cite[Lemma 4]{BCCFR})
$$
|\partial _t^k[te^{-t^2/(4u)}]|\leq Ce^{-t^2/(8u)}u^{(1-k)/2},\quad t,u\in (0,\infty ).
$$
It follows that
\begin{equation}\label{4.17}
\|t^k\partial _t^k[te^{-t^2/(4u)}]\|_\mathbb{B}\leq Cu^{(1-k)/2}\left(\int_0^\infty t^{2k-1}e^{-t^2/(4u)}dt\right)^{1/2}\leq C\sqrt{u},\quad u>0.
\end{equation}

Assume $\ell \in \mathbb{N}$, $\ell \geq 1$. Let us prove \eqref{acotPsi}. By the Minkowski integral's inequality and (\ref{3.5}) we can write
\begin{align*}
 \|\Psi _t^{\lambda ,k}(\ell )\|_\mathbb{B}&\leq C\int_0^\infty \|t^k\partial _t^k[te^{-t^2/(4u)}]\|_\mathbb{B}\frac{h_u^\lambda (\ell )}{u^{3/2}}du\leq C\sqrt{w_\lambda (\ell )}\int_0^\infty e^{-2u}u^{-\lambda -1}I_{\lambda +\ell }(2u)du\\
&\leq C\frac{\sqrt{w_\lambda (\ell )}}{\Gamma (\lambda +\ell +1/2)}\int_0^\infty u^{\ell -1}\int_{-1}^1e^{-2u(1+s)}(1-s^2)^{\lambda +\ell -1/2}dsdu\\
&\leq C\frac{\sqrt{w_\lambda (\ell )}\Gamma (\ell)}{2^\ell \Gamma (\lambda +\ell +1/2)}\int_{-1}^1(1-s^2)^{\lambda +\ell -1/2}(1+s)^{-\ell }ds=C\frac{\sqrt{w_\lambda (\ell )}\Gamma (\ell)}{\Gamma (2\lambda+ \ell +1)}\leq \frac{C}{(\ell +1)^{\lambda +1}}.
\end{align*}

In order to prove \eqref{Psidif} we use (\ref{3.5}) and (\ref{3.11})  and again (\ref{4.17}) to get
\begin{align*}
\left\|\Psi _t^{\lambda ,k}(\ell +1)-\frac{\sqrt{w_\lambda (\ell +1)}}{\sqrt{w_\lambda (\ell )}}\Psi_t^{\lambda , k}(\ell )\right\|_\mathbb{B}&\leq C\sqrt{w_\lambda (\ell +1)}\int_0^\infty  e^{-2u}u^{-\lambda -1}|I_{\lambda +\ell +1}(2u)-I_{\lambda +\ell }(2u)|du\\
&\hspace{-3cm}\leq C\frac{\sqrt{w_\lambda (\ell +1)}}{\Gamma (\lambda +\ell +1/2)}\int_0^\infty u^{\ell -1}\int_{-1}^1e^{-2u(1+s)}(1+s)(1-s^2)^{\lambda +\ell -1/2}dsdu\\
&\hspace{-3cm}\leq C\frac{\sqrt{w_\lambda (\ell +1)}\Gamma (\ell)}{2^\ell \Gamma (\lambda +\ell +1/2)}\int_{-1}^1(1-s^2)^{\lambda +\ell -1/2}(1+s)^{1-\ell }ds\\
&\hspace{-3cm}=C\frac{\sqrt{w_\lambda (\ell +1)}\Gamma (\ell)}{\Gamma (2\lambda +\ell +2)}\leq \frac{C}{(\ell +1)^{\lambda +2}}.
\end{align*}

By proceeding as in the proof of the corresponding property for the heat case in Section \ref{s1}, by using (\ref{acotPsi}) and (\ref{Psidif}) we can prove (\ref{4.161}), (\ref{4.162}) and (\ref{4.163}).

The proof of the $\ell ^p$-boundedness properties for $g_{P^\lambda }^k$ can be finished now similarly to the case of $g_{W^\lambda }^1$ (see Section \ref{s1}).

\section{Proof of Proposition \ref{Prop1}}\label{S5}

The right hand side inequality in (\ref{acotg}) was established in Theorem \ref{Th1}.
To prove the left hand side one we need to show the following polarization formulas.
\begin{lema}\label{Lem5.1a}
Let $\lambda >0$, $1<p<\infty$ and $w\in A_p(\mathbb{N})$. Then, for every $f\in \ell ^p(\mathbb{N},w)$ and $g\in \ell ^{p'}(\mathbb{N},v)$, where $1/p+1/p'=1$ and $v=w^{-p'/p}$,
\begin{equation}\label{5.1a}
\sum_{n=0}^\infty \int_0^\infty t^k\partial _t^kW_t^\lambda (f)(n)\overline{t^k\partial _t^kW_t^\lambda (g)(n)}\frac{dt}{t}=\frac{\Gamma (2k)}{2^{2k}}\sum_{n=0}^\infty f(n)\overline{g(n)},
\end{equation}
and
\begin{equation}\label{5.2b}
\sum_{n=0}^\infty \int_0^\infty t^k\partial _t^kP_t^\lambda (f)(n)\overline{t^k\partial _t^kP_t^\lambda (g)(n)}\frac{dt}{t}=\frac{\Gamma (2k)}{2^{2k}}\sum_{n=0}^\infty f(n)\overline{g(n)},
\end{equation}
\end{lema}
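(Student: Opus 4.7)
The plan is to establish both identities first on the dense subspace $\mathbb{C}_0^\mathbb{N}$ via Parseval's formula for $\mathcal{F}_\lambda$ and an elementary gamma integral, and then extend by continuity to all of $\ell^p(\mathbb{N},w)\times\ell^{p'}(\mathbb{N},v)$, where $v=w^{-1/(p-1)}$, exploiting the $\ell^p$-boundedness of the Littlewood--Paley functions proved in Theorem \ref{Th1}.

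\emph{Step 1 (finitely supported data).} Fix $f,g\in\mathbb{C}_0^\mathbb{N}$. Combining \eqref{1.19} with the semigroup identity $W_t^\lambda=e^{t\Delta_\lambda}$ one obtains $\mathcal{F}_\lambda(W_t^\lambda h)(x)=e^{-2t(1-x)}\mathcal{F}_\lambda(h)(x)$, $x\in(-1,1)$ (this is also implicit in \eqref{eq:9}), and therefore
$$\mathcal{F}_\lambda\bigl(t^k\partial_t^k W_t^\lambda h\bigr)(x)=t^k\bigl(-2(1-x)\bigr)^k e^{-2t(1-x)}\mathcal{F}_\lambda(h)(x),\quad x\in(-1,1),\;t>0.$$
Parseval's identity, Fubini's theorem and the formula $\int_0^\infty t^{2k-1}e^{-at}dt=\Gamma(2k)/a^{2k}$ with $a=4(1-x)$ then give
\begin{align*}
&\sum_{n=0}^\infty\int_0^\infty t^k\partial_t^k W_t^\lambda(f)(n)\,\overline{t^k\partial_t^k W_t^\lambda(g)(n)}\,\frac{dt}{t}\\
&\qquad=\int_{-1}^1\bigl(2(1-x)\bigr)^{2k}\frac{\Gamma(2k)}{\bigl(4(1-x)\bigr)^{2k}}\mathcal{F}_\lambda(f)(x)\overline{\mathcal{F}_\lambda(g)(x)}\,dx=\frac{\Gamma(2k)}{2^{2k}}\sum_{n=0}^\infty f(n)\overline{g(n)}.
\end{align*}
For the Poisson semigroup, the spectral identity $\mathcal{F}_\lambda(P_t^\lambda h)(x)=e^{-\sqrt{2(1-x)}\,t}\mathcal{F}_\lambda(h)(x)$ (already used in the proof of the $\ell^2$-boundedness of $Q^{\lambda,k}$ in Section \ref{S4}) reduces the analogous calculation to the same gamma integral with $a=2\sqrt{2(1-x)}$, producing exactly the same constant $\Gamma(2k)/2^{2k}$ and hence \eqref{5.2b} for $f,g\in\mathbb{C}_0^\mathbb{N}$.

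\emph{Step 2 (density and bilinear continuity).} Since $w\in A_p(\mathbb{N})$, the dual weight $v=w^{-1/(p-1)}$ lies in $A_{p'}(\mathbb{N})$. By Theorem \ref{Th1}(i), for $T\in\{W,P\}$ the Littlewood--Paley function $g^k_{T^\lambda}$ is bounded on both $\ell^p(\mathbb{N},w)$ and $\ell^{p'}(\mathbb{N},v)$. Cauchy--Schwarz in $\mathbb{B}=L^2((0,\infty),dt/t)$ followed by the weighted H\"older inequality yield, for any $f\in\ell^p(\mathbb{N},w)$ and $g\in\ell^{p'}(\mathbb{N},v)$,
$$\Biggl|\sum_{n=0}^\infty\int_0^\infty t^k\partial_t^k T_t^\lambda(f)(n)\,\overline{t^k\partial_t^k T_t^\lambda(g)(n)}\,\frac{dt}{t}\Biggr|\leq \|g^k_{T^\lambda}(f)\|_{\ell^p(\mathbb{N},w)}\|g^k_{T^\lambda}(g)\|_{\ell^{p'}(\mathbb{N},v)}\leq C\|f\|_{\ell^p(\mathbb{N},w)}\|g\|_{\ell^{p'}(\mathbb{N},v)}.$$
Thus the bilinear form on the left is jointly continuous on $\ell^p(\mathbb{N},w)\times\ell^{p'}(\mathbb{N},v)$, and by H\"older so is the pairing $(f,g)\mapsto\sum_n f(n)\overline{g(n)}$ appearing on the right of \eqref{5.1a}--\eqref{5.2b}. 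Since $\mathbb{C}_0^\mathbb{N}\subset\ell^2(\mathbb{N})$ is dense in both $\ell^p(\mathbb{N},w)$ and $\ell^{p'}(\mathbb{N},v)$, Step 1 extends by continuity to the full product space and yields both polarization identities.

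The only subtle point is ensuring that, for $h\in\ell^p(\mathbb{N},w)$, the vector-valued extension $G_\lambda^k(h)(n;\cdot)\in\mathbb{B}$ coincides almost everywhere with the pointwise derivative $t\mapsto t^k\partial_t^k T_t^\lambda h(n)$, so that the bilinear form treated above can be legitimately identified with the left-hand side of \eqref{5.1a}--\eqref{5.2b}. For $T=W$ and $k=1$ this identification was carried out at the end of Section \ref{s1} via the auxiliary maximal operators $W_*^\lambda$ and $W_*^{\lambda,\pm}$; the identical scheme (approximation by $\mathbb{C}_0^\mathbb{N}$-sequences plus maximal control of $t^k\partial_t^k T_t^\lambda$) covers the remaining cases $k>1$ and $T=P$, and is the only technical ingredient beyond Step 1 and the Theorem \ref{Th1} bounds.
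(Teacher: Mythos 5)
Your proposal is correct and follows essentially the same route as the paper: the identity is first verified for $f,g\in\mathbb{C}_0^\mathbb{N}$ via Parseval's equality, Fubini and the gamma integral (exactly the computation in the paper, and the same spectral formula $e^{-\sqrt{2(1-x)}\,t}$ for the Poisson case), and then extended by density using the $\ell^p$-bounds of Theorem \ref{Th1} together with Cauchy--Schwarz in $L^2((0,\infty),dt/t)$ and weighted H\"older. Your explicit bilinear-continuity estimate and the remark on identifying the vector-valued extension with the pointwise derivatives only spell out the ``continuity argument'' that the paper leaves implicit.
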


\begin{proof}
Let $f,g\in \mathbb{C}_0^\mathbb{N}$. We can write, by proceeding as in \eqref{eq:9},
\begin{align*}
\sum_{n=0}^\infty \int_0^\infty t^k\partial _t^kW_t^\lambda (f)(n)\overline{t^k\partial _t^kW_t^\lambda (g)(n)}\frac{dt}{t}&=\int_0^\infty t^{2k-1}\sum_{n=0}^\infty \partial _t^kW_t^\lambda (f)(n)\overline{\partial _t^kW_t^\lambda (g)(n)}dt\\
&\hspace{-1cm}=\int_0^\infty t^{2k-1}\int_{-1}^1\mathcal{F}_\lambda (f)(x)\overline{\mathcal{F}_\lambda (g)(x)}e^{-4t(1-x)}(-2(1-x))^{2k}dxdt\\
&\hspace{-1cm}=\frac{\Gamma (2k)}{2^{2k}}\int_{-1}^1\mathcal{F}_\lambda (f)(x)\overline{\mathcal{F}_\lambda (g)(x)}dx=\frac{\Gamma (2k)}{2^{2k}}\sum_{n=0}^\infty f(n)\overline{g(n)}.
\end{align*}
The interchange between the serie and the integral is legitimated because $f,g\in \mathbb{C}_0^\mathbb{N}$. Since $\mathbb{C}_0^\mathbb{N}$ is a dense subspace of $\ell ^p(\mathbb{N},w)$ and $\ell ^{p'}(\mathbb{N},v)$, by applying Theorem \ref{Th1} and a continuity argument, we can prove the equality (\ref{5.1a}) for every $f\in \ell ^p(\mathbb{N},w)$ and $g\in \ell ^{p'}(\mathbb{N},v)$.

Equality (\ref{5.2b}) can be established similarly.
\end{proof}

Let $1<p<\infty$ and $w\in A_p(\mathbb{N})$. By taking into account that $(\ell ^p(\mathbb{N},w))'=\ell ^{p'}(\mathbb{N},w^{-p'/p})$, (\ref{5.1a}) and Theorem \ref{Th1} we get
\begin{align*}
\|f\|_{\ell ^p(\mathbb{N},w)}&=\sup_{g\in \mathcal{G}}\left|\sum_{n=0}^\infty f(n)\overline{g(n)}\right|=\frac{2^{2k}}{\Gamma (2k)}\sup_{g\in \mathcal{G}}\left|\sum_{n=0}^\infty \int_0^\infty t^k\partial _t^k W_t^\lambda (f)(n)\overline{t^k\partial _t^kW_t^\lambda (g)(n)}\frac{dt}{t}\right|\\
&\leq C\sup_{g\in \mathcal{G}}\|t^k\partial _t^kW_t^\lambda (f)\|_{\ell _\mathbb{B}^p(\mathbb{N},w)}\|t^k\partial _t^kW_t^\lambda (g)\|_{\ell _\mathbb{B}^{p'}(\mathbb{N},w^{-p'/p})}\leq C\|t^k\partial _t^kW_t^\lambda (f)\|_{\ell _\mathbb{B}^p(\mathbb{N},w)},
\end{align*}
where
$\mathcal{G}$ represents the set of functions $g\in \ell ^{p'}(\mathbb{N},w^{-p'/p})$ such that $\|g\|_{\ell ^{p'}(\mathbb{N},w^{-p'/p})}\leq 1$. Thus, the left hand side inequality in (\ref{acotg}) in the heat case is proved. For the Poisson semigroup $\{P_t^\lambda \}_{t>0}$ we can proceed in a similar way.

\section{Proof of Theorem \ref{Th2}}

Let $\lambda ,\mu>1$. The transplantation operator $\mathcal{T}_{\lambda ,\mu }$ is defined by
$$
\mathcal{T}_{\lambda , \mu }(f)=\mathcal{F}_\mu ^{-1}(\mathcal{F}_\lambda f),\quad f\in \ell ^2(\mathbb{N}).
$$
Since $\mathcal{F}_\lambda $ is an isometric isomorphism from $\ell ^2(\mathbb{N})$ into $L^2(-1,1)$, $\mathcal{T}_{\lambda ,\mu }$ is an isometry from $\ell^2(\mathbb{N})$ into itself. If $f\in \mathbb{C}_0^\mathbb{N}$, we have that
$$
\mathcal{T}_{\lambda ,\mu }(f)(n)=\mathcal{F}_\mu ^{-1}\left(\sum_{m=0}^\infty f(m)\varphi _m^\lambda \right)(n)=\sum_{m=0}^\infty f(m)K_{\lambda , \mu }(n,m),\quad n\in \mathbb{N},
$$
where
$$
K_{\lambda , \mu }(n,m):=\int_{-1}^1\varphi _n^\mu (x)\varphi _m^\lambda (x)dx,\quad n,m\in \mathbb{N}.
$$

Moreover, if $f\in \ell ^2(\mathbb{N})$, then
$$
\mathcal{T}_{\lambda ,\mu }(f)(n)=\lim_{k\rightarrow\infty}\sum_{m=0}^kf(m)K_{\lambda ,\mu }(n,m),
$$
in the sense of convergence in $\ell ^2(\mathbb{N})$ and also pointwisely.

Since $\varphi _k^\lambda $ is an odd (even) function when $k$ is odd (even), we obtain
$$
K_{\lambda ,\mu }(n,m)=\left\{\begin{array}{ll}
					\displaystyle 2\int_0^1\varphi _n^\mu (x)\varphi _m^\lambda (x)dx,&\mbox{ if $n$ and $m$ are both odd or both even,}\\
					&\\
					0,&\mbox{ otherwise.}
				       \end{array}
\right.
$$

We can write, for every $f\in \ell ^2(\mathbb{N})$,
$$
\mathcal{T}_{\lambda ,\mu }(f)(n)=\left\{\begin{array}{ll}
					\displaystyle \sum_{m=0}^\infty f(2m)K_{\lambda ,\mu }(n,2m),&\mbox{ if $n$ is even,}\\
					&\\
					\displaystyle \sum_{m=0}^\infty f(2m+1)K_{\lambda ,\mu}(n,2m+1),&\mbox{ if $n$ is odd.}
				       \end{array}
\right.
$$

We now define the following two operators:
$$
\mathcal{T}_{\lambda ,\mu }^{\rm e}(f)(n):=\sum_{m=0}^\infty f(m)K_{\lambda , \mu}^{\rm e}(n,m),\quad f\in \ell ^2(\mathbb{N}),
$$
and
$$
\mathcal{T}_{\lambda ,\mu }^{\rm o}(f)(n):=\sum_{m=0}^\infty f(m)K_{\lambda ,\mu }^{\rm o}(n,m),\quad f\in \ell ^2(\mathbb{N}),
$$
where 
$$
K_{\lambda ,\mu }^{\rm e}(n,m):=K_{\lambda ,\mu }(2n,2m),\quad n,m\in \mathbb{N},$$
and
$$
K_{\lambda ,\mu }^{\rm o}(n,m):=K_{\lambda ,\mu }(2n+1,2m+1),\quad n,m\in \mathbb{N}.
$$

The superindices e and o refer to even and odd, respectively. Note that, for every $f\in \ell ^2(\mathbb{N})$,
$$
\mathcal{T}_{\lambda ,\mu }(f)(2n)=\mathcal{T}_{\lambda ,\mu }^{\rm e}(\widetilde{f})(n),\quad n\in \mathbb{N},
$$
and
$$
\mathcal{T}_{\lambda ,\mu }(f)(2n+1)=\mathcal{T}_{\lambda ,\mu }^{\rm o}(\widehat{f})(n),\quad n\in \mathbb{N},
$$
where $\widetilde{f}(n):=f(2n)$, and $\widehat{f}(n):=f(2n+1)$, $n\in \mathbb{N}$.

Since the operator $\mathcal{T}_{\lambda ,\mu}$ is bounded from $\ell ^2(\mathbb{N})$ into itself, the operators $\mathcal{T}_{\lambda ,\mu}^{\rm e}$ and $\mathcal{T}_{\lambda ,\mu }^{\rm o}$ are bounded from $\ell ^2(\mathbb{N})$ into itself.
It is sufficient to note that if $f\in \ell ^2(\mathbb{N})$, then
$$
\mathcal{T}_{\lambda ,\mu}^{\rm e}(f)(n)=\mathcal{T}_{\lambda ,\mu}(g)(2n)\quad \mbox{ and }\quad \mathcal{T}_{\lambda ,\mu}^{\rm o}(f)(n)=\mathcal{T}_{\lambda ,\mu}(h)(2n+1), \quad n\in \mathbb{N},
$$
where
$$
g(n)=f\Big(\frac{n}{2}\Big)\chi_{\{m\in \mathbb{N}:\;m { \rm \;is \;even }\}}(n)\quad \mbox{ and }\quad h(n)=f\Big(\frac{n-1}{2}\Big)\chi_{\{m\in \mathbb{N}:\;m {\rm \;is \; odd }\}}(n),\quad n\in \mathbb{N}.
$$

We are going to prove that there exists $C>0$ such that, for each $n,m,\ell \in \mathbb{N}$, $n\not=m$,
\begin{align}
\label{5.1}&|K_{\lambda ,\mu }(n,m)|\leq \frac{C}{|n-m|}, \; \mbox{ provided that }(\mu -1)/2<\lambda <\mu,
\\
&\label{5.2} |K_{\lambda ,\mu }^{\rm s}(n,m)-K_{\lambda ,\mu }^{\rm s}(\ell ,m)|\leq C\frac{|n-\ell |}{|n-m|^2},\quad \frac{m}{2}\leq n,\ell\leq \frac{3m}{2},\\
&\label{5.3}|K_{\lambda ,\mu }^{\rm s}(m,n)-K_{\lambda ,\mu }^{\rm s}(m,\ell)|\leq C\frac{|n-\ell |}{|n-m|^2},\quad \frac{m}{2}\leq n,\ell \leq \frac{3m}{2},
\end{align}
when ${\rm s}={\rm o}$ or $s={\rm e}$.

\begin{remark}\label{conditions}
Actually, we will also establish that \eqref{5.1} is satisfied for all $\lambda ,\mu >0$, when $n,m \in \mathbb{N}$, $n\not =m$ and $m/2\leq n\leq 3m/2$, that \eqref{5.2} holds for all $\lambda >0$ and $\mu >1$, and \eqref{5.3}, for all $\lambda >1$ and $\mu >0$.
\end{remark}
We will use the following two lemmas established in \cite[p. 49 and p. 59]{Sz2} (see also \cite[p. 400]{AW}), which refers to the ultraspherical polynomials $\mathcal{P}_k^\lambda$ in \eqref{ultraspherical}.

\begin{lema}\label{Lem5.1}
Assume that $\gamma >0$ is not an integer. Then, for every $k,r\in \mathbb{N}$, and $\theta \in (0,\pi )$,
\begin{equation}\label{5.7}
\mathcal{P}_k^\gamma (\cos \theta )=A_{k,r}^\gamma (\theta )+R_{k,r }^\gamma (\theta ),
\end{equation}
where
$$
A_{k,r}^\gamma (\theta ):=\sum_{\ell =0}^{r-1}b_{\ell}^\gamma \frac{k!}{\Gamma (k+ \ell +\gamma +1)}
\frac{\cos ((k+\ell +\gamma )\theta -(\ell +\gamma )\pi /2)}{(2\sin \theta )^{\ell +\gamma }},
$$
with
$$
b_{\ell }^\gamma :=\frac{2}{\pi }\sin (\gamma \pi )\frac{\Gamma (2\gamma )}{\Gamma (\gamma )}\frac{\Gamma (\ell +\gamma )\Gamma (\ell -\gamma +1)}{\ell !},\quad \ell=0,...,r-1,
$$
and
$$
|R_{k,r}^\gamma (\theta )|\leq C(k\sin \theta )^{-(r+\gamma) },\quad \theta \in (0,\pi ),
$$
being $C>0$ independent of $\theta \in (0,\pi )$ and $k,r\in \mathbb{N}$.
\end{lema}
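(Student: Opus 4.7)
The statement is a classical asymptotic expansion of ultraspherical polynomials with an explicit error term, and the cleanest route is simply to invoke the results on the pages of \cite{Sz2} quoted in the statement. My plan is therefore first to verify that the normalization of $\mathcal{P}_k^\gamma$ fixed by \eqref{ultraspherical} coincides with Szeg\H{o}'s (up to the explicit constant $2^n(\lambda+1/2)_n$ already present in the Rodrigues formula) and then to transcribe the expansion directly. Should a self-contained derivation be required, my approach would be the following.

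I would start from the generating identity $(1-2t\cos\theta+t^2)^{-\gamma}=\sum_{k\ge 0}c_k^\gamma\,\mathcal{P}_k^\gamma(\cos\theta)\,t^k$, with the $c_k^\gamma$ dictated by the paper's normalization, and recover $\mathcal{P}_k^\gamma(\cos\theta)$ by the Cauchy formula
$$
\mathcal{P}_k^\gamma(\cos\theta)=\frac{C_k}{2\pi i}\oint\frac{dt}{t^{k+1}(1-te^{i\theta})^{\gamma}(1-te^{-i\theta})^{\gamma}}
$$
over a small circle about the origin. I would then deform this contour outward towards the unit circle and split the resulting integral into contributions from small arcs around each of the two branch points $e^{\pm i\theta}$ plus a negligible piece away from them. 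Near each branch point I would expand the regular factor binomially to order $r$; the two conjugate contributions combine to produce the cosines $\cos((k+\ell+\gamma)\theta-(\ell+\gamma)\pi/2)$, the local Beta integrals deliver the ratio $k!/\Gamma(k+\ell+\gamma+1)$, the Jacobian of the local change of variables supplies the factor $(2\sin\theta)^{-(\ell+\gamma)}$, and the reflection formula for $\Gamma$ (which uses precisely that $\gamma\notin\mathbb{N}$) yields the coefficients $b_\ell^\gamma$ with their $\sin(\gamma\pi)$ prefactor. This identifies $A_{k,r}^\gamma(\theta)$ term by term.

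Finally, the remainder $R_{k,r}^\gamma(\theta)$ is what is left after this truncation, and would be controlled by integrating by parts $r$ times inside the contour integral, each step gaining a factor $(k\sin\theta)^{-1}$ because the distance from the branch points to the relevant saddle scales like $\sin\theta$. The main obstacle, and the most delicate point in the whole scheme, is making the constant $C$ in $|R_{k,r}^\gamma(\theta)|\le C(k\sin\theta)^{-(r+\gamma)}$ uniform simultaneously in $\theta\in(0,\pi)$ and in $r\in\mathbb{N}$: as $\theta\to 0^+$ or $\theta\to\pi^-$ the two branch points coalesce, the steepest-descent picture degenerates, and one must match the interior expansion with the Mehler--Heine/Bessel-type endpoint asymptotics of $\mathcal{P}_k^\gamma$. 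The algebraic identification of the $b_\ell^\gamma$ is a routine (if lengthy) computation; the endpoint uniformity is where the real effort lies, and it is precisely this part that Szeg\H{o}'s classical argument handles by a separate boundary analysis that glues continuously to the interior expansion.
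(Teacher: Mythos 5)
Your primary route is exactly what the paper does: Lemma \ref{Lem5.1} is not proved there at all, but quoted directly from Szeg\H{o}'s classical asymptotic expansion in \cite[p. 49 and p. 59]{Sz2} (see also \cite[p. 400]{AW}), with the only real task being the normalization check $\mathcal{P}_k^\gamma(1)=1$ that you identify. Your additional contour-integral sketch is a reasonable outline of how the classical result is obtained, but it is not needed and is not part of the paper's argument.
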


\begin{lema}\label{Lem5.2}
Assume that $\gamma \in \mathbb{N}$, $\gamma \geq 1$. Then, for every $k\in \mathbb{N}$ and $ \theta \in (0,\pi )$,
$$
\mathcal{P}_k^\gamma (\cos \theta )=\sum_{\ell =0}^{\gamma -1}\widetilde{b}_\ell ^\gamma \frac{k!}{\Gamma (k+\ell +\gamma +1)}\frac{\cos ((k+\ell +\gamma )\theta -(\ell +\gamma )\pi /2)}{(2\sin \theta )^{\ell +\gamma }},
$$
where
$$
\widetilde{b}_\ell ^\gamma :=(-1)^\ell \frac{2\Gamma (2\gamma)\Gamma (\ell +\gamma)}{\Gamma (\gamma)\ell !\Gamma (\gamma -\ell)},\quad \ell=0,...,r-1.
$$
\end{lema}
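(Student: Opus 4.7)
The natural route is a partial-fraction expansion of the classical generating function for ultraspherical polynomials. Writing $x=\cos\theta$ and factoring, one has
\[
(1-2t\cos\theta+t^{2})^{-\gamma}=\bigl[(1-te^{i\theta})(1-te^{-i\theta})\bigr]^{-\gamma},
\]
and the left-hand side is the generating series of the Gegenbauer polynomials $C_{k}^{\gamma}$, which differ from $\mathcal{P}_{k}^{\gamma}$ (as defined by the Rodrigues formula in \eqref{ultraspherical}) only by an explicit, known constant $\kappa_{k}^{\gamma}$ depending on $k$ and $\gamma$. Hence it suffices to read off the coefficient of $t^{k}$ in an appropriate partial-fraction decomposition of the right-hand side and then divide by $\kappa_{k}^{\gamma}$.

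Since $\gamma\in\mathbb{N}\setminus\{0\}$, the rational function above has exactly two poles $t=e^{\pm i\theta}$, each of order~$\gamma$, so partial fractions give the \emph{finite} decomposition
\[
\bigl[(1-te^{i\theta})(1-te^{-i\theta})\bigr]^{-\gamma}=\sum_{\ell=0}^{\gamma-1}\left\{\frac{A_{\ell}(\theta)}{(1-te^{i\theta})^{\gamma-\ell}}+\frac{\overline{A_{\ell}(\theta)}}{(1-te^{-i\theta})^{\gamma-\ell}}\right\},
\]
with $A_{\ell}(\theta)$ computed by the standard residue/derivative formula at $t=e^{-i\theta}$. A direct calculation of the $\ell$-th derivative of $(1-te^{-i\theta})^{-\gamma}$ at that point produces
\[
A_{\ell}(\theta)=c_{\ell}^{\gamma}\,\frac{e^{i(\ell+\gamma)\theta}}{(2i\sin\theta)^{\ell+\gamma}},
\]
where $c_{\ell}^{\gamma}$ is an explicit product of Pochhammer factors with an overall sign $(-1)^{\ell}$ coming from the derivatives.

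Next I would expand each summand via the binomial series $(1-te^{\pm i\theta})^{-\mu}=\sum_{k\geq0}(\mu)_{k}t^{k}e^{\pm ik\theta}/k!$, extract the coefficient of $t^{k}$, and pair each conjugate couple to obtain
\[
\frac{e^{i(k+\ell+\gamma)\theta}}{i^{\ell+\gamma}}+\frac{e^{-i(k+\ell+\gamma)\theta}}{(-i)^{\ell+\gamma}}=2\cos\!\left((k+\ell+\gamma)\theta-\frac{(\ell+\gamma)\pi}{2}\right),
\]
which is exactly the trigonometric factor appearing in the statement, while the denominator $|(2i\sin\theta)^{\ell+\gamma}|=(2\sin\theta)^{\ell+\gamma}$ reproduces the singular weight. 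Finally I would identify the resulting coefficient with $\widetilde{b}_{\ell}^{\gamma}\cdot k!/\Gamma(k+\ell+\gamma+1)$ after dividing by $\kappa_{k}^{\gamma}$, using the $\Gamma$-function identity
\[
\frac{\Gamma(2\gamma)\,\Gamma(\ell+\gamma)}{\Gamma(\gamma)\,\Gamma(\gamma-\ell)\,\ell!}=\frac{(\gamma)_{\ell}\,(2\gamma)_{k}}{k!\,(\cdots)}\cdot(\cdots),
\]
where the omitted factors are routine Pochhammer rearrangements.

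\textbf{Main obstacle.} The algebra is entirely classical, but it is bookkeeping-intensive: the crux is performing the $\ell$-th derivative at a pole of order $\gamma$ and then unscrambling four kinds of factors, namely $\kappa_{k}^{\gamma}$ (Rodrigues-to-Gegenbauer normalization), $(\gamma)_{\ell}/\ell!$ (residue), $(\gamma-\ell)_{k}/k!$ (binomial series inside each partial fraction), and the phase $i^{-(\ell+\gamma)}$, so as to display the compact closed form $\widetilde{b}_{\ell}^{\gamma}$. An alternative that sidesteps some of this is induction on $\gamma$, starting from the Chebyshev case $\gamma=1$ (where $\mathcal{P}_{k}^{1}(\cos\theta)=\sin((k+1)\theta)/[(k+1)\sin\theta]$ can be verified directly) and using the differentiation identity $(d/dx)\mathcal{P}_{k}^{\gamma}(x)\propto\mathcal{P}_{k-1}^{\gamma+1}(x)$; but the induction step requires essentially the same Pochhammer manipulation, so little is gained.
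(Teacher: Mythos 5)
First, a point of reference: the paper does not prove this lemma at all; it is quoted from Szeg\H{o}'s 1933 memoir \cite{Sz2} (see also \cite[p. 400]{AW}), so your proposal must stand on its own as a reconstruction of that classical result. Your overall plan (partial fractions in $t$ of the generating function, which is rational because $\gamma \in \mathbb{N}$) is a legitimate idea, but the computation you place at the crux is wrong, and the error cannot be absorbed by the final ``routine Pochhammer rearrangements''. Writing $u=1-te^{i\theta}$, so that $t=(1-u)e^{-i\theta}$ and $1-te^{-i\theta}=(1-e^{-2i\theta})+ue^{-2i\theta}$, the principal part of $\bigl[(1-te^{i\theta})(1-te^{-i\theta})\bigr]^{-\gamma}$ at $t=e^{-i\theta}$ has coefficients
\[
A_{\ell}(\theta)=(-1)^{\ell}\binom{\gamma+\ell-1}{\ell}\frac{e^{-2i\ell\theta}}{(1-e^{-2i\theta})^{\gamma+\ell}}
=(-1)^{\ell}\binom{\gamma+\ell-1}{\ell}\frac{e^{i(\gamma-\ell)\theta}}{(2i\sin\theta)^{\gamma+\ell}},
\]
since $1-e^{-2i\theta}=2i\sin\theta\,e^{-i\theta}$: the phase is $e^{i(\gamma-\ell)\theta}$, \emph{not} $e^{i(\gamma+\ell)\theta}$ as you assert. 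Running your own scheme with the correct $A_{\ell}$ (the $t^{k}$-coefficient of $(1-te^{i\theta})^{-(\gamma-\ell)}$ is $\binom{k+\gamma-\ell-1}{k}e^{ik\theta}$, and conjugate pairing converts $i^{-(\gamma+\ell)}$ into the phase shift) yields
\[
C_{k}^{\gamma}(\cos\theta)=\sum_{\ell=0}^{\gamma-1}(-1)^{\ell}\binom{\gamma+\ell-1}{\ell}\binom{k+\gamma-\ell-1}{k}\,
\frac{2\cos\bigl((k+\gamma-\ell)\theta-(\gamma+\ell)\pi/2\bigr)}{(2\sin\theta)^{\gamma+\ell}},
\]
where $C_{k}^{\gamma}=\binom{k+2\gamma-1}{k}\mathcal{P}_{k}^{\gamma}$ is the Gegenbauer polynomial of the generating function. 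This identity is true, but it is not the Lemma.

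The Lemma's $\ell$-th term carries the phase $(k+\gamma+\ell)\theta-(\gamma+\ell)\pi/2$ and the coefficient $k!/\Gamma(k+\gamma+\ell+1)$, whereas the expansion above carries the phase $(k+\gamma-\ell)\theta-(\gamma+\ell)\pi/2$ and the coefficient $\Gamma(k+\gamma-\ell)/(k!\,\Gamma(\gamma-\ell))$; for $\gamma\geq 2$ and $\ell\geq 1$ these are genuinely different, and only the \emph{sums} over $\ell$ agree. Concretely, for $\gamma=2$ the partial-fraction route produces $-\sin((k+1)\theta)$ in its $\ell=1$ term while the Lemma has $-\sin((k+3)\theta)$; at $\gamma=2$, $k=1$ the respective $\ell=1$ terms are $\cos\theta/(4\sin^{2}\theta)$ and $\cos\theta\cos 2\theta/(2\sin^{2}\theta)$, which are not equal as functions of $\theta$. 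So no identification of constants at the end can match the two expansions term by term: the discrepancy is structural. What is missing is a genuine transformation step. Indeed, the Lemma is equivalent to the terminating hypergeometric representation
\[
\mathcal{P}_{k}^{\gamma}(\cos\theta)=\frac{2\Gamma(2\gamma)\,k!}{\Gamma(\gamma)\Gamma(k+\gamma+1)}\,
\mathrm{Re}\left[\frac{e^{i((k+\gamma)\theta-\gamma\pi/2)}}{(2\sin\theta)^{\gamma}}\;
{}_2F_1\Bigl(\gamma,\,1-\gamma;\,k+\gamma+1;\,\frac{1}{1-e^{-2i\theta}}\Bigr)\right],
\]
while your (corrected) expansion is the analogous finite sum with third parameter $1-k-\gamma$ and argument $1/(1-e^{2i\theta})$; passing between the two is a connection formula for terminating ${}_2F_1$ series, and that step --- which is precisely the content of Szeg\H{o}'s derivation --- is absent from your argument. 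Your fallback (induction on $\gamma$ via $\frac{d}{dx}\mathcal{P}_{k}^{\gamma}\propto\mathcal{P}_{k-1}^{\gamma+1}$) is likewise only a plan, and its induction step must produce the upshifted frequencies $(k+\gamma+\ell)\theta$, which is exactly where the computation above goes astray.
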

Note that if $r,k\in \mathbb{N}$,  $r\leq \gamma -1$, and $\theta \in (0,\pi )$,
\begin{equation}\label{5.8}
\mathcal{P}_k^\gamma (\cos \theta )=\widetilde{A}_{k,r}^\gamma (\theta )+\widetilde{R}_{k,r }^\gamma (\theta ),
\end{equation}
where
$$
\widetilde{A}_{k,r}^\gamma :=\sum_{\ell =0}^{r-1}\widetilde{b}_\ell ^\gamma \frac{k!}{\Gamma (k+\ell +\gamma +1)}\frac{\cos ((k+\ell +\gamma )\theta -(\ell +\gamma )\pi /2)}{(2\sin \theta )^{\ell +\gamma }},
$$
and
$$
|\widetilde{R}_{k,r}^\gamma (\theta )|\leq C(k\sin \theta )^{-(r+\gamma)},\quad \theta \in (0,\pi ),
$$
where $C>0$ does not depend on $\theta \in (0,\pi )$ nor on $k,r\in \mathbb{N}$, $r\leq \gamma -1$.
\vspace*{0.2cm}

\begin{proof}[Proof of (\ref{5.1})] Let $(\mu -1)/2<\lambda <\mu$. 

When $n,m\in \mathbb{N}$ and $0\leq n\leq m/2$ or $3m/2\leq n$, according to \cite[pp. 400-401]{AW} and since $(\mu -1)/2<\lambda <\mu$, we have that
$$
|K_{\lambda ,\mu }(n,m)|\leq C\left\{\begin{array}{ll}
						\displaystyle \frac{1}{m},&\displaystyle 0\leq n\leq \frac{m}{2},\\
						&\\
						\displaystyle \frac{1}{n},&\displaystyle n\geq \frac{3m}{2},
            				      \end{array}
\right. 
$$
and \eqref{5.1} is established for every $n,m\in \mathbb{N}$, such that $0\leq n\leq m/2$ or $3m/2\leq n$.

We now assume that $n,m\in \mathbb{N}$, $m/2\leq n\leq 3m/2$, $n\not=m$. To analyze this case we use some of the ideas in \cite{AW}, and, as we can observe, in this case (\ref{5.1}) is satisfied for every $\lambda, \mu >0$.  

Suppose that $n$ and $m$ are both even or both odd. We have that
\begin{align}\label{5.5}
K_{\lambda ,\mu }(n,m)&=2\int_0^{\pi /2}\varphi _n^\mu (\cos \theta )\varphi _m^\lambda (\cos \theta)\sin \theta d\theta=2\left(\int_0^{1/n}+\int_{1/n}^{\pi /2}\right)\varphi _n^\mu (\cos \theta )\varphi _m^\lambda (\cos \theta )\sin \theta d\theta\nonumber\\
&=2\sqrt{w_\lambda (m)w_\mu (n)}\left(\int_0^{1/n}+\int_{1/n}^{\pi /2}\right)\mathcal{P}_n^\mu (\cos \theta )\mathcal{P} _m^\lambda (\cos \theta )(\sin \theta)^{\lambda +\mu} d\theta\nonumber\\
&=:K_0(n,m)+K_1(n,m).
\end{align}

By using (\ref{3.8}), and since $|\mathcal{P}_k^\gamma (x)|\leq 1$, $x\in (-1,1)$, $k\in \mathbb{N}$ and $\gamma >0$, (\cite[Theorem 7.33.1]{Sz}), we get
\begin{equation}\label{5.6}
|K_0(n,m)|\leq Cn^{\lambda +\mu }\int_0^{1/n}\theta ^{\lambda +\mu }d\theta \leq \frac{C}{n},
\end{equation}
where $C>0$ does not depend on $n$.

We use Lemmas \ref{Lem5.1} and \ref{Lem5.2} to estimate $K_1(n,m)$. 
Assume that $\lambda ,\mu $ are not integers. By using (\ref{5.7}) with $r=2$ we get
\begin{align*}
K_1(n,m)&=2\sqrt{w_\lambda (m)w_\mu (n)}\int_{1/n}^{\pi /2}\Big[A_{n,2}^\mu (\theta) A_{m,2}^\lambda (\theta )+A_{n,2}^\mu (\theta)R_{m,2}^\lambda (\theta )\\
&\quad +A_{m,2}^\lambda (\theta )R_{n,2}^\mu (\theta )+R_{n,2}^\mu (\theta )R_{m,2}^\lambda (\theta)\Big](\sin \theta)^{\lambda +\mu}d\theta .
\end{align*}
We observe that, for every $\gamma >0$, $k,r\in \mathbb{N}$, and $\theta \in (0,\pi /2)$,
\begin{equation}\label{acotA}
|A_{k,r}^\gamma ( \theta )|\leq C\frac{k!}{(\sin \theta )^\gamma }\sum_{\ell =0}^{r-1}\frac{1}{\Gamma (k+\ell +\gamma +1)(\sin \theta )^\ell}\leq \frac{C}{[(k+1)\theta]^\gamma}\sum_{\ell =0}^{r-1}\frac{1}{[(k+1)\theta]^\ell}.
\end{equation}
Then, for every $\theta \in (1/n,\pi /2)$, we have that
\begin{align*}
|A_{n,2}^\mu (\theta)R_{m,2}^\lambda (\theta )+A_{m,2}^\lambda (\theta )R_{n,2}^\mu (\theta )+R_{n,2}^\mu (\theta )R_{m,2}^\lambda (\theta)|&\leq C\left(\frac{1}{(n\theta )^{\lambda +\mu +2}}+\frac{1}{(n\theta )^{\lambda +\mu +4}}\right)\leq \frac{C}{(n\theta )^{\lambda +\mu +2}},
\end{align*}
which jointly with (\ref{3.8}), leads to
\begin{align}\label{K1}
|K_1(n,m)|&\leq C\left(n^{\lambda +\mu}\left|\int_{1/n}^{\pi /2}A_{n,2}^\mu (\theta) A_{m,2}^\lambda (\theta )(\sin \theta )^{\lambda +\mu}d\theta\right| +\frac{1}{n^2}\int_{1/n}^\infty \frac{d\theta }{\theta ^2}\right)\nonumber\\
&\leq C\left(n^{\lambda +\mu}\left|\int_{1/n}^{\pi /2}A_{n,2}^\mu (\theta) A_{m,2}^\lambda (\theta )(\sin \theta )^{\lambda +\mu }d\theta \right|+\frac{1}{n}\right).
\end{align}

Next we estimate the first summand in the last inequality. For every $k\in \mathbb{N}$ and $\gamma >0$, we consider $\alpha _k^\gamma (\theta ):=(k+\gamma )\theta -\gamma \pi /2$, $\theta \in (0,\pi /2)$. Then, for every $\theta \in (0,\pi /2)$,
\begin{align*}
A_{n,2}^\mu (\theta) A_{m,2}^\lambda (\theta )&=\frac{n!m!}{(\sin \theta )^{\lambda +\mu}}\left(b_0^\mu b_0^\lambda \frac{\cos (\alpha _n^\mu (\theta ))\cos (\alpha _m^\lambda (\theta ))}{\Gamma (n+\mu +1)\Gamma (m+\lambda +1)}
+b_1^\mu b_0^\lambda \frac{\sin (\alpha _n^\mu (\theta )+\theta )\cos (\alpha _m^\lambda (\theta ))}{\Gamma (n+\mu +2)\Gamma (m+\lambda +1)\sin \theta }\right.\\
&\quad \left.+b_0^\mu b_1^\lambda \frac{\cos (\alpha _n^\mu (\theta )) \sin(\alpha _m^\lambda (\theta )+\theta)}{\Gamma (n+\mu +1)\Gamma (m+\lambda +2)\sin \theta }+
b_1^\mu b_1^\lambda \frac{\sin (\alpha _n^\mu (\theta )+\theta )\sin(\alpha _m^\lambda (\theta )+\theta )}{\Gamma (n+\mu +2)\Gamma (m+\lambda +2)(\sin \theta)^2 }\right).
\end{align*}

Thus, we obtain
\begin{align}\label{K11}
n^{\lambda +\mu}\left|\int_{1/n}^{\pi /2}A_{n,2}^\mu (\theta) A_{m,2}^\lambda (\theta )(\sin \theta)^{\lambda +\mu}d\theta\right|&\leq C\left(\Big|\int_{1/n}^{\pi /2}\cos (\alpha _n^\mu (\theta ))\cos (\alpha _m^\lambda (\theta ))d\theta \Big|\right.\nonumber\\
&\hspace{-4cm}\quad +\Big|\int_{1/n}^{\pi /2}\frac{\sin (\alpha _n^\mu (\theta )+\theta )\cos (\alpha _m^\lambda (\theta ))}{n\sin \theta}d\theta\Big|+\Big|\int_{1/n}^{\pi /2}\frac{\cos (\alpha _n^\mu (\theta )) \sin (\alpha _m^\lambda (\theta )+\theta)}{n\sin \theta}d\theta\Big|\nonumber\\
&\hspace{-4cm}\left.\quad +\Big|\int_{1/n}^{\pi /2}\frac{\sin (\alpha _n^\mu (\theta )+\theta )\sin(\alpha _m^\lambda (\theta )+\theta )}{(n\sin \theta)^2}d\theta\Big|\right)=:C\sum_{j=1}^4I_j(n,m).
\end{align}
We note that
\begin{equation}\label{I4}
I_4(n,m)\leq \frac{C}{n^2}\int_{1/n}^\infty \frac{d\theta }{\theta ^2}\leq \frac{C}{n}.
\end{equation}

For $I_1(n,m)$, we have $I_1(n,m)\leq C$ and, also, when $n+\mu \not=m+\lambda$, 
\begin{align*}
I_1(n,m)&=\frac{1}{2}\left|\int_{1/n}^{\pi /2}\Big(\cos (\alpha _n^\mu (\theta )+\alpha _m^\lambda (\theta ))+\cos (\alpha _n^\mu (\theta )-\alpha _m^\lambda (\theta ))\Big)d\theta \right|\\
&\leq C\left(\frac{|\sin (\alpha _n^\mu (\theta )+\alpha _m^\lambda (\theta ))|}{n+m+\lambda +\mu }\Big|_{\theta =1/n}^{\pi /2}+\frac{|\sin (\alpha _n^\mu (\theta )-\alpha _m^\lambda (\theta ))|}{|n-m+\lambda -\mu |}\Big|_{\theta =1/n}^{\pi /2}\right)\\
&\leq C\left(\frac{1}{n+m+\lambda +\mu}+\frac{1}{|n-m+\mu -\lambda |}\right)\leq C\left(\frac{1}{n}+\frac{1}{|n-m+\mu -\lambda|}\right).
\end{align*}
Then, we get that
$$
I_1(n,m)\leq C\leq \frac{C}{|n-m|},\quad \mbox{ when }|n-m|\leq 2|\mu -\lambda|,
$$
and
$$
I_1(n,m)\leq C\left(\frac{1}{n}+\frac{1}{|n-m+\mu -\lambda|}\right)\leq C\left(\frac{1}{n}+\frac{1}{|n-m|}\right),\quad \mbox{ if }|n-m|>2|\mu -\lambda |,
$$
that is,
\begin{equation}\label{I1}
I_1(n,m)\leq \frac{C}{|n-m|}.
\end{equation}
On the other hand, we have that
\begin{align*}
2\sin (\alpha _n^\mu (\theta )+\theta )\cos (\alpha _m^\lambda (\theta ))&=\sin (\alpha _n^\mu (\theta )+\alpha _m^\lambda (\theta )+\theta)+\sin (\alpha _n^\mu (\theta )-\alpha _m^\lambda (\theta )+\theta)\\
&\hspace{-4cm}=\sin ((n+m+\mu +\lambda +1)\theta )\cos((\lambda +\mu)\pi /2)-\cos ((n+m+\mu +\lambda +1)\theta )\sin((\lambda +\mu)\pi /2)\\
&\hspace{-4cm}\quad +\sin ((n-m+\mu -\lambda +1)\theta )\cos((\mu-\lambda )\pi /2)-\cos ((n-m+\mu -\lambda +1)\theta )\sin((\mu-\lambda )\pi /2).
\end{align*}
Let $A:=n+m+\mu +\lambda +1$ or $A:=n-m+\mu -\lambda +1$. We are going to estimate 
$$
J_1:=\left|\int_{1/n}^{\pi /2}\frac{\sin (A\theta)}{n\sin \theta }d\theta \right|\quad \mbox{ and }\quad J_2:=\left|\int_{1/n}^{\pi /2}\frac{\cos (A\theta)}{n\sin \theta }d\theta \right|,
$$
when $A\not=0$. We can write
\begin{align*}
J_1&\leq \frac{1}{n}\left(\int_{1/n}^{\pi /2}\Big|\frac{1}{\sin \theta }-\frac{1}{\theta}\Big|d\theta +\left|\int_{1/n}^{\pi /2}\frac{\sin (|A|\theta )}{\theta }d\theta\right|\right)\leq \frac{1}{n}\left(\int_{1/n}^{\pi /2}\theta d\theta +\left|\int_{|A|/n}^{|A|\pi /2}\frac{\sin u}{u}du\right|\right)\\
&\leq \frac{C}{n}\left(1+\left|\frac{\cos u}{u}\Big]_{u=|A|/n}^{u=|A|\pi /2}+\int_{|A|/n}^{|A|\pi /2}\frac{\cos u}{u^2}du\right|\right)\leq \frac{C}{n}\left(1+\frac{n}{|A|}+\int_{|A|/n}^\infty \frac{du}{u^2}\right)\leq \frac{C}{n}\left(1+\frac{n}{|A|}\right).
\end{align*}
In a similar way,
$$
J_2\leq \frac{C}{n}\left(1+\left|\int_{|A|/n}^{|A|\pi /2}\frac{\cos u}{u}du\right|\right)\leq \frac{C}{n}\left(1+\left|\frac{\sin u}{u}\Big]_{u=|A|/n}^{u=|A|\pi /2}+\int_{|A|/n}^{|A|\pi /2}\frac{\sin u}{u^2}du\right|\right)
\leq \frac{C}{n}\left(1+\frac{n}{|A|}\right).
$$

Thus, we can deduce that, when $n+\mu\not=m+\lambda +1$, and $n+\mu\not=m+\lambda -1$,
$$
I_2(n,m)+I_3(n,m)\leq \frac{C}{n}\left(1+\frac{n}{n+m+\lambda +\mu+1}+\frac{n}{|n-m+\mu-\lambda +1|}+\frac{n}{|n-m+\mu-\lambda -1|}\right).
$$

Then, if $|n-m|> 2\max\{|\mu-\lambda +1|, |\mu-\lambda -1|\}$, 
$$
I_2(n,m)+I_3(n,m)\leq C\left(\frac{1}{n}+\frac{1}{|n-m|-|\mu-\lambda +1|}+\frac{1}{|n-m|-|\mu-\lambda -1|}\right)\leq C\left(\frac{1}{n}+\frac{1}{|n-m|}\right).
$$
In the case that  $|n-m|\leq 2\max\{|\mu-\lambda +1|, |\mu-\lambda -1|\}$, we write
$$
I_2(n,m)+I_3(n,m)\leq \frac{C}{n}\int_{1/n}^{\pi /2}\frac{d\theta }{\theta}= \frac{C}{n}\log \Big(\frac{\pi}{2}(n+1)\Big)\leq C\leq \frac{C}{|n-m|}.
$$
Hence,
\begin{equation}\label{I23}
I_2(n,m)+I_3(n,m)\leq \frac{C}{|n-m|}.
\end{equation}

By considering estimations \eqref{K1}-\eqref{I23} we get
$$
|K_1(n,m)|\leq C\left(\frac{1}{n}+\frac{1}{|n-m|}\right)\leq \frac{C}{|n-m|},
$$
which, jointly \eqref{5.5} and \eqref{5.6}, leads to
$$
K_{\lambda ,\mu}(n,m)\leq \frac{C}{|n-m|}, \quad \frac{m}{2}\leq n\leq \frac{3m}{2},\;n\not=m,
$$
provided that $m/2\leq n\leq 3m/2$ and $\lambda ,\mu$ are not integers.

When $\lambda $ or $\mu$ is an integer and $m/2\leq n\leq 3n/2$, (\ref{5.1}) can be proved in a similar way by using also (\ref{5.8}).
\end{proof}

\begin{proof}[Proof of (\ref{5.2}) for $K_{\lambda ,\mu }^{\rm e}$] Here we need assume $\mu >1$. Suppose that $\lambda $ and $\mu$ are not integers. In other cases we can proceed in a similar way.

It is sufficient to see that, for every $n,m\in \mathbb{N}$, $m/2\leq n\leq 3m/2$, $n\not=m$,
\begin{equation}\label{98}
|K_{\lambda ,\mu}^{\rm e}(n+1,m)-K_{\lambda ,\mu }^{\rm e}(n,m)|\leq \frac{C}{|n-m|^2}.
\end{equation}
Let $n,m\in \mathbb{N}$ such that $n\not=m$ and $m/2\leq n\leq 3m/2$. 
We can write
\begin{align}\label{descomK}
K_{\lambda ,\mu}^{\rm e}(n+1,m)-K_{\lambda ,\mu }^{\rm e}(n,m)&=2\int_0^1[\varphi _{2n+2}^\mu(x)-\varphi _{2n}^\mu (x)]\varphi _{2m}^\lambda(x)dx\nonumber\\
&\hspace{-4cm}=2\int_0^1\Big[\varphi_{2n+2}^\lambda (x)-\frac{\sqrt{w_\mu (2n+2)}}{\sqrt{w_\mu (2n)}}\varphi_{2n}^\lambda (x)\Big]\varphi _{2m}^\lambda (x)dx\nonumber\\
& \hspace{-4cm}\quad +2\Big[\frac{\sqrt{w_\mu (2n+2)}}{\sqrt{w_\mu (2n)}}-1\Big]\int_0^1\varphi _{2n}^\mu(x)\varphi _{2m}^\lambda (x)dx\nonumber\\
&\hspace{-4cm}=:H_1(n,m)+H_2(n,m).   
\end{align}
We have that
\begin{align*}
\left|\sqrt{\frac{w_\mu (2n+2)}{w_\mu (2n)}}-1\right|&=\left|\frac{w_\mu (2n+2)}{w_\mu (2n)}-1\right|\left|\sqrt{\frac{w_\mu (2n+2)}{w_\mu (2n)}}+1\right|^{-1}\leq \left|\frac{w_\mu (2n+2)}{w_\mu (2n)}-1\right|\\
&=\left|\frac{(2\mu +2n+1)(2\mu +2n)(2n+2+\mu)}{(2n+2)(2n+1)(2n+\mu )}-1\right|\leq \frac{C}{n}.
\end{align*}
Hence, by (\ref{5.1}) and Remark \ref{conditions} we deduce that
\begin{equation}\label{5.17}
|H_2(n,m)|\leq \frac{C}{n}|K_{\lambda ,\mu }(2n,2m)|\leq \frac{C}{n|n-m|}.
\end{equation}
On the other hand, according to \cite[(4.7.29)]{Sz}, for every $x\in (-1,1)$,
$$
\mathcal{P}_{2n+2}^\mu (x)-\mathcal{P}_{2n}^\mu (x)=\left(\frac{(2n+2)(2n+1)}{(2\mu +2n+1)(2\mu +2n)}-1\right)\mathcal{P}_{2n}^\mu (x)+\frac{2(2\mu -1)(2n+1+\mu)}{(2\mu +2n+1)(2\mu +2n)}\mathcal{P}_{2n+2}^{\mu -1}(x).
$$
Then, we can decompose $H_1(n,m)$ as follows:
\begin{align}\label{H1}
H_1(n,m)&=2\sqrt{w_\mu (2n+2)w_\lambda (2m)}\int_0^{\pi /2}[\mathcal{P}_{2n+2}^\mu (\cos \theta )-\mathcal{P}_{2n}^\mu (\cos \theta)]\mathcal{P}_{2m}^\lambda (\cos \theta )(\sin \theta )^{\lambda +\mu}d\theta\nonumber\\
&=2\sqrt{w_\mu (2n+2)w_\lambda (2m)}\nonumber\\
&\quad\times \left[\left(\frac{(2n+2)(2n+1)}{(2\mu +2n+1)(2\mu +2n)}-1\right)\int_0^{\pi /2}\mathcal{P}_{2n}^\mu (\cos\theta)\mathcal{P}_{2m}^\lambda (\cos \theta )(\sin \theta )^{\lambda +\mu}d\theta\right.\nonumber\\
&\quad +2\left.\frac{(2\mu -1)(2n+1+\mu)}{(2\mu +2n+1)(2\mu +2n)}\int_0^{\pi /2}\mathcal{P}_{2n+2}^{\mu -1}(\cos\theta)\mathcal{P}_{2m}^\lambda (\cos \theta )(\sin \theta )^{\lambda +\mu}d\theta\right]\nonumber\\
&=:H_{1,1}(n,m)+H_{1,2}(n,m).
\end{align}

It is not hard to see that
\begin{equation}\label{A3}
\mbox{ (a) }\left|\frac{(2n-2)(2n+1)}{(2\mu +2n+1)(2\mu +2n)}-1\right|\leq \frac{C}{n},\quad \mbox{ and }\quad \mbox{ (b) }\left|\frac{(2\mu -1)(2n+2+\mu)}{(2\mu +2n+1)(2\mu +2n)}\right|\leq \frac{C}{n}.
\end{equation}
Then, by using (\ref{3.8}), (\ref{5.1}) and Remark \ref{conditions} we obtain that
\begin{equation}\label{H11}
|H_{1,1}(n,m)|\leq \frac{C}{n}\frac{\sqrt{w_\mu (2n+2)}}{\sqrt{w_\mu (2n)}}|K_{\lambda ,\mu }(2n,2m)|\leq \frac{C}{n|n-m|}.
\end{equation}
Also, from (\ref{3.8}), (\ref{A3}) (b) and since $|\mathcal{P}_k^\gamma (x)|\leq 1$, $\gamma >0$, $k\in \mathbb{N}$, $x\in (-1,1)$, we have that
\begin{align}\label{H12}
|H_{1,2}(n,m)|&\leq Cn^{\lambda +\mu -1}\left|\left(\int_0^{1/n}+\int_{1/n}^{\pi /2}\right)\mathcal{P}_{2n+2}^{\mu -1}(\cos\theta)\mathcal{P}_{2m}^\lambda (\cos \theta )(\sin \theta )^{\lambda +\mu}d\theta\right|\nonumber\\
&\leq Cn^{\lambda+\mu  -1}\left(\int_0^{1/n}\theta ^{\lambda +\mu}d\theta +\left|\int_{1/n}^{\pi /2}\mathcal{P}_{2n+2}^{\mu -1}(\cos\theta)\mathcal{P}_{2m}^\lambda (\cos \theta )(\sin \theta )^{\lambda +\mu}d\theta\right|\right)\nonumber\\
&\leq C\left(\frac{1}{n^2}+n^{ \lambda+\mu -1}\left|\int_{1/n}^{\pi /2}\mathcal{P}_{2n+2}^{\mu -1}(\cos\theta)\mathcal{P}_{2m}^\lambda (\cos \theta )(\sin \theta )^{\lambda +\mu}d\theta\right|\right).
\end{align}

Lemma \ref{Lem5.1} and \eqref{acotA} with $r=3$ leads to
\begin{align}\label{H12a}
\left|\int_{1/n}^{\pi /2}\mathcal{P}_{2n+2}^{\mu -1}(\cos\theta)\mathcal{P}_{2m}^\lambda (\cos \theta )(\sin \theta )^{\lambda +\mu}d\theta\right|&\leq\left|\int_{1/n}^{\pi /2}A_{2n+2,3}^{\mu -1}(\theta) A_{2m,3}^\lambda (\theta )(\sin \theta)^{\lambda +\mu}d\theta \right|\nonumber\\
&\hspace{-6cm}\quad +\int_{1/n}^{\pi /2}|A_{2n+2,3}^{\mu -1}(\theta)R_{2m,3}^\lambda (\theta )+A_{2m,3}^\lambda (\theta )R_{2n+2,3}^{\mu -1}(\theta )+R_{2n+2,3}^{\mu -1}(\theta )R_{2m,3}^\lambda (\theta)\Big|(\sin \theta)^{\lambda +\mu}d\theta\nonumber\\
&\hspace{-6cm}\leq \left|\int_{1/n}^{\pi /2}A_{2n+2,3}^{\mu -1}(\theta) A_{2m,3}^\lambda (\theta )(\sin \theta)^{\lambda +\mu}d\theta \right| +C\int_{1/n}^\infty \frac{\theta ^{\mu +\lambda}}{(n\theta)^{\lambda +\mu +2}}d\theta \nonumber\\
&\hspace{-6cm}\leq \left|\int_{1/n}^{\pi /2}A_{2n+2,3}^{\mu -1}(\theta) A_{2m,3}^\lambda (\theta )(\sin \theta)^{\lambda +\mu}d\theta \right| +\frac{C}{n^{\lambda +\mu +1}}.
\end{align}

To analyze the integral term we consider, as before, $\alpha _k^\gamma (\theta ):=(k+\gamma )\theta -\gamma \pi /2$, $k\in \mathbb{N}$, $\gamma >0$ and $\theta \in (0,\pi /2)$. Then, for every $\theta \in (0,\pi /2)$,
\begin{align*}
A_{2n+2,3}^{\mu -1}(\theta) A_{2m,3}^\lambda (\theta )&=\frac{(2n+2)!(2m)!}{(\sin \theta )^{\lambda +\mu-1}}\left(b_0^{\mu-1} b_0^\lambda \frac{\cos (\alpha _{2n+2}^{\mu -1}(\theta ))\cos (\alpha _{2m}^\lambda (\theta ))}{\Gamma (2n+2+\mu )\Gamma (2m+\lambda +1)}\right.\\
&\hspace{-2cm}+b_0^{\mu -1}b_1^\lambda \frac{\cos (\alpha _{2n+2}^{\mu -1}(\theta ))\sin(\alpha _{2m}^\lambda (\theta )+\theta)}{\Gamma (2n+2+\mu )\Gamma (2m+\lambda +2)\sin \theta }-b_0^{\mu -1}b_2^\lambda \frac{\cos (\alpha _{2n+2}^{\mu -1}(\theta ))\cos(\alpha _{2m}^\lambda (\theta )+2\theta) }{\Gamma (2n+2+\mu )\Gamma (2m+\lambda +3)(\sin \theta )^2}\\
&\hspace{-2cm}+b_1^{\mu -1}b_0^\lambda \frac{\sin(\alpha _{2n+2}^{\mu -1}(\theta )+\theta)\cos(\alpha _{2m}^\lambda (\theta ))}{\Gamma (2n+3+\mu )\Gamma (2m+\lambda +1)\sin \theta}+b_1^{\mu -1}b_1^\lambda \frac{\sin (\alpha _{2n+2}^{\mu -1}(\theta )+\theta)\sin(\alpha _{2m}^\lambda (\theta )+\theta)}{\Gamma (2n+3+\mu )\Gamma (2m+\lambda +2)(\sin \theta )^2}\\
&\hspace{-2cm}-b_1^{\mu -1}b_2^\lambda \frac{\sin(\alpha _{2n+2}^{\mu -1}(\theta )+\theta)\cos(\alpha _{2m}^\lambda (\theta )+2\theta)}{\Gamma (2n+3+\mu )\Gamma (2m+\lambda +3)(\sin \theta)^3}-b_2^{\mu -1}b_0^\lambda \frac{\cos (\alpha _{2n+2}^{\mu -1}(\theta )+2\theta)\cos(\alpha _{2m}^\lambda (\theta ))}{\Gamma (2n+4+\mu )\Gamma (2m+\lambda +1)(\sin \theta )^2}\\
&\hspace{-2cm}\left.-b_2^{\mu -1}b_1^\lambda \frac{\cos(\alpha _{2n+2}^{\mu -1}(\theta )+2\theta)\sin(\alpha _{2m}^\lambda (\theta )+\theta)}{\Gamma (2n+4+\mu )\Gamma (2m+\lambda +2)(\sin \theta)^3}+b_2^{\mu -1}b_2^\lambda \frac{\cos (\alpha _{2n+2}^{\mu -1}(\theta )+2\theta)\cos(\alpha _{2m}^\lambda (\theta )+2\theta)}{\Gamma (2n+4+\mu )\Gamma (2m+\lambda +3)(\sin \theta )^4}\right).
\end{align*}
Then, 
\begin{align}\label{H12b}
n^{ \lambda+\mu -1}\left|\int_{1/n}^{\pi /2}A_{2n+2,3}^{\mu -1}(\theta) A_{2m,3}^\lambda (\theta )(\sin \theta)^{\lambda +\mu}d\theta \right| &\leq C\left(\left|\int_{1/n}^{\pi /2}\cos (\alpha _{2n+2}^{\mu -1}(\theta ))\cos (\alpha _{2m}^\lambda (\theta ))\sin \theta d\theta\right|\right.\nonumber\\
&\hspace{-6cm}+\frac{1}{n}\left|\int_{1/n}^{\pi /2}\cos (\alpha _{2n+2}^{\mu -1}(\theta ))\sin(\alpha _{2m}^\lambda (\theta )+\theta)d\theta\right|+\frac{1}{n^2}\left|\int_{1/n}^{\pi /2}\frac{\cos (\alpha _{2n+2}^{\mu -1}(\theta ))\cos(\alpha _{2m}^\lambda (\theta)+2\theta)}{\sin \theta }d\theta\right|\nonumber\\
&\hspace{-6cm}+\frac{1}{n}\left|\int_{1/n}^{\pi /2}\sin(\alpha _{2n+2}^{\mu -1}(\theta )+\theta)\cos(\alpha _{2m}^\lambda (\theta ))d\theta\right|+\frac{1}{n^2}\left|\int_{1/n}^{\pi /2}\frac{\sin (\alpha _{2n+2}^{\mu -1}(\theta )+\theta)\sin(\alpha _{2m}^\lambda (\theta )+\theta)}{\sin \theta }d\theta\right|\nonumber\\
&\hspace{-6cm}+\frac{1}{n^3}\left|\int_{1/n}^{\pi /2} \frac{\sin(\alpha _{2n+2}^{\mu -1}(\theta )+\theta)\cos(\alpha _{2m}^\lambda (\theta )+2\theta)}{(\sin \theta )^2}d\theta\right|+\frac{1}{n^2}\left|\int_{1/n}^{\pi /2}\frac{\cos (\alpha _{2n+2}^{\mu -1}(\theta )+2\theta)\cos(\alpha _{2m}^\lambda (\theta ))}{\sin \theta }d\theta\right|\nonumber\\
&
\hspace{-6cm}+\frac{1}{n^3}\left|\int_{1/n}^{\pi /2} \frac{\cos(\alpha _{2n+2}^{\mu -1}(\theta )+2\theta)\sin(\alpha _{2m}^\lambda (\theta )+\theta)}{(\sin \theta )^2}d\theta\right|+\frac{1}{n^4}\left|\int_{1/n}^{\pi /2}\frac{\cos (\alpha _{2n+2}^{\mu -1}(\theta )+2\theta)\cos(\alpha _{2m}^\lambda (\theta )+2\theta)}{(\sin \theta )^3}d\theta\right|\nonumber\\
&\hspace{-6.3cm}=:C\sum_{j=1}^9J_j(n,m).
\end{align}
By proceeding as in \eqref{I1} we get
\begin{equation}\label{J24}
J_2(n,m)+J_4(n,m)\leq \frac{C}{n|n-m|},
\end{equation}
and the method employed to estimate \eqref{I23} allows us also to obtain that
\begin{equation}\label{J357}
J_3(n,m)+J_5(n,m)+J_7(n,m)\leq \frac{C}{n|n-m|}.
\end{equation}
Also, we have that
\begin{equation}\label{J689}
J_6(n,m)+J_8(n,m)+J_9(n,m)\leq C\left(\frac{1}{n^3}\int_{1/n}^\infty \frac{d\theta}{\theta ^2}+\frac{1}{n^4}\int_{1/n}^\infty \frac{d\theta}{\theta ^3}\right)\leq \frac{C}{n^2}.
\end{equation}
Finally, it is not hard to see that
\begin{align*}
\cos (\alpha _{2n+2}^{\mu -1}(\theta ))\cos (\alpha _{2m}^\lambda (\theta ))\sin \theta &=-\sin (\alpha _{2n+1}^\mu (\theta ))\cos (\alpha _{2m}^\lambda (\theta ))\sin \theta\\
&\hspace{-2cm} =\frac{1}{4}\Big(\cos (\alpha _{2n+1}^\mu (\theta )+\alpha _{2m}^\lambda (\theta )+\theta)+\cos (\alpha _{2n+1}^\mu (\theta )-\alpha _{2m}^\lambda (\theta )+\theta)\\
&\hspace{-2cm}\quad -\cos (\alpha _{2n+1}^\mu (\theta )+\alpha _{2m}^\lambda (\theta )-\theta)-\cos(\alpha _{2n+1}^\mu (\theta )-\alpha _{2m}^\lambda (\theta )-\theta)\Big).
\end{align*} 
Then, we can write, when $2m+\lambda \not=2n+2+\mu$ and $2m+\lambda \not=2n+\mu$, 
\begin{align*}
J_1(n,m)&=\frac{1}{4}\left|\left[\frac{\sin(\alpha _{2n+1}^\mu (\theta )+\alpha _{2m}^\lambda (\theta )+\theta)}{2n+2+\mu+2m+\lambda }+\frac{\sin  (\alpha _{2n+1}^\mu (\theta )-\alpha _{2m}^\lambda (\theta )+\theta)}{2n+2+\mu-2m-\lambda }\right.\right.\\
&\quad \left.\left.-\frac{\sin (\alpha _{2n+1}^\mu (\theta )+\alpha _{2m}^\lambda (\theta )-\theta)}{2n+\mu+2m+\lambda}-\frac{\sin (\alpha _{2n+1}^\mu (\theta )-\alpha _{2m}^\lambda (\theta )-\theta)}{2n+\mu -2m-\lambda}\right]_{\theta =1/n}^{\theta =\pi /2}\right|\\
&=:\frac{1}{4}\left|\left[F_{n,m}(\theta )\right]_{\theta =1/n}^{\theta =\pi /2}\right|.
\end{align*}
We observe that $F_{n,m}(\pi/2)=0$ and also, we can write
\begin{align*}
F_{n,m}(\theta )&=\sin(\alpha _{2n+1}^\mu (\theta )+\alpha _{2m}^\lambda (\theta )+\theta)\left[\frac{1}{2n+2+\mu+2m+\lambda }-\frac{1}{2n+\mu+2m+\lambda }\right]\\
&+\frac{1}{2n+\mu+2m+\lambda }\Big[\sin(\alpha _{2n+1}^\mu (\theta )+\alpha _{2m}^\lambda (\theta )+\theta)-\sin (\alpha _{2n+1}^\mu (\theta )+\alpha _{2m}^\lambda (\theta )-\theta)\Big]\\
&+\sin(\alpha _{2n+1}^\mu (\theta )-\alpha _{2m}^\lambda (\theta )+\theta)\left[\frac{1}{2n+2+\mu -2m-\lambda}-\frac{1}{2n+\mu -2m-\lambda}\right]\\
&+\frac{1}{2n+\mu -2m-\lambda}\Big[\sin(\alpha _{2n+1}^\mu (\theta )-\alpha _{2m}^\lambda (\theta )+\theta)-\sin (\alpha _{2n+1}^\mu (\theta )-\alpha _{2m}^\lambda (\theta )-\theta)\Big].
\end{align*}
Hence, when $2m+\lambda \not=2n+2+\mu$ and $2m+\lambda \not=2n+\mu$, by using mean value theorem we obtain
\begin{align*}
J_1(n,m)&\leq C\left|F_{n,m}\Big(\frac{1}{m}\Big)\right|\\
&\leq C\left(\frac{1}{n^2}+\frac{1}{|2n+\mu+2-2m-\lambda||2n+\mu -2m-\lambda|}+\frac{1}{n|2n+\mu -2m-\lambda|}\right).
\end{align*}
Then, if $|n-m|>\max\{|\mu+2-\lambda |,|\mu -\lambda|\}$, we have
\begin{align*}
J_1(n,m)&\leq C\left(\frac{1}{n^2}+\frac{1}{(2|n-m|-|\mu +2-\lambda|)(2|n-m|-|\mu -\lambda|)}+\frac{1}{n(2|n-m|-|\mu -\lambda|)}\right)\\
&\leq\frac{C}{|n-m|^2}.
\end{align*}
On the other hand, if  $|n-m|\leq \max\{|\mu +2 -\lambda|,|\mu-\lambda |\}$,
$$
J_1(n,m)\leq C\int_{1/n}^{\pi /2}d\theta \leq C\leq \frac{C}{|n-m|^2},
$$
and we can deduce that
\begin{equation}\label{J1b}
J_1(n,m)\leq \frac{C}{|n-m|^2}.
\end{equation}
By joining estimations \eqref{H12}-\eqref{J1b} it follows that
\begin{equation}\label{H12d}
|H_{1,2}(n,m)|\leq \frac{C}{|n-m|^2}.
\end{equation}

Then, by taking into account \eqref{descomK}-\eqref{H1}, \eqref{H11} and \eqref{H12d} we conclude property \eqref{98}. The same procedure allows us to show (\ref{5.2}) for $K^{\rm o}_{\lambda,\mu}$.
\end{proof}

\begin{proof}[Proof of (\ref{5.3}) for $K^{\rm s}_{\lambda,\mu}$] We observe that
$$
K_{\lambda ,\mu }(n,m)=K_{\mu ,\lambda }(m,n),\quad n,m\in \mathbb{N}.
$$
Then, since $\lambda >1$, from \eqref{5.2} we obtain \eqref{5.3}. 
\end{proof}

Suppose that $(\mu -1)/2<\lambda <\mu$. According to Theorem \ref{Th2.1}, the operators $\mathcal{T}^e_{\lambda,\mu}$ and $\mathcal{T}^{\rm o}_{\lambda,\mu}$ are bounded

(a) from $\ell^1(\N,\omega)$ into $\ell^{1,\infty}(\N,\omega)$, for every $\omega \in A_1(\N)$;

(b) from $\ell^p(\N,\omega)$ into itself, for every $1<p<\infty$ and $\omega \in A_p(\N)$.
\vspace*{0.2cm}

Let $1\leq p< \infty$ and $\omega\in A_p(\N)$. We denote by $\mathbb{T}_{\lambda ,\mu }^e$ and $\mathbb{T}_{\lambda ,\mu} ^{\rm o}$ the bounded extensions to $\ell ^p(\mathbb{N}, w)$ of $\mathcal{T}_{\lambda ,\mu }^e$ and $\mathcal{T}_{\lambda ,\mu} ^{\rm o}$, respectively, and define $\widetilde w(k):=\omega(2k)$ and $\widehat\omega(k):=\omega(2k+1)$, $k\in \N$. Then, $\widetilde\omega,\widehat\omega\in A_p(\N)$. We consider the operator  $\mathbb{T}_{\lambda ,\mu }$ as follows:
$$\mathbb{T}_{\lambda ,\mu }(f)(n):=\left\{\begin{array}{ll}
			\mathbb{T}_{\lambda ,\mu }^e(\widetilde{f})(m),&\mbox{ if }n=2m,\\
			\mathbb{T}_{\lambda ,\mu }^{\rm o}(\widehat{f})(m),&\mbox{ if }n=2m+1,
			\end{array}
\right.,\quad f\in \ell ^p(\mathbb{N},w).
$$
Note that $\mathbb{T}_{\lambda ,\mu }(g)=\mathcal{T}_{\lambda ,\mu }(g)$, $g\in \ell ^2(\mathbb{N})$.

We have that, if $1<p<\infty$ and $f\in \ell^p(\N,\omega)$,
\begin{align*}
\sum_{n=0}^\infty \left|\mathbb{T}_{\lambda,\mu}(f)(n)\right|^p\omega(n) &=\sum_{n=0}^\infty \left|\mathbb{T}_{\lambda,\mu}(f)(2n)\right|^p\omega(2n)+\sum_{n=0}^\infty \left|\mathbb{T}_{\lambda,\mu}(f)(2n+1)\right|^p\omega(2n+1)\\
&= \sum_{n=0}^\infty \left|\mathbb{T}_{\lambda,\mu}^e(\widetilde f)(n)\right|^p\widetilde{\omega}(n) + \sum_{n=0}^\infty \left|\mathbb{T}_{\lambda,\mu}^{\rm o}(\widehat f)(n)\right|^p\widehat{\omega}(n)\\
&\leq C\left( \sum_{n=0}^\infty \left|\widetilde{f}(n)\right|^p\omega(2n)+\sum_{n=0}^\infty \left|\widehat {f}(n)\right|^p\omega(2n+1)\right)=C \sum_{n=0}^\infty|f(n)|^p\omega(n).
\end{align*}
Hence, $\mathbb{T}_{\lambda,\mu}$ is bounded from $\ell^p(\N,\omega)$ into itself, when $1<p<\infty$. In a similar way we can see that $\mathbb{T}_{\lambda,\mu}$ is bounded from $\ell^1(\N,\omega)$ into $\ell^{1,\infty}(\N,\omega)$.

Moreover, for every $f\in \ell ^p(\mathbb{N}, w)$,
$$
\mathbb{T}_{\lambda , \mu }(f)(n)=\lim_{k\rightarrow \infty }\sum_{m=0}^kK_{\lambda ,\mu }(n,m)f(m),
$$
where the convergence of the series can be understood in $\ell ^p(\mathbb{N},w)$, when $1<p<\infty$; in $\ell ^{1,\infty }(\mathbb{N},w)$, and also, pointwisely. This property justifies to write $\mathcal{T}_{\lambda ,\mu }=\mathbb{T}_{\lambda ,\mu }$ in $\ell ^p(\mathbb{N},w)$, $1\leq p<\infty$.

Suppose now that $1<\lambda<\mu$ and consider $r\in \N$ such that $\mu\in (\lambda+r,\lambda+r+1]$. We have that
$$
\mathcal{T}_{\lambda,\mu}=\mathcal{T}_{\lambda+r,\mu}\circ \mathcal{T}_{\lambda+r-1,\lambda +r}\circ \ldots \circ \mathcal{T}_{\lambda+1,\lambda+2}\circ \mathcal{T}_{\lambda,\lambda+1},\;\;\mbox{on}\;\ell^2(\N).
$$
Hence, if $1<p<\infty$ and $\omega \in A_p(\N)$, the operator $\mathcal{T}_{\lambda,\mu}$ is bounded from $\ell^p(\N,\omega)$ into itself. 

According to Plancherel equality we can write, for every $f,g\in \ell^2(\N)$,
$$
\sum_{n\in \N}(\mathcal{T}_{\mu,\lambda}f)(n)g(n)=  \int_{-1}^1\mathcal{F}_\mu(f)(x)\mathcal{F}_\lambda(g)(x)dx=\sum_{n\in \N}f(n)\mathcal{T}_{\lambda,\mu}(g)(n).
$$
Then, $\mathcal{T}_{\mu,\lambda}$ is bounded from $\ell^p(\N,\omega)$ into itself, for every $1<p<\infty$ and $\omega \in A_p(\N)$.

Let now $\lambda,\mu >1$, $1<p<\infty$ and $\omega \in A_p(\N)$. Since $\mathcal{T}_{\mu,\lambda}\mathcal{T}_{\lambda,\mu} f=f$, $f\in \ell^2(\N)$, we get
$$
\|f\|_{\ell^p(\N,\omega)} \leq C\|\mathcal{T}_{\lambda,\mu}f\|_{\ell^p(\N,\omega)},\;\;f\in \ell^p(\N,\omega),
$$
because $\mathcal{T}_{\mu,\lambda}$ and $\mathcal{T}_{\lambda,\mu}$ are bounded operators from $\ell^p(\N,\omega)$ into itself.

Thus the proof of this theorem is finished.

\def\cprime{$'$}


\end{document}